\titleformat{\subsection}[runin]
{\bfseries} {\thesubsection{.}}{0.15cm}{}[.]
\titleformat{\subsubsection}[runin]
{\em}{\thesubsubsection{.}}{0.15cm}{}[.]
\newtheorem{theorem}{Theorem}[section]
\newtheorem{lemma}[theorem]{Lemma}
\newtheorem{corollary}[theorem]{Corollary}
\theoremstyle{definition}
\newtheorem{definition}[theorem]{Definition}
\newtheorem{remark}[theorem]{Remark}
\newtheorem{example}[theorem]{Example}
\numberwithin{equation}{section}
\numberwithin{figure}{section}
\newcommand\Acal{\mathcal{A}}
\newcommand\Bcal{\mathcal{B}}
\newcommand\Pcal{\mathcal{P}}
\newcommand\Ascr{\mathscr{A}}
\newcommand\Cscr{\mathscr{C}}
\newcommand\Fscr{\mathscr{F}}
\newcommand\Oscr{\mathscr{O}}
\renewcommand\c{\mathbb{C}}
\renewcommand\r{\mathbb{R}}
\newcommand\z{\mathbb{Z}}
\newcommand\B{\mathbb{B}}
\newcommand\C{\mathbb{C}}
\newcommand\D{\mathbb{D}}
\newcommand\N{\mathbb{N}}
\newcommand\R{\mathbb{R}}
\renewcommand\S{\mathbb{S}}
\newcommand\Z{\mathbb{Z}}
\newcommand\igot{\mathfrak{i}}
\renewcommand\igot{\mathfrak{i}}
\renewcommand\imath{\igot}
\newcommand\Agot{\mathfrak{A}}
\newcommand\Igot{\mathfrak{I}}
\newcommand\Mgot{\mathfrak{M}}
\newcommand\Ngot{\mathfrak{N}}
\newcommand\Ogot{\mathfrak{O}}
\renewcommand\span{\mathrm{span}}
\newcommand\wt{\widetilde}
\newcommand\di{\partial}
\newcommand\dibar{\overline\partial}
\newcommand\hra{\hookrightarrow}
\newcommand\longhookrightarrow{\ensuremath{\lhook\joinrel\relbar\joinrel\rightarrow}}
\newcommand\lra{\longrightarrow}
\newcommand\Flux{\mathrm{Flux}}
\newcommand\GM{\mathfrak{GM}}
\newcommand\Co{\mathrm{Co}}
\begin{document}

\fancyhead[LO]{The parametric h-principle for minimal surfaces in $\R^n$ and null curves in $\C^n$}
\fancyhead[RE]{Franc Forstneri\v c and Finnur L\'arusson}
\fancyhead[RO,LE]{\thepage}

\thispagestyle{empty}

\vspace*{7mm}
\begin{center}
{\bf \LARGE The parametric h-principle \\ \vspace{2mm} for minimal surfaces in $\R^n$ and null curves in $\C^n$}
\vspace*{5mm}

{\large\bf Franc Forstneri\v c and Finnur L\'arusson}
\end{center}

\vspace*{7mm}

\begin{quote}
{\small
\noindent {\bf Abstract}\hspace*{0.1cm}
Let $M$ be an open Riemann surface.  It was proved by Alarc\'on and Forstneri\v c \cite{AlarconForstnericCrelle}
that every conformal minimal immersion $M\to\R^3$ is isotopic to the real part of a holomorphic null curve $M\to\C^3$.
In this paper, we prove the following much stronger result in this direction: for any $n\ge 3$, the inclusion $\iota:\Re \Ngot_*(M,\C^n) \hra \Mgot_*(M,\R^n)$ of the space of real parts of nonflat null holomorphic immersions $M\to\C^n$ into the space of nonflat conformal minimal immersions $M\to \R^n$ satisfies the parametric h-principle with approximation (see Theorem \ref{th:PHP1}).  In particular, $\iota$ is a weak homotopy equivalence (see Theorem \ref{th:WHE1}).  We prove analogous results for several other related maps (see Theorems \ref{th:WHE2} and \ref{th:WHE-A} and Corollary \ref{cor:NC}),
%
%
and we describe the rough shape of the space of all holomorphic immersions $M\to\C^n$
(Theorem \ref{th:immersions}).  
%
%
For an open Riemann surface $M$ of finite topological type, we obtain optimal results by
showing that $\iota$ and several related maps are inclusions of strong deformation retracts; in particular, they are homotopy equivalences (see Corollary \ref{cor:strong} and Remark \ref{rem:general}).

\vspace*{0.1cm}
\noindent{\bf Keywords}\hspace*{0.1cm} Riemann surface, minimal surface, null curve, h-principle, Oka manifold, absolute neighborhood retract. 

\vspace*{0.1cm}

\noindent{\bf MSC (2010)}\hspace*{0.1cm} Primary 53A10.  Secondary 30F99, 32E30, 32H02, 54C55, 55M15.

\vspace*{0.1cm}

\noindent{\bf Date}\hspace*{0.1cm} 3 February 2016; this version 31 October 2016}
\end{quote}

%
%
%
%
\section{Introduction}\label{sec:intro}

This paper brings together four diverse topics from differential geometry, holomorphic geometry, and topology; namely the theory of minimal surfaces, Oka theory, convex integration theory, and the theory of absolute neighborhood retracts.  
Our goal is to determine the rough shape of several spaces of maps of great geometric interest.  
It turns out that they all have the same rough shape.  

We start by recalling some basic definitions and establishing notation.  Let $M$ be an open Riemann surface, 
and let $n\ge 3$ be an integer. 
It is a well known elementary observation (see e.g.\ Osserman \cite{Ossermanbook}) that a smooth immersion 
$u=(u_1,\ldots,u_n)\colon M\to \R^n$ is {\em conformal} (i.e., angle preserving) 
if and only if its $(1,0)$-differential $\di u=(\di u_1,\ldots, \di u_n)$ 
satisfies the following {\em nullity condition}:
\begin{equation}\label{eq:nullity}
	(\di u_1)^2 + (\di u_2)^2 + \cdots + (\di u_n)^2 = 0.
\end{equation}
Furthermore, a conformal immersion $u\colon M\to \R^n$ is {\em minimal}
(i.e., it parameterizes a minimal surface in $\R^n$) if and only if it is harmonic, $\triangle u =0$, and 
this holds if and only if $\di u$ is a holomorphic $(1,0)$-form. 
Such an immersion $u$ is said to be {\em nonflat} if, for each connected component $M'$ of $M$, 
the image $u(M')\subset\R^n$ is not contained in any affine $2$-plane. 
We denote by $\Mgot(M,\R^n)$ the space of all conformal minimal immersions $M\to\R^n$
with the compact-open topology, and by 
\[
	\Mgot_*(M,\R^n) \subset \Mgot(M,\R^n)
\] 
the subspace of $\Mgot(M,\R^n)$ consisting of all nonflat conformal minimal immersions. 

A holomorphic immersion $F\colon M\to \C^n$ $(n\ge 3)$ is a  {\em null curve}
if the differential $dF=\di F=(dF_1,\ldots, dF_1)$ satisfies the nullity condition \eqref{eq:nullity}.
Such an immersion $F$ is said to be {\em nonflat} if for each connected component $M'$ of $M$,
the image $F(M')$ is not contained in an affine complex line.
Since $dF=2\di (\Re F)$,  the real part $\Re F$ of a (nonflat) 
null curve is a (nonflat) conformal minimal immersion $M\to \R^n$;
the converse holds if $M$ is simply connected. Hence, we have inclusions 
\[ 
	\Re \Ngot(M,\C^n) \hra \Mgot(M,\R^n), \quad \Re \Ngot_*(M,\C^n) \hra \Mgot_*(M,\R^n),
\]
where $\Ngot_*(M,\C^n) \subset \Ngot(M,\C^n)$ is the space of all (nonflat)
null holomorphic  immersions $M\to\C^n$ with the compact-open topology. Here,  
$\Re \Fscr$ stands for the set of real parts of maps in a space $\Fscr$. 

The following is our first main result. 

%
%
\begin{theorem}\label{th:WHE1}
Let $M$ be an open Riemann surface. For every $n\ge 3$, the inclusion 
\begin{equation}\label{eq:inclusion}
	\Re \Ngot_*(M,\C^n) \longhookrightarrow \Mgot_*(M,\R^n)
\end{equation}
of the space of real parts of nonflat null holomorphic immersions $M\to\C^n$ into the space 
of nonflat conformal minimal immersions $M\to \R^n$ is a weak homotopy equivalence.
\end{theorem}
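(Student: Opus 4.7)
The plan is to deduce the weak homotopy equivalence from the parametric h-principle with approximation stated as Theorem \ref{th:PHP1}. Recall that an inclusion $\iota \colon X \hookrightarrow Y$ is a weak homotopy equivalence precisely when every continuous family $\{y_t\}_{t \in D^k}$ in $Y$ with $y_t \in \iota(X)$ for $t \in \di D^k$ can be deformed, rel.\ $\di D^k$, to a family in $\iota(X)$. Applying the parametric h-principle for $\iota$ with parameter pair $(P,P_0) = (D^k, \di D^k)$ provides exactly this deformation for each $k \ge 0$.

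To prove the parametric h-principle, I reformulate the problem via $(1,0)$-differentials. A conformal minimal immersion $u \colon M \to \R^n$ is equivalent to a holomorphic $(1,0)$-form $\phi = \di u$ on $M$ taking values in the punctured null quadric
\[
	\Acal_* = \{ \zero \neq z \in \C^n : z_1^2 + \cdots + z_n^2 = 0 \},
\]
subject to the real period condition $\int_\gamma \Re\phi = 0$ for every $\gamma \in H_1(M,\Z)$. The immersion $u$ is the real part of a null curve $F \colon M \to \C^n$ if and only if the full complex periods $\int_\gamma \phi$ themselves vanish, while nonflatness amounts to saying that the image of $\phi$ is not contained in a complex line of $\Acal_*$. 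Under this dictionary, \eqref{eq:inclusion} becomes the inclusion of the subspace of $\Acal_*$-valued holomorphic $(1,0)$-forms with vanishing complex periods into that with vanishing real periods, constrained by the open conditions of nowhere vanishing and nonflatness.

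The parametric h-principle is then proved by induction along a Runge exhaustion $K_1 \Subset K_2 \Subset \cdots$ of $M$ by smoothly bounded compact sets with a fixed homology basis. At step $j$, one has a continuous $P$-family of nonflat conformal minimal immersions together with a subfamily of null curves approximating them near $K_j$ for parameters in $P_0 \subset P$, and one must extend the null curves to a neighborhood of $K_{j+1}$ with approximation, preserving the family homotopy class rel.\ $P_0$. The central tool is a \emph{period-dominating spray} of $\Acal_*$-valued holomorphic $1$-forms, whose existence relies on the fact that $\Acal_*$ is a complex homogeneous space of the complex orthogonal group and hence an Oka manifold. The parametric Oka property for maps into $\Acal_*$, combined with the implicit function theorem applied to the period map along such a spray, allows one to deform $\phi_p$ across the handles of $K_{j+1} \setminus K_j$ so as to kill the complex periods while preserving the real periods, the immersion condition, and nonflatness. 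Integrating gives the required family of null curves, and a Mittag-Leffler-type passage to the limit produces a family defined on all of $M$.

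The hard part will be the simultaneous control required at each inductive step: (i) killing complex periods while keeping real periods fixed, (ii) remaining inside the open subset of nonflat $\Acal_*$-valued forms, (iii) preserving the parametric homotopy rel.\ $P_0$, and (iv) achieving approximation sharp enough that the inductive limit converges. Constructing a period-dominating spray with values in $\Acal_*$ that also respects the nonflatness constraint is the most delicate point; once this is in hand, the Oka-theoretic flexibility of $\Acal_*$ supplies the global approximation and gluing needed to combine successive steps into the final family of null curves.
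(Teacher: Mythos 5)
Your reduction of Theorem \ref{th:WHE1} to the parametric h-principle of Theorem \ref{th:PHP1} and the overall architecture of your proof of the latter (the Weierstrass dictionary $u \leftrightarrow f=2\di u/\theta$ with values in $\Agot_*$, vanishing real versus complex periods, induction over a Runge exhaustion, period dominating sprays, the parametric Oka property of $\Agot_*$, passage to the limit) agree with the paper. But there is a genuine gap at the step you describe as ``the parametric Oka property, combined with the implicit function theorem applied to the period map along such a spray, allows one to deform $\phi_p$ across the handles so as to kill the complex periods while preserving the real periods.'' Period dominating sprays and the implicit function theorem only make \emph{small} corrections: after a sufficiently close approximation they restore prescribed period values exactly. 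They do not produce a homotopy $t\mapsto f^t_p$ along which the imaginary period over a cycle moves from its given, arbitrarily large value $\Flux(u_p)(\gamma)$ to $0$ while the real period stays zero; integrals over a fixed cycle are not controlled by the homotopy-theoretic flexibility of the target, so Oka theory alone cannot achieve condition (4) of Theorem \ref{th:PHP1}. This is precisely the issue in the critical case of the induction, when a handle $E$ closes inside $K_j$ to a new homology class $C$, and it is already the crux of the basic nonparametric theorem of Alarc\'on and Forstneri\v c.

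The paper supplies the missing mechanism by a parametric version of Gromov's one-dimensional convex integration lemma (Lemma \ref{lem:CI}), which rests on the fact that the convex hull of the null quadric $\Agot$ is all of $\C^n$: over a subarc of the new handle one constructs a homotopy of paths in $\Agot_*$, fixed for $(p,t)\in (P\times\{0\})\cup(Q\times[0,1])$ and near the endpoints, whose integrals approximate any prescribed continuous family $\alpha^t_p$ --- in particular with real part approximately $0$ for all $t$ and full integral approximately $0$ at $t=1$. Only then does a period dominating spray on a second subarc (this is where nonflatness of $u_p$ is actually used) correct the remaining small error exactly, and the Mergelyan-type approximation theorem for generalized conformal minimal immersions on the admissible set $K\cup E$ converts the resulting data into genuine conformal minimal immersions on a neighborhood, reducing matters to the noncritical case. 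Without this convex-integration ingredient (or an equivalent device for prescribing path integrals), your inductive step cannot make the flux vanish at $t=1$ over the newly created cycles, so the proposal as it stands does not close.
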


This means that the inclusion induces a bijection of path components and an isomorphism of homotopy groups
\[
	\pi_k \left(\Re \Ngot_*(M,\C^n)\right) \stackrel{\cong} {\longrightarrow} \pi_k \left(\Mgot_*(M,\R^n)\right)
\]
for every $k\geq 1$ and every choice of base point.

Theorem \ref{th:WHE1} is an immediate consequence of Theorem \ref{th:PHP1} which 
shows that the inclusion \eqref{eq:inclusion}  enjoys the parametric h-principle with approximation.
Theorem \ref{th:PHP1} is an analogue of the {\em parametric Oka property with approximation}
for the inclusion $\Oscr(X,Y)\hookrightarrow \Cscr(X,Y)$, where $X$ is a Stein manifold and $Y$
is an Oka manifold (see \cite[Theorem 5.4.4]{Forstneric2011}). This relationship 
plays an important role in the proof of Theorem \ref{th:WHE1} which relies on techniques
of modern Oka theory, combined with Gromov's convex integration theory.

The basic case of Theorem \ref{th:WHE1}, with $P=\{p\}$ a singleton and $Q=\emptyset$ (see the notation in Theorem \ref{th:PHP1}), was proved
by Alarc\'on and Forstneri\v c in \cite[Theorem 1.1]{AlarconForstnericCrelle}. In this case,
the result says that every nonflat conformal minimal immersion $M\to\R^n$ is isotopic
through a family of nonflat conformal minimal immersions $M\to\R^n$ to the real part of a holomorphic 
null curve $M\to\C^n$. In fact, Theorem \ref{th:WHE1} gives an affirmative answer to the second question 
in \cite[Problem 8.1]{AlarconForstnericCrelle}.

In this paper, we shall systematically use the term {\em isotopy} instead of the more standard 
{\em regular homotopy} when speaking of smooth 1-parameter families of immersions. 

We do not know whether the inclusion $\Re \Ngot(M,\C^n) \hra \Mgot(M,\R^n)$
(i.e., with the flat conformal minimal immersions included) is also a weak homotopy 
equivalence. The main problem is that flat conformal minimal immersions (and flat null curves)
are critical points of the period map (see \eqref{eq:period}), and hence they may be singular points 
of the space $\Mgot(M,\R^n)$.  This phenomenon was already observed in the papers \cite{AlarconForstnericCrelle,AlarconForstneric2014IM}. 

Let us recall the classical {\em Weierstrass representation} of conformal minimal
immersions and null curves; see e.g.\ \cite{Ossermanbook}.
Choose a nowhere vanishing holomorphic $1$-form $\theta$ on $M$.
(Such a form exists by the Oka-Grauert principle, cf.\ \cite[Theorem 5.3.1]{Forstneric2011}.)
Given a conformal minimal immersion $u\colon M\to \R^n$, the map
\[
	f=2\di u/\theta =(f_1,\ldots, f_n)\colon M\to \C^n
\] 
is holomorphic since $u$ is harmonic, and the nullity condition
\eqref{eq:nullity} shows that it has range in the punctured  {\em null quadric}
$\Agot_*=\Agot\setminus\{0\}$, where 
%
%
\begin{equation}\label{eq:Agot}
	\Agot = \Agot^{n-1} = \{(z_1,\ldots,z_n) \in\c^n \colon z_1^2+z_2^2+\cdots + z_n^2=0\}.
\end{equation}
Clearly, the choice of $\theta$ is immaterial since $\Agot$ is a cone.
Conversely, a holomorphic map $f\colon M\to \Agot_*$ such that the $(1,0)$-form $f\theta$
has vanishing real periods (i.e., $\int_\gamma \Re(f\theta)=0$ for every closed curve $\gamma$ in $M$)
determines a conformal minimal immersion $u\colon M\to\R^n$ defined by 
\[
	u(x)= \int^x \Re(f\theta),\quad x\in M. 
\]
Similarly, if $f\theta$ has vanishing complex periods, then it integrates to a holomorphic null curve $F(x)=\int^x f\theta$.

%
%
\begin{theorem}\label{th:WHE2}
Let $M$ be an open Riemann surface and let $n\ge 3$.  The maps
\begin{equation}\label{eq:WHE2}
	\Mgot_*(M,\R^n)  \lra  \Oscr(M,\Agot_*), \qquad 
	\Ngot_*(M,\C^n)   \lra  \Oscr(M,\Agot_*),
\end{equation}
given by $u\mapsto 2\di u/\theta$ and $F\mapsto \di F/\theta$, respectively, are weak homotopy equivalences. 
\end{theorem}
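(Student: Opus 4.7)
By Theorem~\ref{th:WHE1}, the inclusion $\Re\Ngot_*(M,\C^n)\hookrightarrow\Mgot_*(M,\R^n)$ is a weak homotopy equivalence. The map $\Ngot_*(M,\C^n)\to\Re\Ngot_*(M,\C^n)$, $F\mapsto\Re F$, is a principal bundle with contractible fibres (the purely imaginary constants added to $F$ on each component of $M$: if $\Re F_1=\Re F_2$, then $F_1-F_2$ is holomorphic and purely imaginary, hence locally constant), so it too is a weak equivalence, and fixing $\Im F(p_0)=0$ at a basepoint in each component provides a global continuous section. Composing, $F\mapsto\Re F$ yields a weak equivalence $\Ngot_*(M,\C^n)\to\Mgot_*(M,\R^n)$. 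Since $\partial\bar F=0$ for holomorphic $F$, we have $dF=2\partial(\Re F)$, so the two maps in \eqref{eq:WHE2}, together with $F\mapsto\Re F$ on the left and the identity on $\Oscr(M,\Agot_*)$ on the right, form a commutative square. By the two-out-of-three property for weak equivalences, it suffices to prove that $\Phi\colon\Ngot_*(M,\C^n)\to\Oscr(M,\Agot_*)$, $F\mapsto dF/\theta$, is a weak homotopy equivalence.

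\textbf{Period map and principal bundle structure.} Fix a basis $\{\gamma_i\}$ of $H_1(M,\Z)$ and consider the complex period map
\[
\Pcal\colon\Oscr(M,\Agot_*)\to V:=\prod_i\C^n,\qquad \Pcal(f)=\Bigl(\int_{\gamma_i}f\theta\Bigr)_i.
\]
By the Weierstrass representation recalled in the introduction, the image of $\Phi$ is the subspace $S\subset\Pcal^{-1}(0)$ of nonflat maps, and $\Phi\colon\Ngot_*(M,\C^n)\to S$ is a principal bundle whose fibres are the holomorphic primitives of $f\theta$---a torsor under $\C^n$ on each component of $M$---hence a weak equivalence. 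Therefore it remains to show that both inclusions in $S\hookrightarrow\Pcal^{-1}(0)\hookrightarrow\Oscr(M,\Agot_*)$ are weak homotopy equivalences.

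\textbf{Parametric h-principle for the period map.} The central step is that $\Pcal$ is a Serre fibration: given a compact parameter space $P$, a path $t\mapsto c_t\colon P\to V$, and a lift $f_0\colon P\to\Oscr(M,\Agot_*)$ with $\Pcal\circ f_0=c_0$, one constructs a homotopy $\{f_t\}$ in $\Oscr(M,\Agot_*)$ with $\Pcal\circ f_t=c_t$. This is carried out using dominating period-adjusting holomorphic sprays valued in $\Agot_*$, Mergelyan-type approximation on open Riemann surfaces, and the parametric Oka principle for $\Agot_*$---the same toolkit that underlies Theorem~\ref{th:PHP1}. Since $V$ is contractible, it follows that $\Pcal^{-1}(0)\hookrightarrow\Oscr(M,\Agot_*)$ is a weak equivalence. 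The flat locus $\Pcal^{-1}(0)\setminus S$ is contained in the union over complex lines $\ell\subset\Agot$ through the origin of the subspaces $\Oscr(M,\ell\setminus\{0\})\subset\Oscr(M,\Agot_*)$; for $n\ge 3$ this is a closed subset of infinite codimension, and deploying the same sprays with zero net period contribution lets one perturb any compact family in $\Pcal^{-1}(0)$ off it. Hence $S\hookrightarrow\Pcal^{-1}(0)$ is also a weak equivalence, and composing the chain of weak equivalences proves the theorem. The principal obstacle is the Serre-fibration property of $\Pcal$: one needs sufficient parametric control of holomorphic deformations into the singular cone $\Agot_*$ to realise arbitrary prescribed period changes, which is precisely what the h-principle with approximation underlying Theorem~\ref{th:PHP1} supplies.
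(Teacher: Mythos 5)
Your first reduction (via the commuting square relating $u\mapsto 2\di u/\theta$, $F\mapsto dF/\theta$, $F\mapsto\Re F$ and Theorem \ref{th:WHE1}) is exactly the paper's reduction, and your observation that $\Ngot_*(M,\C^n)\to S$, $F\mapsto dF/\theta$, is a trivial $\C^n$-bundle onto the nonflat vanishing-period maps is fine. The gap is in your central step. You factor the remaining problem as $S\hookrightarrow\Pcal^{-1}(0)\hookrightarrow\Oscr(M,\Agot_*)$ and assert that the period map $\Pcal$ is a Serre fibration on all of $\Oscr(M,\Agot_*)$, provable "with the same toolkit" of period-dominating sprays. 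But that toolkit requires the core maps to be nondegenerate (nonflat): the period-dominating sprays of \cite[Lemma 5.1]{AlarconForstneric2014IM} and their parametric versions are only available when no map in the family sends a component of $M$ into a ray of $\Agot$. At flat maps the tangent spaces $T_{f(x)}\Agot$ do not span $\C^n$, and, as the paper itself points out in the introduction, flat null curves and flat conformal minimal immersions are \emph{critical points} of the period map --- this is precisely why the question whether $\Re\Ngot(M,\C^n)\hookrightarrow\Mgot(M,\R^n)$ (flats included) is a weak equivalence is left open. So the homotopy lifting property of $\Pcal$ for families passing through, or starting at, flat maps is not a routine consequence of the available machinery; it is an unsubstantiated claim at least as strong as what is known.

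The same obstruction undermines your second step, that $S\hookrightarrow\Pcal^{-1}(0)$ is a weak equivalence: your intermediate space $\Pcal^{-1}(0)$ contains flat maps with vanishing periods (e.g.\ $f=hv$ with $v$ a null vector and $h\theta$ exact, which exist by Gunning--Narasimhan), and to push a compact family in $\Pcal^{-1}(0)$ off this flat locus you must keep \emph{all} periods zero along the homotopy. A generic small perturbation achieves nonflatness but destroys the period condition, and the correction step that restores it requires period domination --- unavailable exactly near the flat maps you are trying to leave. The paper avoids both difficulties by ordering the steps differently: it never factors through $\Pcal^{-1}(0)$ and never needs a fibration statement for $\Pcal$. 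Instead it first passes to the nondegenerate locus, showing $\Oscr_*(M,\Agot_*)\hookrightarrow\Oscr(M,\Agot_*)$ is a weak equivalence by a general position argument with \emph{no} period constraints (Theorem \ref{th:PHP3}, Corollary \ref{cor:WHEforOstar}), and only then imposes the period conditions entirely within the nondegenerate locus via the parametric h-principle with approximation (Theorem \ref{th:PHP2}), which directly yields surjectivity and injectivity of the maps on $\pi_k$ in Theorem \ref{th:WHE-A}. To repair your argument you would either have to prove the fibration property of $\Pcal$ across the flat locus (an open problem in spirit) or reorganize it along the paper's lines.
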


We use the standard notation $\Oscr(M,Y)$ for the space of all holomorphic maps from $M$ to a complex manifold $Y$
with the compact-open topology.

Before commenting on the proof of Theorem \ref{th:WHE2}, let us indicate a corollary.

Recall  (cf.\ \cite[Example 4.4]{AlarconForstneric2014IM})
that the punctured null quadric $\Agot_*$ is elliptic in the sense of Gromov \cite{Gromov1989}, and hence
an {\em Oka manifold} (see \cite[Corollary 5.5.12]{Forstneric2011}). This is also seen by observing 
that $\Agot_*$ is a homogeneous space of the complex Lie group $\C^*\times O(n,\C)$, where 
\[
	O(n,\C)=\{A\in GL(n,\C): AA^t=I\}
\] 
is the orthogonal group over $\C$. Recall that every complex homogeneous manifold is an Oka manifold 
by Grauert's theorem \cite{Grauert1957}; see also  \cite[Proposition  5.5.1]{Forstneric2011}.
Hence, it follows from Grauert's Oka principle (see \cite{GrauertMA1958}) that the inclusion 
\[
	\Oscr(M,\Agot_*) \longhookrightarrow  \Cscr(M,\Agot_*)
\]
of the space of holomorphic maps $M\to \Agot_*$  into the space of continuous maps   
is a weak homotopy equivalence. (See also \cite[Corollary 5.4.8]{Forstneric2011}.) 
Since the composition of weak homotopy equivalences is again a weak homotopy equivalence, 
we have the following corollary to Theorem \ref{th:WHE2}, generalizing \cite[Corollary 8.3]{AlarconForstnericCrelle}.

%
%
\begin{corollary}\label{cor:NC}
Let $M$ be an open Riemann surface, and let $\Agot$ be the null quadric in $\C^n$
for some  $n\ge 3$. Then the maps 
\[
	\Mgot_*(M,\R^n)  \lra \Cscr(M,\Agot_*),\qquad \Ngot_*(M,\C^n)   \lra \Cscr(M,\Agot_*),
\]
defined as in Theorem \ref{th:WHE2}, are weak homotopy equivalences.
\end{corollary}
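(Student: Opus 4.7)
The plan is a short composition argument that builds entirely on Theorem \ref{th:WHE2} together with a standard input from Oka theory. Writing $\iota\colon\Oscr(M,\Agot_*)\hra\Cscr(M,\Agot_*)$ for the inclusion, both maps in the statement factor as
\[
\Mgot_*(M,\R^n)\lra \Oscr(M,\Agot_*)\stackrel{\iota}{\lra}\Cscr(M,\Agot_*),\qquad \Ngot_*(M,\C^n)\lra\Oscr(M,\Agot_*)\stackrel{\iota}{\lra}\Cscr(M,\Agot_*),
\]
where the first arrow in each composition is precisely the map shown to be a weak homotopy equivalence in Theorem \ref{th:WHE2}. Since the class of weak homotopy equivalences is closed under composition, the entire corollary reduces to showing that $\iota$ itself is a weak homotopy equivalence.

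To handle $\iota$, I would appeal to Grauert's Oka principle, which requires $M$ to be Stein and the target to be an Oka manifold. The first is automatic, since every open Riemann surface is Stein (Behnke--Stein). For the second, I would identify $\Agot_*$ as a homogeneous space of the complex Lie group $\C^*\times O(n,\C)$: the $\C^*$ factor acts by scaling (admissible because $\Agot$ is a cone), while the complex orthogonal group $O(n,\C)$ acts linearly, preserving the defining quadratic form, and one checks that the joint action is transitive on $\Agot_*$. Grauert's classical theorem that every complex homogeneous manifold is Oka then identifies $\Agot_*$ as an Oka manifold. (Alternatively, and as noted in the excerpt, one may directly exhibit a dominating spray, as in \cite[Example 4.4]{AlarconForstneric2014IM}.)

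With both hypotheses in place, the parametric Oka property with approximation for maps from a Stein space into an Oka manifold (see \cite[Corollary 5.4.8]{Forstneric2011}) yields that $\iota$ is a weak homotopy equivalence, and composing with the two equivalences of Theorem \ref{th:WHE2} completes the proof. There is no genuine obstacle in this deduction: all the substance has been packaged into Theorem \ref{th:WHE2} (which in turn rests on the parametric h-principle Theorem \ref{th:PHP1}) and into the Oka-theoretic identification of $\Agot_*$, so that the corollary follows formally once these are in hand.
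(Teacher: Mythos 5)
Your proposal is correct and follows essentially the same route as the paper: factor each map through $\Oscr(M,\Agot_*)$, invoke Theorem \ref{th:WHE2} for the first factor, note that $\Agot_*$ is Oka (as a homogeneous space of $\C^*\times O(n,\C)$, or by ellipticity) so that the inclusion $\Oscr(M,\Agot_*)\hookrightarrow\Cscr(M,\Agot_*)$ is a weak homotopy equivalence by the Oka--Grauert principle, and compose. No gaps to report.
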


Let us return to Theorem \ref{th:WHE2}. Consider the following commuting diagram: 
\begin{equation}\label{eq:diagram}
\xymatrix{
	\Ngot_*(M,\C^n)  \ar[r]^\phi \ar[d]  &  \Oscr(M,\Agot_*)  \\ 
	\Re\Ngot_*(M,\C^n)   \ar@{^{(}->}[r]     &  \Mgot_*(M,\R^n)  \ar[u]_\psi
}
\end{equation}
The inclusion  at the bottom is a weak homotopy equivalence by
Theorem \ref{th:WHE1}. The left vertical map is the projection $F\mapsto \Re F$ of a null curve to its real part.  By continuity 
in the compact-open topology of the Hilbert transform that takes $u\in\Re\Ngot_*(M,\C^n)$ to its harmonic conjugate $v$ with $v(p)=0$, where $p\in M$ is any chosen base point, the left vertical map is a homotopy equivalence.  The map $\phi$ is given by $F\mapsto \di F/\theta$, and the map $\psi$ is given by $u\mapsto 2\di u/\theta$. Hence, if one of the maps $\phi$, $\psi$ is a weak homotopy equivalence, then so is the other one. To prove Theorem \ref{th:WHE2}, it thus suffices to consider only one of them.
 
The fact that the map $\phi\colon \Ngot_*(M,\C^n)   \to  \Oscr(M,\Agot_*)$
is a weak homotopy equivalence is a special case of Theorem \ref{th:WHE-A}.
The latter result establishes the weak homotopy equivalence principle 
for the space $\Igot_{A,*}(M,\C^n)$ of nondegenerate holomorphic immersions $M\to\C^n$ 
directed by any conical complex subvariety $A\subset \C^n$ such that
$A_*=A\setminus \{0\}$ is an Oka manifold; 
the null quadric $\Agot$ defined by \eqref{eq:Agot} is a special case.
The technical result behind it is Theorem \ref{th:PHP2} which establishes the 
parametric h-principle with approximation in this context, generalizing the basic h-principle 
\cite[Theorem 2.6]{AlarconForstneric2014IM}.

These results also hold in the special case when the cone $A$ equals $\C^n$.
This deserves particular attention, so let us explain it in some details. 
For any integer $n\ge 1$, we denote by
\[
	\Igot(M,\C^{n}) 
\]
the subset of $\Oscr(M,\C^n)$ consisting of all holomorphic immersions $M\to\C^{n}$.
(Note that every immersion $M\to \C^{n}$ is nondegenerate in the sense of 
Definition \ref{def:nondegenerate}.) Fix a nowhere vanishing holomorphic $1$-form $\theta$ on $M$.
For every $F\in \Igot(M,\C^{n})$, the map
\[	
	\vartheta_F = dF/\theta\colon M\to \C^n_* =\C^n\setminus\{0\}
\]
is holomorphic and its range avoids the origin. This defines a continuous map
\[ \vartheta:\Igot(M,\C^{n}) \to \Oscr(M,\C^{n}_*). \]


The natural inclusion $\iota\colon\Oscr(M,\C^{n}_*)\hookrightarrow \Cscr(M,\C^{n}_*)$ 
is a weak homotopy equivalence by the Oka-Grauert principle \cite[Theorem 5.3.2]{Forstneric2011};  
if $M$ has  finite topological type, then it is even a homotopy equivalence \cite{Larusson2015}.
Further, the projection $\C^{n}_*\to \S^{2n-1}$, $z\mapsto z/|z|$, of $\C^{n}_*$ onto the unit sphere 
$\S^{2n-1}\subset \C^{n}$ induces a homotopy equivalence 
$\tau\colon \Cscr(M,\C^{n}_*)\to \Cscr(M,\S^{2n-1})$. In summary, we have maps
\[ 
	\Igot(M,\C^{n}) \stackrel{\vartheta}{\longrightarrow} \Oscr(M,\C^{n}_*) 
	\stackrel{\iota}{\longhookrightarrow} \Cscr(M,\C^{n}_*) \stackrel{\tau}{\longrightarrow} \Cscr(M,\S^{2n-1}),
\] 
where $\iota$ and $\tau$ are known to be (weak) homotopy equivalences.

The following Oka principle for holomorphic immersions from open Riemann surfaces to Euclidean spaces
is a special case of Theorem \ref{th:WHE-A}. 

%
%
\begin{theorem} \label{th:immersions}
For every open Riemann surface $M$, the maps  $\vartheta : \Igot(M,\C^{n}) \to  \Oscr(M,\C^{n}_*)$ and
\[
	\vartheta/|\vartheta|= \tau\circ\iota\circ \vartheta\colon \Igot(M,\C^{n})\longrightarrow \Cscr(M,\S^{2n-1})
\]
are weak homotopy equivalences; they are homotopy equivalences if the surface $M$ has  finite topological type.
\end{theorem}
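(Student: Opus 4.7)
The plan is to obtain Theorem \ref{th:immersions} as the special case $A=\C^n$ of Theorem \ref{th:WHE-A}. First I would verify that this specialisation is legitimate. When $A=\C^n$, the $A$-directedness condition on the derivative is vacuous, so $\Igot_{A,*}(M,\C^n)$ coincides as a topological space with $\Igot(M,\C^n)$; the nondegeneracy condition of Definition \ref{def:nondegenerate} is automatic for immersions into $\C^n$, as noted parenthetically in the paragraph introducing $\Igot(M,\C^n)$. The punctured cone $A_*=\C^n_*$ is a complex homogeneous space, for instance under the linear action of $\mathrm{GL}(n,\C)$, hence an Oka manifold by Grauert's theorem (cf.\ \cite[Proposition 5.5.1]{Forstneric2011}), so the hypothesis of Theorem \ref{th:WHE-A} is satisfied. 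The conclusion of that theorem gives directly that $\vartheta\colon \Igot(M,\C^n)\to\Oscr(M,\C^n_*)$ is a weak homotopy equivalence.

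The weak homotopy equivalence of the composition $\tau\circ\iota\circ\vartheta$ then follows by combining this with the (weak) homotopy equivalences $\iota\colon\Oscr(M,\C^n_*)\hookrightarrow\Cscr(M,\C^n_*)$, coming from the Oka-Grauert principle \cite[Theorem 5.3.2]{Forstneric2011}, and $\tau\colon\Cscr(M,\C^n_*)\to\Cscr(M,\S^{2n-1})$, induced by the radial retraction onto the unit sphere; both are already recorded in the paragraph preceding the statement.

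For the finite-topological-type strengthening, the task is to upgrade each weak homotopy equivalence to a genuine one. Here I would invoke the fact that all mapping spaces in question are absolute neighborhood retracts and hence have the homotopy type of CW complexes, so Whitehead's theorem promotes the weak homotopy equivalences to homotopy equivalences. For $\Oscr(M,\C^n_*)$ this is the content of \cite{Larusson2015}, and $\Igot(M,\C^n)$ is an open subset of $\Oscr(M,\C^n)$, which is covered by the same circle of ideas. The space $\Cscr(M,\S^{2n-1})$ of continuous maps into a compact ANR is itself an ANR by standard arguments.

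The main obstacle is packaged entirely into Theorem \ref{th:WHE-A} itself, whose proof rests on the parametric h-principle with approximation (Theorem \ref{th:PHP2}) obtained by combining modern Oka theory with Gromov's convex integration, in analogy with the proofs of Theorems \ref{th:WHE1} and \ref{th:WHE2}. Once that machinery is in hand, the specialisation to $A=\C^n$ and the subsequent passage to the sphere-valued formulation are formal bookkeeping.
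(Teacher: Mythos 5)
Your derivation of the weak homotopy equivalences is correct and is exactly the paper's route: specialise Theorem \ref{th:WHE-A} to $A=\C^n$ (nondegeneracy being automatic, $\C^n_*$ being Oka) and compose with the known (weak) homotopy equivalences $\iota$ and $\tau$. The gap is in the finite-topological-type part. Your strategy (ANR plus Whitehead) is the right one in spirit, but your justification that $\Igot(M,\C^n)$ is an ANR does not work. First, $\Igot(M,\C^n)$ is \emph{not} an open subset of $\Oscr(M,\C^n)$ in the compact-open topology when $M$ is noncompact: a map can agree closely with an immersion on any fixed compact set and still have a critical point far out (e.g.\ on $M=\C$, $G(z)=z+\epsilon(z-R)^2$ with $\epsilon=R^{-3}$ and $R$ large lies in any prescribed compact-open neighborhood of the identity but is not an immersion). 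So openness in a known ANR cannot be invoked. Second, and more substantively, under $F\mapsto (dF/\theta, F(p))$ the space $\Igot(M,\C^n)$ is identified with the space of holomorphic maps $M\to\C^n_*$ whose associated $1$-form has \emph{vanishing periods} (times $\C^n$), and the ANR property of such period-constrained spaces is precisely the nontrivial content of the paper's Section \ref{sec:strong-h-principle}: it is proved via the Dugundji--Lefschetz criterion, using Lemma \ref{lem:contractible} (contractible neighborhoods in $\Ascr_{*,0}(L_m,A_*)$ over an exhaustion, compatible with restriction, built from period-dominating sprays and the implicit function theorem in Banach spaces) together with the parametric h-principles with approximation of Theorems \ref{th:PHP2} and \ref{th:PHP3}. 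That is the argument behind Theorem \ref{th:anr}, whose analogue for general conical $A$ with $A_*$ Oka (in particular $A=\C^n$) is what Remark \ref{rem:general} invokes to get the homotopy equivalence in Theorem \ref{th:immersions}. None of this is supplied or can be bypassed by ``the same circle of ideas'' as the ANR property of $\Oscr(M,\C^n_*)$ from \cite{Larusson2015}, because the latter concerns the unconstrained mapping space, not the subspace cut out by the period conditions.

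A secondary remark: once the ANR property of $\Igot(M,\C^n)$, $\Oscr(M,\C^n_*)$ and $\Cscr(M,\S^{2n-1})$ is in hand, your Whitehead argument does yield the stated homotopy equivalences; the paper goes further (Corollary \ref{cor:strong} and Remark \ref{rem:general}), using the cofibration property of closed ANR pairs to upgrade to strong deformation retracts, but that extra strength is not needed for the statement at hand.
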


The underlying technical result (the parametric Oka principle with approximation) is 
Theorem \ref{th:PHP2}. For the last statement concerning homotopy equivalences, 
see Corollary \ref{cor:strong} and Remark \ref{rem:general}.
From Theorem \ref{th:immersions} and standard homotopy theory we obtain the following corollary.
(Compare with Corollary \ref{cor:components}.)

\begin{corollary}\label{cor:immersions}
The path components of the space $\Igot(M,\C)$ are in bijective correspondence
with the elements of $H^1(M;\Z)=\Z^\ell$, and $\Igot(M,\C^{n})$ is $(2n-3)$-connected if $n>1$.
\end{corollary}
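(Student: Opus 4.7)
The plan is to derive the corollary directly from Theorem \ref{th:immersions}, which says that $\vartheta/|\vartheta|\colon\Igot(M,\C^{n})\to\Cscr(M,\S^{2n-1})$ is a weak homotopy equivalence. It therefore suffices to compute the set of path components and the connectivity of $\Cscr(M,\S^{2n-1})$.

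First, I would invoke the standard fact that every open Riemann surface $M$ deformation retracts onto a properly embedded graph, hence is homotopy equivalent to a wedge $\bigvee_{\ell}\S^1$ of $\ell=\mathrm{rk}\,H^1(M;\Z)$ circles (where $\ell$ is possibly infinite). Consequently, for any target $Y$, the mapping space $\Cscr(M,Y)$ has the same weak homotopy type as $\Cscr(\bigvee_{\ell}\S^1,Y)$. For the case $n=1$, the target $\S^1$ is an Eilenberg--MacLane space $K(\Z,1)$, so
\[
\pi_0\,\Cscr(M,\S^1)\;=\;[M,\S^1]\;=\;H^1(M;\Z)\;\cong\;\Z^{\ell},
\]
which combined with Theorem \ref{th:immersions} gives the first assertion.

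For $n>1$, I would fix a basepoint $m_0\in M$ and consider the evaluation Serre fibration
\[
\Cscr_*(M,\S^{2n-1})\;\lra\;\Cscr(M,\S^{2n-1})\;\stackrel{\mathrm{ev}_{m_0}}{\lra}\;\S^{2n-1}.
\]
The pointed mapping space from a wedge identifies with the corresponding product of loop spaces, giving $\Cscr_*(M,\S^{2n-1})\simeq(\Omega\,\S^{2n-1})^{\ell}$ and hence $\pi_k\,\Cscr_*(M,\S^{2n-1})\cong\pi_{k+1}(\S^{2n-1})^{\ell}$. Since $\pi_j(\S^{2n-1})=0$ for $0\le j\le 2n-2$, both the fiber's $\pi_k$ for $k\le 2n-3$ and the base's $\pi_k$ for $k\le 2n-2$ vanish, as does $\pi_1(\S^{2n-1})$ which acts on $\pi_0$ of the fiber. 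The long exact sequence of the fibration then forces $\pi_k\,\Cscr(M,\S^{2n-1})=0$ for all $0\le k\le 2n-3$, yielding the claimed $(2n-3)$-connectedness.

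I do not anticipate a serious obstacle, since Theorem \ref{th:immersions} carries out the real geometric work and the rest is a routine homotopy calculation. The only mildly delicate point is the case of infinite topological type, where $\ell$ may be infinite; however, homotopy groups commute with arbitrary products, so the identifications $\pi_k(\Omega\,\S^{2n-1})^{\ell}\cong\pi_{k+1}(\S^{2n-1})^{\ell}$ and the resulting vanishing in the stated range remain valid, and the argument goes through unchanged.
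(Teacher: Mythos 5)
Your proposal is correct and follows exactly the route the paper intends: the paper gives no separate proof of Corollary \ref{cor:immersions} beyond deducing it from Theorem \ref{th:immersions} by ``standard homotopy theory,'' and your computation (identifying $M$ up to homotopy with a wedge of $\ell$ circles, reading off $\pi_0\Cscr(M,\S^1)\cong H^1(M;\Z)$, and using the evaluation fibration together with the vanishing of $\pi_j(\S^{2n-1})$ for $j\le 2n-2$ to get $(2n-3)$-connectedness) is precisely that standard argument, carried out correctly, including the remark that homotopy groups commute with the possibly infinite product.
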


Holomorphic immersions from an open Riemann surface $M$ to $\C$ were treated by 
Gunning and Narasimhan \cite{GunningNarasimhan1967MA}.
They proved that every continuous map $M\to \C_*=\C\setminus\{0\}$ is homotopic to the derivative $dF/\theta$ 
of a holomorphic immersion $F\colon M\to\C$. (Note that such $F$ is a holomorphic function on $M$
without critical points.) However, to the best of our knowledge, the homotopy type of the space of 
holomorphic immersions $M\to\C^n$ has not been considered in the literature.

The basic (nonparametric) case of Theorem \ref{th:immersions} is a special case of the following 
basic h-principle for holomorphic immersions of Stein manifolds to complex Euclidean spaces, 
due to Eliashberg and Gromov \cite[Sec.\ 2.1.5]{Gromov1986book}:

{\em Suppose that the cotangent bundle $T^*X$ of a Stein manifold $X$  
is pointwise generated by $(1, 0)$-forms $\phi_1,\ldots, \phi_n$ for some $n>\dim X$.
Then the $n$-tuple $\phi=(\phi_1,\ldots, \phi_n)$ can be changed by a homotopy 
of such $n$-tuples generating $T^*X$  to the differential $df=(df_1, \ldots, df_n)$
of a holomorphic immersion $f=(f_1,\ldots,f_n)\colon X\to\C^n$.}

A detailed proof of this result and an extension to the $1$-parametric case was given by 
Kolari\v c \cite[Theorem 1.1]{Kolaric2011DGA}. He also provided a parametric version
in which the parameter space is a Stein manifold \cite[Theorem 1.3]{Kolaric2011DGA}. 
However, his result does not seem to give the full parametric h-principle 
(and, by extension, the weak homotopy equivalence principle) for holomorphic 
immersions $X\to \C^n$ when $n>\dim X$. The main problem is that the 
parametric h-principle concerns the existence of certain homotopies for
suitable pairs of compact parameter spaces $Q\subset P$, where the homotopy is fixed 
on $Q$. 

It is still an open problem whether the Eliashberg-Gromov theorem remains true for $n=\dim X$
(see \cite[Problem 8.11.3 and Theorem 8.12.4]{Forstneric2011}), except in 
the case $n=1$ when the problem was solved affirmatively by Gunning and Narasimhan
\cite{GunningNarasimhan1967MA}.

%
%

Although we have stated our results for an open Riemann surface, we wish to point out that the 
analogous results hold when $M$ is a compact bordered Riemann surface and we consider
null curves $M\to\C^n$ and conformal minimal immersions $M\to\R^n$ of class $\Cscr^r(M)$ 
for some $r\ge 1$. This will be clear from the proofs which proceed by induction over an exhausion
of the given open Riemann surface by compact smoothly bounded Runge domains.
We refer to \cite[Theorem 4.1]{AlarconForstnericCrelle} for the basic (nonparametric) case in this setting.

In the final section of the paper, we assume that the open Riemann surface $M$ has finite topological type.  This means that the fundamental group of $M$ is finitely generated or, equivalently, that $M$ can be obtained from a compact Riemann surface by removing a finite number of mutually disjoint points and closed discs.  Using the theory of absolute neighborhood retracts as in \cite{Larusson2015} and results, originating in \cite{Forstneric2007}, on certain spaces of maps being Banach manifolds, we are able to upgrade the inclusion in Theorem \ref{th:WHE1}, not only to a homotopy equivalence, but in fact to the inclusion of a strong deformation retract.  The same holds for the maps in Theorem \ref{th:WHE2} and Corollary \ref{cor:NC} with their sources and targets appropriately modified (see Corollary \ref{cor:strong} and Remark \ref{rem:general}).

Recall the theorem of Alarc\'on and Forstneri\v c \cite[Theorem 1.1]{AlarconForstnericCrelle} that every nonflat conformal minimal immersion $u:M\to\R^n$ is isotopic through such maps to the real part of a holomorphic null curve $M\to\C^n$.  By Corollary \ref{cor:strong}, when $M$ has finite topological type, this deformation can be carried out for all $u$ at once, in a way that depends continuously on $u$ and leaves $u$ unchanged if it is the real part of a holomorphic null curve to begin with.  The key to this result is the fact that the spaces $\Re\Ngot_*(M,\C^n)$ and $\Mgot_*(M,\R^n)$ are absolute neighborhood retracts when $M$ has finite topological type (see Theorem \ref{th:anr}).  The proof requires a parametric h-principle with approximation on suitable compact subsets of $M$.  It is for this purpose that we have included approximation in the parametric h-principles in Theorems \ref{th:PHP1}, \ref{th:PHP2} and \ref{th:PHP3}.

We conclude the introduction by indicating an application of Corollary \ref{cor:NC} to the question of identifying
path connected components of the mapping spaces under consideration. This question was raised in 
\cite[Section 8]{AlarconForstnericCrelle} where a partial answer was provided.

Recall that the first homology group $H_1(M;\Z)$ of an open Riemann surface 
is a free abelian group on countably many generators:
$H_1(M;\Z)\cong \Z^l$ with $l\in \Z_+\cup\{\infty\}$. (Here, $\Z_+=\{0,1,2,\ldots\}$.)
On the target side, it is easily seen that the
punctured null quadric $\Agot_*^{n-1}\subset \C^n$ is simply connected when $n\ge 4$,
while $\pi_1(\Agot_*^2)\cong \Z_2$ (see \cite[Equation (8.3)]{AlarconForstnericCrelle}). 
The latter fact is also seen by observing that $\Agot_*^2$ admits the unbranched 
two-sheeted holomorphic covering 
\begin{equation}\label{eq:spinorial}
	\pi\colon \C^2_*=\C^2\setminus\{0\}\to \Agot^2_*,\quad \pi(u,v)=\big(u^2-v^2,\imath(u^2+v^2),2uv\big)
\end{equation}
from the simply connected space $\C^2_*$.  (Here, $\imath = \sqrt{-1}$.)  The map $\pi$, or its variants, is called the {\em spinorial representation formula} for the null quadric in $\C^3$.
It follows that the path connected components of the space $\Cscr(M,\Agot_*^2)$ are in one-to-one correspondence with 
the elements of the free abelian group $(\Z_2)^l$ (see \cite[Proposition 8.4]{AlarconForstnericCrelle}),
and $\Cscr(M,\Agot_*^{n-1})$ is path connected if $n\ge 4$. 
Hence, Corollary \ref{cor:NC} implies the following result. (A partial result in this
direction is given by \cite[Proposition 8.4]{AlarconForstnericCrelle}.)

%
%
%
%
\begin{corollary} \label{cor:components}
Let $M$ be a connected open Riemann surface with $H_1(M;\Z)\cong \Z^l$ for some $l\in \Z_+\cup\{\infty\}$. 
Then the path connected components of each of the spaces $\Mgot_*(M,\R^3)$ and $\Ngot_*(M,\C^3)$
are in one-to-one correspondence with the elements of the abelian group $(\Z_2)^l$. 
If $n\ge 4$, then the spaces $\Mgot_*(M,\R^n)$ and $\Ngot_*(M,\C^n)$ are path connected.
\end{corollary}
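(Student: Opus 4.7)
The plan is to apply Corollary~\ref{cor:NC} to transfer the counting of path components from the minimal surface and null curve spaces to the continuous mapping space $\Cscr(M,\Agot_*^{n-1})$, and then to read off $\pi_0$ of the latter from the homotopy type of $M$ and the fundamental group of $\Agot_*^{n-1}$ as recalled in the paragraph preceding the statement. Concretely, Corollary~\ref{cor:NC} asserts that the two natural maps
\[
\Mgot_*(M,\R^n)\lra \Cscr(M,\Agot_*^{n-1}),\qquad \Ngot_*(M,\C^n)\lra \Cscr(M,\Agot_*^{n-1})
\]
are weak homotopy equivalences, so in particular they induce bijections on $\pi_0$. Hence it suffices to identify $\pi_0\Cscr(M,\Agot_*^{n-1})=[M,\Agot_*^{n-1}]$, the set of free homotopy classes of continuous maps.

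Next, I would use the homotopy type of the source. A connected open Riemann surface $M$ with $H_1(M;\Z)\cong \Z^l$ is homotopy equivalent to a wedge $\bigvee_l \S^1$ of $l$ circles, since every noncompact connected surface deformation retracts onto a $1$-dimensional CW complex, and any connected such complex has the homotopy type of a wedge of circles determined by the rank of its $H_1$. By elementary obstruction theory, free homotopy classes of maps from a $1$-dimensional CW complex to a path-connected target $Y$ are classified by $\mathrm{Hom}(\pi_1 M,\pi_1 Y)$ modulo the conjugation action of $\pi_1(Y)$; for $M\simeq \bigvee_l \S^1$ this gives $[M,Y]\cong\pi_1(Y)^l/\mathrm{conj}$, and whenever $\pi_1(Y)$ is abelian the conjugation is trivial, so $[M,Y]\cong\pi_1(Y)^l$.

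Finally, I would plug in the value of $\pi_1(\Agot_*^{n-1})$ already recalled in the excerpt. For $n\ge 4$ the quadric $\Agot_*^{n-1}$ is simply connected, so $[M,\Agot_*^{n-1}]$ is a single point and both mapping spaces are path connected. For $n=3$, the spinorial covering \eqref{eq:spinorial} exhibits $\Agot_*^2$ as the base of an unbranched two-sheeted cover by the simply connected $\C^2_*$, so $\pi_1(\Agot_*^2)\cong\Z_2$ and we obtain exactly $(\Z_2)^l$ path components, as asserted. The only place requiring care is the case $l=\infty$, where one interprets $(\Z_2)^l$ as $\bigoplus_l\Z_2$ and reduces to the finite case via an exhaustion of $M$ by compact smoothly bounded Runge domains of finite topological type, using that free homotopy classes of maps out of a 1-complex are determined by their restriction to any finite subwedge. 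This is standard obstruction theory rather than a substantive obstacle; the geometric content of the corollary has already been delivered by Corollary~\ref{cor:NC}.
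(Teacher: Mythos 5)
Your main line of argument is exactly the paper's: transfer the count of path components through the weak homotopy equivalences of Corollary \ref{cor:NC}, then compute $\pi_0\Cscr(M,\Agot_*^{n-1})$ from the homotopy type of $M$ (a wedge of $l$ circles) and from $\pi_1(\Agot_*^{n-1})$, which is $\Z_2$ for $n=3$ by the covering \eqref{eq:spinorial} and trivial for $n\ge 4$. For finite $l$, and for path connectedness when $n\ge 4$, this is correct and complete.

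Your handling of the case $l=\infty$, however, is wrong. Since $M$ is homotopy equivalent to a countable wedge of circles and $\pi_1(\Agot_*^2)\cong\Z_2$ is abelian, the set of free homotopy classes is $\mathrm{Hom}\bigl(\pi_1(M),\Z_2\bigr)\cong \mathrm{Hom}\bigl(H_1(M;\Z),\Z_2\bigr)=H^1(M;\Z_2)$, and a homomorphism from the free (abelian) group on countably many generators to $\Z_2$ may be nontrivial on infinitely many generators: the answer is the full Cartesian power $\prod_{i=1}^{\infty}\Z_2$, which is uncountable, not the direct sum $\bigoplus_{i}\Z_2$. Accordingly, your claim that a free homotopy class of a map out of a $1$-complex is determined by its restriction to a finite subwedge is false; what is true is that the class is determined by the induced homomorphism on $\pi_1$, equivalently by the restrictions to \emph{all} finite subwedges simultaneously, and that inverse limit is again the product, not the sum. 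So $(\Z_2)^l$ in the statement must be read as the Cartesian power (this is also what Proposition 8.4 of Alarc\'on--Forstneri\v c, cited by the paper for precisely this point, gives, namely $H^1(M;\Z_2)$); with that reading no exhaustion argument is needed, whereas the reduction you sketch would only account for classes that are trivial on all but finitely many handles and would therefore miss most components.
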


We illustrate Corollary \ref{cor:components} by a few examples in dimension $n=3$.

Let $\pi\colon \C_*^2 \to\Agot_*$ be the universal covering map \eqref{eq:spinorial}.
Let $M$ be $\C_*=\C\setminus \{0\}$ or an annulus.  
Since $\Agot_*$ is an Oka manifold and $\pi_1(\Agot_*)=\Z_2$,
there are precisely two homotopy classes of holomorphic maps $f\colon M\to \Agot_*$.
Note that $f$ is nullhomotopic if and only if it factors through $\pi$
(by a continuous, or equivalently holomorphic, map). 
Assume now that $f=(f_1,f_2,f_3)$ is the derivative of a holomorphic null curve $F:M\to\C^3$
and $f_1\neq \imath f_2$. Consider the Weierstrass representation
(see Osserman \cite[Lemma 8.1, p.\ 63]{Ossermanbook}):
\[ 
	f_1=(1-g^2)\eta, \quad f_2=\imath (1+g^2)\eta, \quad f_3=2g\eta, 
\]
where $g$ is meromorphic and $\eta$ is holomorphic on $M$.  
Assume for simplicity that $g$ is holomorphic or, equivalently, that $\eta$ has no zeros.
Then $f$ factors through $\pi$ if and only if $\eta$ has a square root on $M$.
Indeed, if $\eta$ has a square root then $f=\pi(\sqrt{\eta},g\sqrt{\eta})$;
conversely, if $f=\pi(\sigma,\tau)$ for some holomorphic map $(\sigma,\tau):M\to\C_*^2$, 
then $\sigma^2=\eta$.

\begin{example}
1. {\em A flat null curve:}
Let $M=\C_*$, and let $f \colon\C_*\to \Agot_*\subset \C^3$ be the holomorphic map
$f(\zeta)=\zeta(1,\imath,0)$ onto the ray of $\Agot_*$ spanned by
the null vector $(1,\imath,0)$. In this case, $g=0$ and $\eta(\zeta)=\zeta$ 
does not have a square root on $M$.  Thus, the flat null curve $F(\zeta)=\tfrac 1 2(\zeta^2, \imath \zeta^2,0)$ 
$(\zeta\in \C_*$) lies in the nontrivial isotopy class.  

2. {\em The catenoid:}  $M=\C_*$, $g(\zeta)=\zeta$, and $\eta(\zeta)=1/\zeta^2$.
Since $\eta$ has a square root on $M$, we are in the trivial isotopy class.  The same holds for the helicoid
which is parameterized by $\C$.

3. {\em Henneberg's surface} (see the formulas in L\'opez and Mart\'in's survey paper \cite{LopezMatin1999}):  
\[
	M=\C\setminus\{0,1,-1,\imath,-\imath \}, \quad g(\zeta)=\zeta, \quad \eta(\zeta)=1-\zeta^{-4}.
\]  
On a small punctured disc centered at one of the points $1$, $-1$, $\imath$, or $-\imath$, the function 
$\eta$ does not have a square root,  so we are in the nontrivial isotopy class.  
On the punctured disc $\mathbb D_*=\D \setminus\{0\}$, $\eta$ has a square root, so we are in the trivial isotopy class.
Here, $\D=\{\zeta\in \C\colon |\zeta|<1\}$.

4. {\em Two-sheeted covering of Meeks's minimal M\"obius strip} (see Meeks \cite[Theorem 2]{Meeks1981DMJ}):  
\[
	M=\C_*,\quad g(\zeta)=\zeta^2\,\dfrac{\zeta+1}{\zeta-1}, \quad \eta(\zeta)=\imath\, \dfrac{(\zeta-1)^2}{\zeta^4}.
\]
Note that  $\eta$ has a square root on $M$.  Despite the pole of $g$ at $1$, we get a holomorphic factorization 
through $\pi$ and we are in the trivial isotopy class. Let $F=u+\imath v\colon \C_* \to \C^3$ be the null curve
with this Weierstrass data. Then $u$ is invariant with respect to the fixed-point-free antiholomorphic involution 
$\Igot(\zeta) = -1/\bar \zeta$ on $\C_*$, and hence it induces a conformal minimal immersion 
$\tilde u\colon \C_*/\Igot \to\R^3$. This is Meeks's complete minimal M\"obius strip in $\R^3$ with finite total curvature
$-6\pi$.
\end{example}


\section{Preliminaries}\label{sec:prelim}

Let $M$ be an open Riemann surface. As we have already mentioned in the Introduction,
a smooth immersion $u=(u_1,\ldots,u_n)\colon M\to \R^n$ is conformal and minimal if and only if its $(1,0)$-differential
$\di u=(\di u_1,\ldots,\di u_n)$ is a $\C^n$-valued holomorphic $1$-form satisfying the nullity condition \eqref{eq:nullity}.
The {\em conjugate differential} of the map $u$ is defined by  
\[
	d^c u= \imath(\dibar u - \di u) = \Im (2 \di u).  
\]
We have
\[
	2\di u = du + \imath d^c u,\qquad dd^c u= 2\imath\,  \di\dibar u = \Delta u\cdotp  dx\wedge dy,
\]
where the second formula holds in any local holomorphic coordinate $x+\imath y$ on $M$.
Thus, $u$ is harmonic if and only if $d^c u$ is a closed $1$-form on $M$,
and in such case the equation $d^c u=dv$ holds for any local harmonic conjugate $v$ of $u$. 

Let $H_1(M;\Z)$ denote the first homology group of $M$. 
The {\em flux} of a harmonic immersion $u\colon M\to \R^n$ is the group homomorphism 
$\Flux(u) \colon H_1(M;\Z)\to\r^n$ which is defined on any homology class $[\gamma]\in H_1(M;\z)$ by
\begin{equation} \label{eq:flux}
	\Flux(u)([\gamma])=\int_\gamma d^c u = \int_\gamma \Im (2\di u).
\end{equation}
Since the integrals are independent of the choice of a path in a given homology class, 
we shall simply write $\Flux(u)(\gamma)=\Flux(u)([\gamma])$ in the sequel.

A compact set $K$ in an open Riemann surface $M$ is said to be $\Oscr(M)$-convex
if for every point $x \in M\setminus K$ there is a holomorphic function $f\in\Oscr(M)$
such that $|f(x)|>\sup_K |f|$. It is classical that this holds if and only if $M\setminus K$ does not
have any relatively compact connected components, and in this case, the Runge 
approximation theorem holds on $K$ \cite{Runge1885AM,Mergelyan1952}. 
For this reason, we also say that such a set $K$ is {\em Runge} in $M$.

We recall the notion of an {\em admissible set} (cf.\ \cite[Definition 5.1]{AlarconForstnericLopez2015MZ}).

\begin{definition}
\label{def:admissible}
A compact subset $S$ of an open Riemann surface $M$ is said to be {\em admissible} if $S=K\cup \Gamma$,
where $K=\cup_j K_j$ is a union of finitely many pairwise disjoint, compact, smoothly bounded domains 
$K_j$ in $M$ and $\Gamma=\cup_i \Gamma_i$ is a union of finitely many pairwise disjoint smooth arcs or closed 
curves that intersect $K$ only in their endpoints (or not at all), and such that their intersections with the boundary 
$bK$ are transverse. 
\end{definition}

Note that an admissible set $S\subset M$ is Runge in $M$ (i.e.,\ $\Oscr(M)$-convex)
if and only if the inclusion map $S\hra M$ induces an injective homomorphism 
$H_1(S;\z)\hra H_1(M;\z)$ of the first homology groups. If this holds, then we have the 
classical Mergelyan approximation theorem (cf.\ \cite{Mergelyan1952}): 
continuous functions on $S$ that are holomorphic in the interior $\mathring S$
can be approximated uniformly on $S$ by functions holomorphic on $M$.

Recall that $\Agot=\Agot^{n-1}\subset\C^n$ denotes the null quadric (\ref{eq:Agot}) and $\Agot_*=\Agot\setminus\{0\}$.
Given an admissible set $S=K\cup \Gamma\subset M$, we denote by 
\[
	\Ogot(S,\Agot_*)
\] 
the set of all smooth maps $S\to\Agot_*$  which are holomorphic on an unspecified open neighborhood of $K$ 
(depending on the map). We say that a map $f\in\Ogot(S,\Agot_*)$ is {\em nonflat} if it maps no component of $K$ 
and no component of $\Gamma$ to a ray in the null quadric $\Agot_*$. 
We denote by $\Ogot_*(S,\Agot_*)$ the subset of $\Ogot(S,\Agot_*)$ consisting of all nonflat maps.

%
%
%
%
\begin{definition}\label{def:generalized}
Let $M$ be an open Riemann surface, $\theta$ be a nowhere vanishing holomorphic 1-form on $M$,
and $S=K\cup \Gamma \subset M$ be an admissible subset 
(Definition \ref{def:admissible}). A {\em generalized conformal minimal immersion} on $S$ 
is a pair $(u,f\theta)$, where $f\in \Ogot(S,\Agot_*)$
and $u\colon S\to\r^n$ is a smooth map which is a conformal minimal immersion on an open 
neighborhood of $K$, such that the following properties hold:
\begin{itemize}
\item $f\theta = 2\partial u$ on an open neighborhood of $K$ in $M$, and
\item on any smooth curve $\alpha$  parameterizing a connected component of $\Gamma$ we have
that $\Re(\alpha^*(f\theta)) = \alpha^*(du)= d(u\circ\alpha)$.
\end{itemize}
A generalized conformal minimal immersion $(u,f\theta)$ is {\em nonflat} 
if $f\in\Ogot(S,\Agot_*)$ is nonflat.
\end{definition}

Note that a generalized conformal minimal immersion on a curve $C$ is nothing else than
a $1$-jet of a conformal immersion along $C$. 

The flux homomorphism can also be defined for a generalized conformal minimal immersion
$\tilde u=(u,f\theta)$ on an admissible set $S$: for each closed curve $\gamma \subset S$ we take
\[
	\Flux({\tilde u})(\gamma) = \int_\gamma \Im(f\theta).
\]
Clearly, this notion coincides with \eqref{eq:flux}  for curves $\gamma$ contained in the interior of $S$.

We denote by $\GM(S,\R^n)$ the set of all generalized conformal minimal immersions $S\to\r^n$ 
and by 
\[
	\GM_*(S,\R^n)\subset \GM(S,\R^n) 
\]
the subset consisting of all nonflat ones.


When $K$ is a compact set in a complex manifold $X$, we shall say that 
a map $f\colon K\to Y$ to another complex manifold $Y$ is {\em holomorphic on $K$} if $f$ extends
to a holomorphic map $U\to Y$ on an open neighborhood $U\subset X$ of $K$ (depending on the map).
Two such maps are considered the same if they agree on some neighborhood of $K$. 
For maps in a continuous family $f_p\colon K\to Y$ ($p\in P$), we shall assume that the neighborhood $U$
is independent of $p$. 


\section{An application of Gromov's convex integration lemma}\label{sec:CI-lemma}

Given a closed, embedded, real analytic curve $C$ in a Riemann surface $M$,
there are an open neighborhood $W \subset M$ of $C$ and a biholomorphic map 
$z\colon W \to \Omega$ onto an annulus $\Omega=\{z\in \c\colon r^{-1} < |z|< r\}$
taking $C$ onto the circle $\mathbb{S}^1=\{z\in \c: |z|=1\}$.  
The exponential map $\C \ni \zeta= x+\imath y \mapsto \exp(2\pi \imath\, \zeta)\in \C_*$ 
provides a universal covering of $\Omega$ by  
$\Sigma =\{x+\imath y :  x\in \R,\  |y|< (2\pi)^{-1}\log r \} \subset \c$, 
mapping $\R=\{y=0\}$ onto $\mathbb{S}^1$. 
We take $\zeta=x+\imath y$ as a uniformizing coordinate on $W$, with $C=\{y=0\}$. 

Let $n\ge 3$ be an integer. Assume that $u\colon M\to\r^n$ is a conformal harmonic immersion. 
The restriction $u|_W$ is given in this coordinate by a $1$-periodic conformal harmonic immersion 
$U\colon\Sigma\to \R^n$. Along the line $y=0$, we have that
\[
	U(x+\imath y) = h(x) - g(x) y + O(y^2)
\]
where $h(x)=U(x+\imath 0)$ and $g(x)=-U_y(x+\imath 0)$ are smooth $1$-periodic maps $\r\to\r^n$
and the remainder term satisfies $|O(y^2)|\le cy^2$ for some constant $c>0$ independent of $x\in\R$. 
It follows that
\[
	2\frac{\di}{\di \zeta} U(x+\imath y)|_{y=0} = \left(U_x - \imath U_y\right)|_{y=0} = 
	h'(x)+\imath  g(x).
\]
Conformality of the map $U$ implies that 
\begin{equation}\label{eq:conformal2}
	g(x) \cdotp  h'(x)=0 \quad \text{and} \quad |g(x)| = |h'(x)|>0
	\quad \text{hold\ for\ all} \ x\in \r.
\end{equation}
We also have that $d^c U =\imath(\dibar u - \di u) = -U_y dx+U_x dy$, and hence 
\[
	\int_C d^c u= \int_0^1  d^c U= - \int_0^1 U_y(x+\imath 0)\, dx = \int_0^1 g(x) \, dx.
\] 
Condition (\ref{eq:conformal2}) means that the $1$-periodic map $\sigma = h'+\imath g\colon \r \to \Agot_*$  
is a loop in  the null quadric (\ref{eq:Agot}) whose real part has vanishing period: 
\[
	\int_0^1 \Re (\sigma(x))\, dx=\int_0^1 h'(x) \, dx =0.
\] 
In \cite[Section 3]{AlarconForstnericCrelle} it is proved that for every such map $\sigma$ 
and any vector $w\in\r^n$ there exists an isotopy of $1$-periodic maps 
$\sigma_t\colon \r \to \Agot_*$ $(t\in [0,1])$ such that $\sigma_0=\sigma$ and
\begin{equation}\label{eq:isotopy}
	\int_0^1 \Re (\sigma_t(x)) \, dx=0\ \ \text{for\ all}\ t\in [0,1], \qquad
	\int_0^1 \Im (\sigma_1(x)) \, dx=w.	
\end{equation}
In particular, we can obtain $\int_0^1 \Im( \sigma_1(x)) \, dx=0$. (See \cite[Lemmas 3.2 and 3.4]{AlarconForstnericCrelle}.) 

The analogous description and results hold if $C$ is a real analytic arc. In this case, we may omit the periodicity condition
on $\sigma_t$ and replace the real period vanishing condition in \eqref{eq:isotopy} by the condition 
that $\int_0^1 \Re (\sigma_t(x)) \, dx = v_t$, where  $v_t\in \R^n$ $(t\in [0,1])$ is a given path.

We shall prove the following lemma which generalizes the above mentioned result from 
\cite{AlarconForstnericCrelle} to the parametric case, i.e., for a family of paths $\sigma_p\colon [0,1]\to \Agot_*$ 
depending continuously on a parameter $p\in P$ in a compact Hausdorff space. 
For technical reasons, we replace  \eqref{eq:isotopy} by approximate conditions; 
see property (iii) in Lemma \ref{lem:CI}.
We can also get the exact condition provided that each path $\sigma_p\colon [0,1]\to \Agot_*$
in the given family is nonflat, i.e., its image is not contained in a complex ray of $\Agot$.
We shall arrange the latter condition to hold when applying Lemma \ref{lem:CI} in the proof of Theorem \ref{th:WHE1}
given in the following section.


\begin{lemma}\label{lem:CI}
Let $Q\subset P$ be compact Hausdorff spaces and let $\sigma\colon P\times [0,1]\to \Agot_*$
be a continuous map. Consider $\sigma_p=\sigma(p,\cdotp) \colon [0,1]\to \Agot_*$ 
as a family of paths in $\Agot_*$ depending continuously on the parameter $p\in P$ . Set
\begin{equation}
\label{eq:alpha_p}
	\alpha_p =  \int_0^1 \sigma_p(s)\, ds \in \C^n, \quad  p\in P.
\end{equation}
Given a continuous family $\alpha^t_p \in \C^n$ $(p\in P,\ t\in [0,1])$ such that
\begin{equation}
\label{eq:alpha_tp}
	\text{$\alpha^t_p = \alpha_p$\ \ for all $(p,t) \in (P\times \{0\}) \cup (Q\times [0,1])$},
\end{equation}
there exists for every $\epsilon>0$ a homotopy of paths $\sigma^t_p\colon [0,1]\to \Agot_*$
$(p\in P,\ t\in [0,1])$ satisfying the following conditions:
\begin{itemize}
\item[\rm (i)]     $\sigma^t_p=\sigma_p$ \ for all $(p,t) \in (P\times \{0\}) \cup (Q\times [0,1])$; 
\vspace{1mm}
\item[\rm (ii)]   $\sigma^t_p(0)=\sigma_p(0)$ and  $\sigma^t_p(1)=\sigma_p(1)$ \ for all $p\in P$ and $t\in [0,1]$;
\vspace{1mm}
\item[\rm (iii)]  $\left| \int_0^1 \sigma^t_p(s)\, ds -\alpha^t_p\right| < \epsilon$ \ for all $p\in P$ and $t\in [0,1]$.
\end{itemize}    
\end{lemma}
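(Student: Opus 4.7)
This lemma is a parametric form of the convex integration result \cite[Lemmas 3.2 and 3.4]{AlarconForstnericCrelle}, and I would prove it by rendering that construction continuous in the parameter $(p,t)$. The algebraic input is that the convex hull of $\Agot_* \subset \C^n$ equals $\C^n$: since $\Agot^{n-1}$ is a $\C^*$-invariant cone of complex dimension $n-1 \geq 2$ containing $\R$-linearly independent vectors spanning $\C^n$, every vector in $\C^n$ is a positive real combination of elements of $\Agot_*$.

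The plan has two stages. \emph{Stage 1: target antiderivative paths.} Fix a small $\eta > 0$ and a smooth bump $\phi \colon [0,1] \to [0,\infty)$ supported in $[\eta, 1-\eta]$ with $\int_0^1 \phi = 1$. Define
\[ c^t_p(s) = \int_0^s \sigma_p(r)\,dr + (\alpha^t_p - \alpha_p)\int_0^s \phi(r)\,dr. \]
This is jointly continuous in $(p,t,s)$, satisfies $c^t_p(0) = 0$ and $c^t_p(1) = \alpha^t_p$, and has $s$-derivative $(c^t_p)'(s) = \sigma_p(s) + \phi(s)(\alpha^t_p - \alpha_p)$, which equals $\sigma_p(s) \in \Agot_*$ on the fixed set $(P\times\{0\})\cup(Q\times[0,1])$ and for $s \in [0,\eta] \cup [1-\eta, 1]$. \emph{Stage 2: convex integration.} Apply Gromov's fundamental lemma, in its parametric relative form, to approximate each $c^t_p$ in $C^0$ by a path $\wt c^t_p \colon [0,1] \to \C^n$ whose derivative takes values in $\Agot_*$, equals $(c^t_p)'$ on the fixed set and on $[0,\eta] \cup [1-\eta, 1]$, and depends continuously on $(p,t)$. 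Setting $\sigma^t_p := (\wt c^t_p)'$, conditions (i) and (ii) follow from these matching properties (at $s=0,1$ the derivative already lies in $\Agot_*$ and is left undisturbed), and (iii) follows from $C^0$-closeness of $\wt c^t_p$ to $c^t_p$ evaluated at $s = 1$.

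The main obstacle is the parametric continuous selection needed to run convex integration with parameters. I would express each vector $(c^t_p)'(s) \in \C^n$ as a convex combination of elements of $\Agot_*$, with coefficients and vectors depending continuously on $(p,t,s) \in P \times [0,1] \times [0,1]$; by compactness and the ampleness noted above this can be arranged via a locally finite partition of unity, glued across overlaps using the arc-connectedness of $\Agot_*$. On each small $s$-subinterval I would then substitute for $(c^t_p)'$ a rapid oscillation through the selected vectors in $\Agot_*$, with amplitude damped by $\phi$ so that $\sigma^t_p$ agrees exactly with $\sigma_p$ on the fixed set and near $s=0,1$, and with frequency chosen large enough that the integral over $[0,1]$ is within $\epsilon$ of $\alpha^t_p$ uniformly in $(p,t)$. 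The continuous dependence of this oscillation construction on $(p,t)$ is the technical heart of the argument; once it is in place, the three conditions fall out by construction.
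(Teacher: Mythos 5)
Your proposal is correct and takes essentially the same route as the paper: both arguments rest on the fact that $\Co(\Agot)=\C^n$ together with the parametric, relative form of Gromov's one-dimensional convex integration lemma, applied to a preliminary family of paths whose derivative agrees with $\sigma_p$ on $(P\times\{0\})\cup(Q\times[0,1])$ and near $s=0,1$ and whose total integral realizes (or approximates) $\alpha^t_p$. The differences are only in implementation: the paper builds the preliminary family by the cutoff interpolation $\chi(p,t)\sigma_p+(1-\chi(p,t))\alpha^t_p$ plus an endpoint fix, invokes Spring's version of the lemma with derivatives in an open thickening $\Omega_\delta$ of $\Agot_*$ and then retracts back onto $\Agot_*$ (addressing in Remark \ref{rem:Spring} that $P$ is merely compact Hausdorff), whereas you correct the integral with a bump in $s$ and appeal directly to the version of the lemma for the path-connected set $\Agot_*$ — the parametric/relative continuity issue you flag as the technical heart is exactly the point the paper settles by that remark.
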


\begin{proof}
We shall reduce the proof to a parametric version of Gromov's one-dimensional 
{\em convex integration lemma} (see \cite[2.1.7.\ One-Dimensional Lemma]{Gromov1973}).

By \cite[Lemma 3.1]{AlarconForstneric2014IM}, the convex hull $\Co (\Agot)\subset \C^n$ of the null quadric 
$\Agot\subset \C^n$ \eqref{eq:Agot} equals $\C^n$. Hence, there exists a number $r_1>0$ such that
\[
	\{\alpha^t_p: p\in P,\ t\in [0,1]\} \subset \Co(\Agot\cap r_1\B^n).
\]
Here, $\B^n$ is the open unit ball in $\C^n$; we sometimes omit the superscript.
By increasing $r_1$ if necessary and choosing $r_0\in (0,r_1)$ small enough, we can ensure that
\[
	\sigma(P\times [0,1]) \subset \Agot_{r_0,r_1}:= \Agot\cap r_1\B\setminus r_0\overline\B.
\]
Note that $\Co(\Agot_{r_0,r_1})=\Co( \Agot\cap r_1\B)$.  For any $\delta>0$, set
\[
	\Omega_{\delta}=\bigl\{(z_1,\ldots,z_n)\in \C^n :  |z_1^2+\cdots + z_n^2|<\delta,\ 
	r_0 <  |z_1|^2+\cdots +|z_n|^2 <r_1\bigr\}.
\]
Then we have that
\[
	\sigma(P\times [0,1])\subset  \Agot_{r_0,r_1} \subset \Omega_{\delta}\quad \text{and}\quad 
	\{\alpha^t_p : p\in P,\ t\in [0,1]\} \subset \Co(\Omega_{\delta}).
\]

Let $\rho\colon U\to \Agot_*$ be a smooth retraction from an open neighborhood $U \subset \C^n$
of $\Agot_*$. Pick $\delta>0$ small enough such that
$\Omega_\delta\subset U$ and for every path $\gamma\colon [0,1]\to \Omega_\delta$ we have
\begin{equation}\label{eq:estimate1}
	\left| \int_0^1 \gamma(s) \, ds - \int_0^1 \rho(\gamma(s))\,ds \right| < \epsilon/4.
\end{equation}
We fix a number $\delta$ satisfying these conditions.

Our construction proceeds in three steps. In the first step, we shall find a continuous family of paths 
\[
	\tau_p^t \colon [0,1] \to \Co(\Omega_\delta), \quad 	(p,t)\in P\times [0,1]
\]
satisfying conditions (i)--(iii) in Lemma \ref{lem:CI}, with $\tau^t_p$ in place of $\sigma^t_p$
and with $\epsilon$ replaced by $\epsilon/2$.
In the second step, we shall deform the family $\tau^t_p$ 
to a family of paths $\tilde \tau^t_p\colon [0,1]\to \Omega_\delta$, 
keeping the deformation fixed on the set 
\begin{equation}\label{eq:R}
	R:=(P\times \{0\}) \cup (Q\times [0,1]),
\end{equation}
such that the integrals change by less than $\epsilon/4$.
It is here that Gromov's convex integration lemma will be used.
In the last step, we shall apply to $\tilde \tau^t_p$ the retraction $\rho\colon \Omega_\delta \to \Agot_*$
in order to get the desired family of paths $\sigma^t_p=\rho\circ \tilde \tau^t_p\colon [0,1]\to \Agot_*$.

Choose a continuous function $\chi\colon P\times [0,1]\to [0,1]$ supported in a 
small neighborhood of the set $R$ \eqref{eq:R}
and satisfying $\chi=1$ on $R$. Consider the continuous family of paths 
\[
	\tilde\sigma_p^t  \colon [0,1] \to \Co(\Omega_\delta), \quad (p,t)\in P\times [0,1]
\]
given by
\begin{equation}\label{eq:tildesigma}
	\tilde\sigma_p^t(s) = \chi(p,t)\sigma_p(s) + (1-\chi(p,t)) \alpha^t_p,\quad s\in [0,1].
\end{equation}
Clearly, $\tilde\sigma_p^t=\sigma_p$ for $(p,t) \in R$ (see \eqref{eq:R})  and hence condition (i) holds. 
By \eqref{eq:alpha_p}, we have 
\[
 	\int_0^1 \tilde\sigma_p^t(s) \, ds= \chi(p,t) \alpha_p +  (1-\chi(p,t)) \alpha^t_p.
\]
Since $\chi=1$ on the set $R$ given by \eqref{eq:R} and 
$\alpha^t_p = \alpha_p$ for all $(p,t) \in R$ in view of \eqref{eq:alpha_tp}, 
the above expression is arbitrarily close to $\alpha_p^t$ uniformly on $(p,t)\in P\times [0,1]$, 
provided that the support of $\chi$ is contained in a sufficiently small 
neighborhood of $R$. Thus, we may arrange by a suitable choice of the cut-off function $\chi$ 
that 
\begin{equation}\label{eq:estimate2}
	\left| \int_0^1 \tilde \sigma^t_p(s)\, ds -\alpha^t_p\right| < \epsilon/4, \quad
	(p,t)\in P\times [0,1].
\end{equation}

In order to obtain condition (ii), we make a small correction near the two endpoints 
$s=0$ and $s=1$ of the parameter interval $[0,1]$ of our paths. 
Pick a number $0<\eta<1/3$ close to $0$ and define the path 
\begin{equation}\label{eq:tautp}
	\tau^t_p\colon [0,1]\to \Co(\Omega_\delta),\quad  (p,t)\in P\times [0,1]
\end{equation}
as follows:
\begin{itemize}
\item on $s\in [0,\eta]$ we follow the straight line segment from $\sigma_p(0)$ to $\tilde\sigma^t_p(0)$;
\vspace{1mm}
\item on $s\in [\eta,1-\eta]$ we follow the path $\tilde \sigma^t_p$ (see \eqref{eq:tildesigma}) reparameterized to
this interval. Explicitly, we take $\tau^t_p(s)=\tilde \sigma^t_p((s-\eta)/(1-2\eta))$;
\vspace{1mm}
\item  on $s\in [1-\eta,1]$ we follow the straight line segment from $\tilde\sigma^t_p(1)$ to $\sigma_p(1)$.
\end{itemize}
Clearly, the path $\tau^t_p$ satisfies conditions (i) and (ii) (with $\tau^t_p$ in place of $\sigma^t_p$).
Since the above modification does not change the integrals  very much, we have that 
\begin{equation}\label{eq:estimate3}
	\left| \int_0^1 \tau^t_p(s)\, ds - \int_0^1 \tilde \sigma^t_p\,ds \right| < \epsilon/4, \quad
	(p,t)\in P\times [0,1]
\end{equation}
provided that the number $\eta>0$ is chosen small enough.

Our next goal is to deform the family of paths $\tau^t_p$ with range in $\Co(\Omega_\delta)$ (see \eqref{eq:tautp})
to a family of paths $\sigma^t_p\colon [0,1]\to\Omega_\delta$ and with approximately the same integrals, 
keeping the deformation fixed on the set
\begin{equation}\label{eq:K}
	K := (P\times [0,1]_t \times \{0,1\}_s) \cup (R \times [0,1]_s) \subset P\times [0,1]_t \times [0,1]_s.
\end{equation}
(Recall that $R\subset P\times [0,1]$ is the set given by \eqref{eq:R}. The subscripts $s$ and $t$ 
in \eqref{eq:K} are included to help the reader understand which variable do we have in mind.)
This will be accomplished by applying a parametric version of Gromov's 
{\em one-dimensional convex integration lemma} (see \cite[Lemma 2.1.7, p.\ 337]{Gromov1973}).
The precise result that we shall use is given by \cite[Theorem 3.4, p.\ 39]{SpringBook}
where the reader can find a complete proof.

Consider the family of $\Cscr^1$ paths $f^t_p\colon [0,1]\to \C^n$ $(p\in P,\ t\in [0,1])$ given by 
\[
	f^t_p(s) = \int_0^s \tau_p^t(\xi) \, d\xi,\quad s\in [0,1].
\]
Set 
\[
	\beta_p^t=\sigma_p\colon [0,1]\to  \Agot_{r_0,r_1} \subset \Omega_\delta,
	\quad (p,t)\in P\times [0,1].
\]
The family $(f^t_p,\beta_p^t)$ $(p\in P,\ t\in [0,1])$ satisfies the hypotheses of 
\cite[Theorem 3.4, p.\ 39]{SpringBook}. Indeed, for every $(p,t)\in P\times [0,1]$ we have that
\[
	\di_s f^t_p(s) = \begin{cases}  \tau_p^t(s) \in \Co(\Omega_\delta), &  s\in [0,1]; \\
				\sigma_p(s)\in \Omega_\delta, & s\in \{0,1\}. 
				\end{cases}
\]
(See also Remark \ref{rem:Spring}.)
Applying the cited theorem to the family $(f^t_p,\beta_p^t)$, keeping 
the deformation fixed on the set $K$ given by \eqref{eq:K}, furnishes a continuous family of paths 
\[
	\tilde\tau_p^t\colon [0,1]\to \Omega_\delta,\quad (p,t)\in P\times [0,1]
\]
satisfying conditions (i), (ii) and also the following estimate:
\begin{equation}\label{eq:estimate4}
	\left| \int_0^1 \tilde\tau^t_p(s)\, ds - \int_0^1 \tau^t_p(s)\, ds \right| < \epsilon/4, \quad
	(p,t)\in P\times [0,1].
\end{equation}

Recall that $\rho \colon \Omega_\delta \to \Agot_*$ is the retraction chosen at the
beginning of the proof. Set
\[
	\sigma_p^t= \rho \circ \tilde\tau_p^t,\quad (p,t)\in P\times [0,1].
\]  
Clearly, $\sigma^t_p(s)=\tilde\tau_p^t(s)=\sigma_p(s)$ for $(p,t,s)\in K$ \eqref{eq:K} since these values 
belong to $\Agot_*$. Thus, $\sigma^t_p$ satisfies conditions (i) and (ii).
Furthermore, the estimate \eqref{eq:estimate1} shows that
\begin{equation}\label{eq:estimate5}
	\left| \int_0^1 \sigma^t_p(s)\, ds - \int_0^1 \tilde\tau^t_p(s)\, ds \right| < \epsilon/4, \quad
	(p,t)\in P\times [0,1].
\end{equation}
By combining the estimates \eqref{eq:estimate2}, \eqref{eq:estimate3}, \eqref{eq:estimate4}
and  \eqref{eq:estimate5} we see that $\sigma_p^t$ also satisfies condition (iii). 
This proves Lemma \ref{lem:CI}. 
\end{proof}

%
%
\begin{remark}[On pairs of parameter sets $Q\subset P$ in Lemma \ref{lem:CI}]  \label{rem:Spring}
We wish to clarify a certain technical point concerning the use of \cite[Theorem 3.4, p.\ 39]{SpringBook}
in the proof of Lemma \ref{lem:CI}. 

In the cited theorem, the author assumes that the parameter sets 
$C$ and $B=C\times [0,1]$ (in his notation) are compact $\Cscr^1$ manifolds.
(Note that $C$ corresponds to our parameter set $P$, while the second factor 
$[0,1]$ is the parameter interval of  the paths $f_p\colon [0,1]\to \C^n$ for $p\in C$.
In  \cite[Theorem 3.4]{SpringBook} the letter $t\in [0,1]$ is used for this parameter;
here we used the letter $s$, reserving $t\in[0,1]$ for the parameter of the homotopy.) 
An inspection of the proof
shows that the assumption of $C$ being $\Cscr^1$ manifolds
can be avoided in our situation. Indeed, the only place in \cite[proof of Theorem 3.4]{SpringBook}
where this assumption is used is on top of page 41 where the author chooses a certain 
cut-off function $\lambda$ of class $\Cscr^1$, 
supported in a small neighborhood of a certain compact set $K$ in $B=C\times [0,1]$ 
and agreeing with $1$ on a smaller neighborhood of $K$. 
(The role of $K$ is that the homotopy should be fixed in a small neighborhood of $K$.
In our proof of Lemma \ref{lem:CI}, $K$ corresponds to the set 
$(P\times \{0,1\})\cup (Q\times [0,1]) \subset P\times [0,1]$.
By including both copies $P\times \{0\}$ and $P\times \{1\}$, we ensure that
the deformations of paths made in the process are fixed near the endpoints
$s=0,1$ of the parameter interval. Furthermore, by also including the parameter 
$t\in [0,1]$ of the homotopy into the picture, the relevant set $K\subset P\times [0,1]_t\times [0,1]_s$ 
is given by \eqref{eq:K}, where $R$ is the set \eqref{eq:R}.)
Our point is that the cut-off function $\lambda$ used in  \cite[proof of Theorem 3.4]{SpringBook}
only needs to be of class $\Cscr^1$ in the $s$ variable (the parameter on the path) 
and its derivative $\di_s \lambda$ must be continuous in all variables $(p,t,s)$. 
Such $\lambda$ obviously exists in our situation if $P$ is an arbitrary compact Hausdorff space.
\qed \end{remark}


\section{Parametric h-principle for the inclusion $\Re \Ngot_*(M,\C^n) \hra \Mgot_*(M,\R^n)$}
\label{sec:PHP1}

In this section, we prove the parametric h-principle with approximation for the inclusion 
\[
	\Re \Ngot_*(M,\C^n)= \{u\in \Mgot_*(M,\R^n):\Flux(u)=0\}  \longhookrightarrow \Mgot_*(M,\R^n)
\]
considered in Theorem \ref{th:WHE1}.

\begin{theorem} \label{th:PHP1}
Assume that $M$ is an open Riemann surface, $Q\subset P$ are compact Hausdorff spaces,  
$D\Subset M$ is a smoothly bounded domain whose closure $\bar D$ is $\Oscr(M)$-convex, 
and $u\colon M\times P\to\R^n$ $(n\ge 3)$ is a continuous map satisfying the following conditions:
\begin{itemize}
\item[\rm (a)] $u_p=u(\cdotp,p)\colon M\to \R^n$ is a nonflat conformal minimal immersion for every $p\in P$;
\item[\rm (b)] $u_p|_{\bar D} \colon \bar D \to \R^n$ has vanishing flux for every $p\in P$;
\vspace{1mm}
\item[\rm (c)] $\Flux(u_p)=0$ for every $p\in Q$.
\end{itemize}
Given a number $\epsilon>0$, there exists a homotopy $u^t \colon M\times P\to \R^n$ 
$(t\in [0,1])$ such that the map $u^t_p := u^t(\cdotp,p)\colon M\to \R^n$ is a nonflat conformal 
minimal immersion for every $(p,t)\in P\times [0,1]$ satisfying the following conditions:
\begin{itemize}
\item[\rm (1)] $u^t_p = u_p$ \ for every $(p,t)\in (P\times \{0\}) \cup (Q\times [0,1])$; 
\vspace{1mm}
\item[\rm (2)]   $|u^t_p(x) - u_p(x)|<\epsilon$ for all $x\in \bar D$ and $(p,t) \in P\times [0,1]$;
\vspace{1mm}
\item[\rm (3)]   $u^t_p|_{\bar D}$ has vanishing flux for every $(p,t) \in P\times [0,1]$;
\vspace{1mm}
\item[\rm (4)]   $\Flux(u^1_p)=0$ for every $p\in P$.
\end{itemize}
\end{theorem}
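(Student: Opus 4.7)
The plan is to derive Theorem \ref{th:PHP1} by induction over a normal exhaustion of $M$, using Lemma \ref{lem:CI} as the basic tool to correct the flux across newly attached handles, together with a parametric Mergelyan-type approximation to convert the resulting generalized conformal minimal immersions on admissible sets into genuine ones. Fix a nowhere vanishing holomorphic $1$-form $\theta$ on $M$ and pass to the Weierstrass data $f^t_p = 2\di u^t_p/\theta \colon M\to \Agot_*$. Choose an exhaustion $\bar D = \bar D_0 \Subset D_1 \Subset D_2 \Subset \cdots$ of $M$ by smoothly bounded $\Oscr(M)$-convex domains, so that $D_{j+1}$ is obtained from $D_j$ either noncritically (no change in homotopy type) or by attaching a single $1$-handle through an embedded arc $\alpha_j \subset D_{j+1}\setminus D_j$ with endpoints on $bD_j$. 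Let $[\gamma_1],[\gamma_2],\ldots$ be homology classes in $M$ such that $[\gamma_1],\ldots,[\gamma_{l_j}]$ forms a basis of the image of $H_1(\bar D_j;\Z)\to H_1(M;\Z)$, with those of index at most $l_0$ supported in $\bar D$.

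The inductive goal is to construct homotopies $u^t_{j,p}\colon M\to\R^n$ through nonflat conformal minimal immersions satisfying $u^0_{j,p}=u_p$; $u^t_{j,p}=u_p$ for $p\in Q$; $u^t_{j,p}|_{\bar D}$ has vanishing flux; $\Flux(u^1_{j,p})([\gamma_i])=0$ for $i\le l_j$; and $|u^t_{j+1,p}(x)-u^t_{j,p}(x)|<\epsilon_j$ for all $x\in \bar D_j$ and $(p,t)\in P\times [0,1]$, for a chosen summable sequence with $\sum\epsilon_j<\epsilon$. The base case $j=0$ is exactly the given data, and the noncritical step is trivial since no new homology generator is introduced. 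In the critical case the new generator $[\gamma_{l_j+1}]$ is represented by $\alpha_j\cdot\beta_j$ for some path $\beta_j\subset D_j$, and the family $\sigma^t_p$ obtained by pulling back $f^t_{j,p}\theta$ along $\alpha_j$ defines a continuous family of paths in $\Agot_*$ indexed by $(p,t)\in P\times [0,1]$. To kill the flux of $u^1_{j,p}$ along $\gamma_{l_j+1}$, one must adjust the imaginary part of $\int_0^1\sigma^1_p\,ds$ by a prescribed amount, while preserving the real part (which is fixed by $u^t_{j,p}|_{D_j}$), leaving the values at $t=0$ alone, and leaving all integrals untouched for $p\in Q$. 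Applying Lemma \ref{lem:CI} with an auxiliary homotopy parameter that linearly interpolates the target integrals from their original values to the flux-correcting ones produces a continuous family of paths in $\Agot_*$ with the correct endpoints and approximately correct integrals; concatenating this deformation with the existing $t$-homotopy yields, for each $(p,t)$, a nonflat generalized conformal minimal immersion on the admissible Runge set $S_j=\bar D_j\cup\alpha_j$ in the sense of Definition \ref{def:generalized}.

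To close the inductive step, one applies a parametric Mergelyan-type approximation theorem for holomorphic maps into the Oka manifold $\Agot_*$ with period matching, adapted from the period-dominating spray technique of \cite[Theorem 2.6]{AlarconForstneric2014IM}, to replace the generalized family on $S_j$ by a continuous family of bona fide nonflat conformal minimal immersions on a neighborhood of $\bar D_{j+1}$, approximating to within $\epsilon_j$ on $\bar D_j$ and preserving all the required periods (which are zero at $t=1$). Extending through the remaining noncritical part of $M$ by the Oka principle produces $u^t_{j+1,p}$. Passing to the limit $j\to\infty$ and summing the errors $\sum\epsilon_j<\epsilon$ yields a homotopy $u^t_p$ satisfying conditions (1)--(4).

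The principal obstacle is executing the parametric Mergelyan-type approximation with exact period matching so that it is continuous in $(p,t)\in P\times [0,1]$, leaves the family strictly unchanged for $p\in Q$, and preserves nonflatness throughout. This demands a relative parametric version of the period-dominating spray construction compatible with the pair $Q\subset P$ and the additional homotopy parameter, together with an argument that nonflatness, being an open condition, persists under the small perturbations involved and can be explicitly ensured in the paths produced by Lemma \ref{lem:CI}.
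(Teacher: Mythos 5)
Your overall strategy is essentially the paper's: recursion over an exhaustion of $M$, Lemma \ref{lem:CI} to correct the flux over a newly attached handle, period-dominating sprays plus the parametric Mergelyan/Oka approximation to pass from generalized to genuine conformal minimal immersions, and error control via $\sum_j\epsilon_j<\epsilon$. The main gap is in your inductive bookkeeping. You require the stage-$j$ objects $u^t_{j,p}\colon M\to\R^n$ to be \emph{globally} defined conformal minimal immersions at every finite stage, and you close the step by ``extending through the remaining noncritical part of $M$ by the Oka principle''. This step is not available: $M\setminus D_{j+1}$ is in general noncompact and contains all the later critical points of the exhaustion, so each remaining handle needs its own flux correction; even in the absence of further critical points, the Oka/Mergelyan machinery only yields approximation on compact sets, not exact extension to a noncompact end, and conformal minimal immersions cannot be modified locally and glued (harmonicity is rigid). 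As stated, closing one inductive step would require the full strength of the theorem being proved. The repair is the organization used in the paper: construct the homotopies $u^t_{p,j}$ only on $\bar D_j$, with conditions $(a_j)$--$(d_j)$ there and with consecutive stages $\epsilon_j$-close on $\bar D_{j-1}$, and obtain the global homotopy only as the locally uniform limit $u^t_p=\lim_j u^t_{p,j}$.

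A second, smaller gap is in the handle step itself. After applying Lemma \ref{lem:CI} you claim that the paths with \emph{approximately} correct integrals already define a generalized conformal minimal immersion on $S_j=\bar D_j\cup\alpha_j$. They do not: when the arc closes up a cycle $C$, the real period $\int_C\Re(f^t_p\theta)$ is prescribed \emph{exactly} by $u^t_p|_{\bar D_j}$ (it must vanish, respectively equal the difference of the endpoint values of $u^t_p$ when the endpoints lie in different components), or else $u^t_p$ is not single valued along $C$; Lemma \ref{lem:CI} only achieves this up to $\epsilon$, and this is unavoidable for general (possibly flat) parameter families. The paper fixes this by an exact correction on the arc before any Mergelyan step: arrange the paths to be nonflat and $t$-independent on a subarc $I_2$, embed them there in a period dominating spray (\cite[Lemma 5.1]{AlarconForstneric2014IM}), and use the implicit function theorem to kill the real periods exactly for all $(p,t)$ and the imaginary periods exactly at $t=1$, keeping everything fixed on $(P\times\{0\})\cup(Q\times[0,1])$. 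You invoke the spray technique only inside the Mergelyan approximation; it is needed already at the level of the paths on the arc, and this is also where the nonflatness hypothesis is genuinely used.
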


Let us explain the content of the theorem in nontechnical terms.
The assumption (b) is equivalent to saying that  the restriction $u_p|_{\bar D}$  is the real part of a 
holomorphic null curve $\bar D \to\C^n$ for every $p\in P$, while (c) 
says that for every $p\in Q$, $u_p\colon M\to\R^n$ is the real part of a globally defined 
holomorphic null curve $F_p\colon M\to\C^n$. The conclusion of the theorem is that we can
deform $u$ through a homotopy $u^t\colon M\times P\to\R^n$  $(t\in [0,1])$ consisting of 
nonflat conformal minimal immersions $u^t_p := u^t(\cdotp,p)\colon M\to \R^n$ such that the
homotopy is fixed for $p\in Q$ (condition (1)), it approximates $u$ uniformly on $\bar D\times P$
for every $t\in [0,1]$ (condition (2)), the restrictions $u^t_p|_{\bar D}$ are real parts of null curves $\bar D\to\R^n$
(condition (3)), and at $t=1$ the family $u^1_p\colon M\to\R^n$ consists of real parts
of null curves $F_p\colon M\to \C^n$ (condition (4)). 

Assuming for a moment that Theorem \ref{th:PHP1} holds, we now give:

\begin{proof}[Proof of Theorem \ref{th:WHE1}]
Let $k\in \Z_+$. Applying Theorem \ref{th:PHP1} with $P=\S^k$ (the real $k$-sphere) 
and $Q=\emptyset$ shows that the map
\begin{equation}
\label{eq:pi_k}
	\pi_k(\Re \Ngot_*(M,\C^n)) \longrightarrow \pi_k( \Mgot_*(M,\R^n)),
\end{equation}
induced by the inclusion, is surjective. Applying Theorem \ref{th:PHP1} with $P=\overline\B^{k+1}$ (the closed ball in $\R^{k+1}$)
and $Q=\S^k=b\B^{k+1}$ shows that the map  \eqref{eq:pi_k}
is also injective. 
\end{proof}

\begin{proof}[Proof of Theorem \ref{th:PHP1}]
Pick a smooth strongly subharmonic Morse exhaustion function $\rho\colon M\to \R$ and exhaust 
$M$ by sublevel sets 
\[
	D_j=\{x\in M\colon \rho(x)<c_j\}, \quad j\in\N
\]
where $c_1<c_2<c_3<\ldots$ is an 
increasing sequence of regular values of $\rho$ such that $\lim_{j\to\infty} c_j=\infty$. 
We may assume in addition that $D_1=D$ is the domain  in Theorem \ref{th:PHP1} 
and each interval $[c_j,c_{j+1}]$ contains at most one critical value of the function $\rho$. 
Let $\epsilon>0$ be as in the theorem.
Pick a sequence $\epsilon_j >0$ with $\sum_{j=1}^\infty \epsilon_j<\epsilon$. Set
\[
	u^t_{p,1} := u_p|_{\bar D_1}, \quad (p,t)\in P\times [0,1].
\]
We shall recursively construct a sequence of homotopies of conformal minimal immersions
\[
	u^t_{p,j} \colon \bar D_j\lra \R^n,\quad (p,t)\in P\times[0,1],\ \  j\in\N
\]	
satisfying the following conditions for every $j=2,3,\ldots$:
\begin{itemize}
%
%
\item[$(a_j)$] $u^t_{p,j} = u_p|_{\bar D_{j}}$ for every $(p,t)\in (P\times \{0\}) \cup (Q\times [0,1])$; 
\vspace{1mm}
\item[$(b_j)$]  $\|u^{t}_{p,j} - u^t_{p,j-1}\|_{\bar D_{j-1}} <\epsilon_j$ for all $(p,t)\in P\times [0,1]$;
\vspace{1mm}
\item[$(c_j)$]  $\Flux(u^t_{p,j}|_{\bar D_{j-1}}) = \Flux(u^t_{p,j-1})$  for every $(p,t)\in P\times [0,1]$;
\vspace{1mm}
\item[$(d_j)$]  $\Flux(u^1_{p,j})=0$ on $\bar D_j$ for every $p\in P$.
\end{itemize}
Note that condition $(d_1)$ holds by the definition of $u^t_{p,1}$, while the other conditions are vacuous for $j=1$.
Clearly, these conditions imply that the limit
\[
	u^t_p=\lim_{j\to \infty} u^t_{p,j}\colon M\lra \R^n, \quad p\in P,\ t\in [0,1]
\]
exists and satisfies the conclusion of Theorem \ref{th:PHP1}. Indeed, $(a_j)$ ensures that 
all homotopies are fixed on the parameter set $(P\times \{0\}) \cup (Q\times [0,1])$ which will give condition (1)
in the theorem. Condition $(b_j)$ ensures that the sequence converges uniformly on compacts 
in $M\times P\times [0,1]$ and the limit $u^t_p$ satisfies condition (2) in the theorem.
Condition (3) follows from $(c_j)$, and condition (4) is a consequence of $(d_j)$.

We shall now describe the recursion. We distinguish two topologically different cases:  
(a) the noncritical case, and (b) the critical case.

\vspace{1mm} 

{\bf (a) The noncritical case.} 
Let $K$ and $L$ be smoothly bounded compact domains 
in $M$ such that $\bar D\subset K\subset \mathring L$  and $K$ is a strong deformation retract of $ L$. 
In the recursive scheme, this corresponds to the case $K=\bar D_j$, $L=\bar D_{j+1}$,
and $\rho$ has no critical values in $[c_j,c_{j+1}]$. However, in the critical case considered
below we shall have to use the noncritical case also for certain noncritical pairs $K\subset  L$ 
that are not sublevel sets of this particular function $\rho$.

Let $u\colon M\times P\to\R^n$ $(n\ge 3)$ be as in Theorem \ref{th:PHP1}, satisfying conditions 
(a), (b) and (c). As before, we shall write $u_p=u(\cdotp,p)\colon M\to \R^n$ for $p\in P$.
Assume also that we are given a continuous family of nonflat conformal minimal immersions 
\begin{equation}\label{eq:utp}
	u^t_p\colon K\lra \R^n,\quad (p,t)\in P\times [0,1]
\end{equation}
satisfying the following conditions:
\begin{itemize}
\item[\rm (a')]  $u^t_p = u_p|_K$ for every $(p,t)\in (P\times \{0\}) \cup (Q\times [0,1])$; 
\vspace{1mm}
\item[\rm (b')]  $u^t_p$ has vanishing flux on $\bar D$ for every $(p,t)\in P\times [0,1]$;
\vspace{1mm}
\item[\rm (c')]  $u^1_p$ has vanishing flux on $K$ for every $p\in P$.
\end{itemize}
Given a number $\epsilon>0$, we shall find a homotopy of conformal minimal immersions 
\begin{equation}\label{eq:tildeupt}
	\tilde u^t_p\colon  L\lra \R^n,\quad (p,t)\in P\times [0,1]
\end{equation}
satisfying the following conditions:
\begin{itemize}
\item[$(\alpha)$]  $\tilde u^t_p = u_p|_L$ for every $(p,t)\in (P\times \{0\}) \cup (Q\times [0,1])$; 
\vspace{1mm}
\item[$(\beta)$]  $\|\tilde u^t_p - u^t_p\|_K <\epsilon$ for every $(p,t)\in P\times [0,1]$;
\vspace{1mm}
\item[$(\gamma)$] $\Flux(\tilde u^t_{p}|_K) = \Flux(u^t_p|_K)$ for every $(p,t)\in P\times [0,1]$.
\end{itemize}
In particular, it follows from properties (b') and $(\gamma)$ that $\tilde u^t_p$ has vanishing flux on $\bar D$ 
for every $(p,t)\in P\times [0,1]$, and $\tilde u^1_p$ has vanishing flux on $K$ for every $p\in P$
in view of properties (c') and $(\gamma)$. Since the set $K$ contains all the topology of $L$, we also get that
\begin{itemize}
\item[$(\delta)$]  $\tilde u^1_p$ has vanishing flux on $L$ for every $p\in  P$.
\end{itemize}
In the recursive scheme used in the proof of Theorem \ref{th:PHP1}, with $K_j=\bar D_j$ and $L=\bar D_{j+1}$,
the family $u^t_p=u^t_{p,j}\colon \overline D_j\to\R^n$ satisfies the inductive hypotheses $(a_j)$, $(b_j)$, $(c_j)$
and $(d_j)$, and we may take the family $u^t_{p,j+1} := \tilde u^t_{p}$ as the new homotopy
of conformal minimal immersions $L=\bar D_{j+1}\to \R^n$. Indeed, conditions
$(\alpha)$--$(\delta)$ for $\tilde u^t_{p}$ exactly correspond to the respective conditions 
$(a_{j+1})$--$(d_{j+1})$ for the next term $u^t_{p,j+1}$ in the recursion.

The construction of a family $\tilde u^t_p$ \eqref{eq:tildeupt} satisfying $(\alpha)$--$(\gamma)$ is  accomplished 
in a similar way as was done in the nonparametric case in \cite[Section 5]{AlarconForstnericLopez2015MZ}, 
but the details are somewhat more involved in the present situation involving parameters; we now explain the details.

Pick a nowhere vanishing holomorphic $1$-form $\theta$ on $M$. 
By classical results, there exists a Runge homology basis $\Bcal=\{\gamma_i\colon i=1,\ldots,l\}$ for $H_1(K;\Z)$, 
i.e., the union $|\Bcal|=\cup_{i=1}^l |\gamma_i|$ of supports of the curves $\gamma_i$ is an 
$\Oscr(K)$-convex subset of $K$, such that
the curves $\Bcal'=\{\gamma_1,\ldots,\gamma_m\}$ for some $m\in \{0,\ldots,l\}$ form a homology basis
for $H_1(\bar D;\Z)$. Note that $\Bcal$ is then also a homology basis for $H_1(L;\Z)$.
We denote by $\Pcal$ the period map associated to $\Bcal$:
\[
	\Pcal(f) =  \left(\int_{\gamma_i} f \theta \right)_{i=1,\ldots,l} \in (\C^n)^l,\qquad  f\in\Acal(K,\Agot_*).
\]
Also, $\Pcal'\colon \Acal(\bar D,\Agot_*)\to (\C^n)^m$ will denote the period map with respect to $\Bcal'$. 

Consider the continuous family of nonflat holomorphic maps 
\[
	f^t_p:=2\di u^t_p/\theta \colon K \longrightarrow \Agot_*,\quad p\in P,\ t\in [0,1].
\] 
Conditions $(\alpha)$--$(\gamma)$ on $u^t_p$ imply the following:
\begin{eqnarray*}
	\Re \Pcal(f^t_p)                  &=& 0,\quad (p,t)\in P \times [0,1];  \\
	\Pcal'(f^t_p|_{\bar D})         &=& 0,\quad (p,t)\in P \times [0,1]; \\
	\Pcal(f^1_p)                       &=&  0,\quad \,p\in P.
\end{eqnarray*}
Since $u^t_p$ is nonflat on $K$ for each $(p,t)\in P\times [0,1]$ by the assumption, 
we can apply \cite[Lemma 5.1]{AlarconForstneric2014IM}  
(see also \cite[Lemma 3.6]{AlarconForstnericCrelle} for the parametric case)
to embed the family $f^t_p$ as the core $f^t_p=f^t_{p,0}$ of a {\em period dominating spray} of holomorphic  maps 
\[
	f^t_{p,\zeta} \colon K \lra \Agot_*, \quad \zeta\in B,\ p\in P,\ t\in [0,1]
\]
depending holomorphically on a parameter $\zeta=(\zeta_1,\ldots,\zeta_N)$ 
in a ball $0\in B \subset \C^N$ for some $N\in \N$.  The period domination property means that the period map 
\begin{equation}\label{eq:period}
	B \ni \zeta\longmapsto \Pcal(f^t_{p,\zeta} )=
	\left(\int_{\gamma_i} f^t_{p,\zeta}  \theta \right)_{i=1,\ldots,l} \in (\C^n)^l,
\end{equation}
associated to the homology basis $\Bcal=\{\gamma_i\}_{i=1,\ldots,l}$ of $H_1(K;\Z)$, is submersive
at $\zeta=0$, i.e., its differential at $\zeta=0$ is surjective for every $(p,t)\in P\times [0,1]$. 

Since $\Agot_*$ is an Oka manifold and $K$ is a deformation retract of $ L$, 
the parametric Oka property with approximation (see \cite[Theorem 5.4.4, p.\ 193]{Forstneric2011})
allows us to approximate the spray $f^t_{p,\zeta}\colon K\to \Agot_*$ 
uniformly on $K$ and uniformly with respect to the parameters $(p,t,\zeta)$ 
(allowing the $\zeta$-ball $B\subset\C^N$ to shrink a little) by a holomorphic spray 
\[
	g^t_{p,\zeta}\colon  L \lra \Agot_*,\quad (p, t) \in P\times [0,1],\ \zeta\in rB
\]
for some $r\in (1/2,1)$. More precisely, $g^t_{p,\zeta}$ is holomorphic on (a neighborhood of) $L$ 
and in $\zeta\in rB$, and it is continuous with respect to $(p,t)\in P\times [0,1]$. 
In the cited theorem, the parameter spaces $Q\subset P$ are Euclidean compacts.
However, since $\Agot_*$ is an elliptic manifold in the sense of Gromov
(see \cite[Definition 5.5.11, p. 203]{Forstneric2011} and \cite[Example 4.4]{AlarconForstneric2014IM}),
we can also apply \cite[Theorem 6.2.2, p.\ 243]{Forstneric2011} that holds for every pair of 
compact Hausdorff parameter spaces $Q\subset P$.

If  the approximation of $f^t_{p,\zeta}$ by $g^t_{p,\zeta}$ is sufficiently close, 
the implicit function theorem gives (in view of the period domination property of the spray 
$f^t_{p,\zeta}$) a continuous map 
\begin{equation}\label{eq:zeta}
	\zeta \colon P\times [0,1] \lra rB \subset\C^N
\end{equation}
that vanishes on the set $(p,t)\in (P\times \{0\})\cup (Q\times [0,1])$ such that the homotopy of holomorphic maps 
\[
	\tilde f^t_p := g^t_{p,\zeta(p,t)} \colon  L\lra \Agot_*,\quad 
	(p,t)\in P\times [0,1]
\] 
satisfies the following period conditions:
\[
	\Pcal(\tilde f^t_p)=\Pcal(f^t_p), \qquad (p,t)\in P \times [0,1].
\] 
We are using the period map $\Pcal$ \eqref{eq:period} for the set $K$ which is a deformation retract of $L$, so it contains all the topology of $L$. In particular, we have that
\begin{eqnarray*}
	\Re \Pcal(\tilde f^t_p) &=& 0,\quad (p,t)\in P \times [0,1];  \\
	\Pcal'(\tilde f^t_p|_{\bar D}) &=& 0,\quad (p,t)\in P \times [0,1]; \\
	\Pcal(\tilde f^1_p)      &=&  0, \quad \, p\in P.
\end{eqnarray*}
Note also that 
\begin{equation*}
	\tilde f^t_p= f^t_p, \quad (p,t)\in (P\times \{0\})\cup (Q\times [0,1])
\end{equation*}
since the function $\zeta=\zeta(p,t)$ (see \eqref{eq:zeta}) vanishes on $(P\times \{0\})\cup (Q\times [0,1])$. 

Assume that the set $K$ (and hence $L$) is connected. Choose a point $x_0\in K$ and set
\[
	 \tilde u^t_p(x) := u^t_p(x_0) + \int_{x_0}^x \Re(\tilde f^t_p\theta),\quad x\in L,\ 
	 (p,t)\in P\times [0,1].
\]
Since the $1$-form $\Re(\tilde f^t_p\theta)$ has vanishing periods for all $(p,t)\in P\times [0,1]$,
the integral is independent of the choice of the path in $L$,
and hence $\tilde u^t_p\colon L\to\R^n$ is a continuous family of conformal minimal immersions.
If $K$ is disconnected, we can apply the same argument on every connected component.
It is immediate that this family satisfies conditions $(\alpha)$--$(\gamma)$.
This closes the induction step in the noncritical case.

\vspace{1mm}

{\bf (b) The critical case.}  Now, $K$ and $L$ are smoothly bounded compact subsets of $M$
such that $\bar D \subset K\subset L$, and $L$  admits a strong deformation retraction onto a compact set 
of the form $S=K\cup E$, where $E$ is an embedded arc in the complement of $K$ 
which is attached with both endpoints to $K$. (In the recursive scheme, this case occurs when
$K=\bar D_j$, $L=\bar D_{j+1}$ and $\rho$ contains a critical point $x_0\in D_{j+1} \setminus \bar D_j$.
The arc $E$ corresponds to the stable manifold of the critical point $x_0$ with respect to the gradient flow.)
We may  assume that $E$ is real analytic and intersects $bK$ transversely at both endpoints. 

Let $u^t_p\colon K\to \R^n$ be as in \eqref{eq:utp}, satisfying conditions (a'), (b') and (c').
It suffices to construct a new homotopy $\tilde u^t_p$   \eqref{eq:tildeupt}
satisfying conditions ($\alpha$)--($\delta$) on a neighborhood of the admissible set 
$S:=K\cup E$; the noncritical case explained above then allows us to extend it from a 
suitable neighborhood of $S$ to $L$, with approximation on $S$, so that all required properties
are satisfied.

There are two topologically different cases to consider.

{\bf Case 1:}  the arc $E$ closes inside the domain $K$ to a Jordan curve $C$ such that 
$E=\overline{C\setminus K}$. This happens when the endpoints of $E$ belong to the same 
connected component of $K$. In this case, the curve $C$ is a new element of the 
homology basis for  $H_1(L;\Z)$.

\vspace{1mm}

{\bf Case 2:} the endpoints of the arc $E$ belong to different connected components of $K$.
In this case, no new element of the homology basis appears, and the number of connected 
components of the domain decreases by one.

\vspace{1mm}

The treatment of both cases is similar; we begin by considering the first one.

Thus, let $C\subset M$ be a real analytic Jordan curve which intersects
$K$ in a closed arc $C_3$.
(As pointed out in \cite{AlarconForstnericCrelle}, real analyticity of $C$ is used merely for the convenience
of exposition; one may equally well work with smooth curves.) Note that the set
\[
	S:= K\cup C = K \cup E \subset M
\] 
is admissible (cf.\ Definition \ref{def:admissible}). 

Recall that $u^0_p=u_p\colon M\to \R^n$ $(p\in P)$ is a continuous family of nonflat conformal minimal immersion.
Consider the continuous family of nonflat smooth maps 
\[
	f_p = (2\di u_p/\theta) |_S  \colon S \lra \Agot_*, \quad p\in P.
\]
Then $(u_p,f_p\theta)\in \GM_*(S)$ $(p\in P)$ is a continuous family of nonflat generalized conformal minimal 
immersions on the set $S$ (see Definition \ref{def:generalized}). 
Similarly, we introduce the family of nonflat  holomorphic maps
\begin{equation}\label{eq:sigma^t_p}
	f^t_p = (2\di u^t_p/\theta)|_K  \colon K \lra \Agot_*, \quad (p,t)\in P\times [0,1].
\end{equation}
Note that $f^0_p=f_p|_K$ for all $p\in P$.
Recall that $\Flux(u^t_p|_K)=0$ for all $(p,t)\in P\times [0,1]$ by the inductive hypothesis.

\smallskip

{\bf Claim:}  there exist continuous families of smooth maps
\begin{equation}\label{eq:handle}
	f^t_p \colon S \lra \Agot_*,\quad u^t_p\colon S \lra \R^n,  \quad (p,t) \in P\times [0,1]
\end{equation}
satisfying the following conditions:
\begin{itemize}
\item[$(\mathfrak a)$] $u^t_p$ and $f^t_p$ agree with the already given maps on $K$ for each $(p,t)\in P\times [0,1]$;
\vspace{1mm}
\item[$(\mathfrak b)$] 
$f^t_p=f_p$ and $u^t_p=u_p|_S$ for each $(p,t)\in (P\times \{0\})\cup (Q\times [0,1])$; 
\vspace{1mm}
\item[$(\mathfrak c)$]
the pair $U^t_p:=(u^t_p,f^t_p\theta)\in \GM_*(S)$ is a generalized conformal minimal immersion on $S=K\cup C$ 
for each $(p,t)\in P\times [0,1]$. In particular, 
\[
	\int_C \Re(f^t_p \theta)=  \int_C  \Re(2\di u^t_p) = \int_C du^t_p =0;
\]  
\item[$(\mathfrak d)$] 
at $t=1$, $\Flux(U^1_p)(C) = \int_C \Im(f^1_p \theta)=0$ for all $p\in P$.

\end{itemize}

Assume for a moment that a family $U^t_p = (u^t_p,f^t_p\theta)\in  \GM_*(S)$ satisfying conditions 
$(\mathfrak a)$--$(\mathfrak d)$ exists. 
We can then complete the induction step as in \cite[proof of Theorem 5.3]{AlarconForstnericLopez2015MZ}. 
Indeed, the cited result says that we can approximate the family of generalized conformal minimal immersions 
$U^t_p=(u^t_p,f^t_p\theta)$ arbitrarily closely in the $\Cscr^1(S)$ topology 
by a continuous family $\wt U^t_p = (\tilde u^t_p,2\di \tilde u^t_p)$, where 
\[
	\tilde u^t_p\colon V \lra \R^n,\quad  (p,t)\in P\times [0,1]
\] 
is a continuous family of conformal minimal immersions in an open neighborhood $V\subset M$ of $S$, such that 
\[
	\tilde u^t_p =  u^t_p \quad \text{for all}\ \ (p,t)\in (P\times \{0\})\cup (Q\times [0,1])
\]
and 
\[
	\Flux(\tilde u^t_p) = \Flux(u^t_p)\ \ \text{on $H_1(S;\Z)$ for all $(p,t)\in P\times [0,1]$.}
\]
In particular, we have that
\[
 	\Flux(\tilde u^1_p)=0\ \  \text{on $H_1(S;\Z)$ for all $p\in P$}.
\] 
We may choose a smaller neighborhood $W\Subset V$ of $K\cup E$ such that $\overline W$ is a 
strong deformation retract of $L$. Hence, this reduces Case 1 (of the critical case) 
to the noncritical case and thereby closes the induction step.

It remains to prove the Claim, i.e., to find a continuous family of pairs $U^t_p = (u^t_p,f^t_p)$ 
as in \eqref{eq:handle}.  To this end, we shall use Lemma \ref{lem:CI}.

We parameterize the closed curve $C\subset S$ by a real analytic map $\gamma\colon [0,3]\to C$ such that
$C=C_1\cup C_2 \cup C_3$, where $C_i=\gamma([i,i+1])$ for $i=0,1,2$ and 
$C_3=C\cap K$. Hence, $E=C_1\cup C_2$. 
We extend the family of maps $f^t_p\colon K\to \Agot_*$  given by \eqref{eq:sigma^t_p}
to $S=K\cup C$ so that the extension is continuous in all variables and it satisfies
properties $(\mathfrak a)$ and $(\mathfrak b)$ for $f^t_p$. Choose a small number $\eta>0$ 
and set 
\[
	I_1=[\eta,1-\eta],\quad I_2=[1+\eta,2-\eta],\quad C'_j=\gamma(I_j)\ \ \text{for $j=1,2$}.
\]
By using a smooth cut-off function in the parameter of the homotopy, 
we can arrange that  $f^t_p $ is independent of $t\in [0,1]$ on $C'_1\cup C'_2$ for each $p\in P$,
so it equals $f_p$ there. These curves are nonflat on $C'_1$ and on $C'_2$ by the assumption on $u_p$,
a property that will be used in the sequel for the construction of period dominating sprays.

Denote the parameter on $[0,3]$ by $s$. Then
\begin{equation}\label{eq:coordinate-s}
	\gamma^*(f^t_p \theta) (s) = f^t_p(\gamma(s))\, \theta(\gamma(s),\gamma'(s)) \, ds = \sigma^t_p(s) \, ds,
\end{equation}	
where the map $\sigma^t_p\colon C \to \Agot_*$ is defined by the above equation.
It suffices to explain how to modify the paths  $\sigma^t_p$ to ensure  
properties $(\mathfrak a)$--$(\mathfrak d)$ for the corresponding pairs $(u^t_p,f^t_p \theta)$. 
Explicitly, we shall modify $\sigma^t_p$ on $I_1 \cup I_2$ in order to get 
\begin{equation}\label{eq:ftp}
	\int_C \Re(f^t_p \theta) = \int_0^3 \Re (\sigma^t_p(s)) \, ds =0,
	\quad (p,t)\in P\times [0,1]
\end{equation}
(see condition $(\mathfrak c)$) and 
\begin{equation}\label{eq:f1p}
	\int_C \Im(f^1_p \theta) =  \int_0^3 \Im(\sigma^t_p(s)) \, ds =0, \quad p\in P
\end{equation}
(see condition $(\mathfrak d)$). The value of $\sigma^t_p$ on  $[0,3]\setminus I_1\cup I_2$ will not change in 
these modifications. 

On  the segment $I_1$, we apply Lemma \ref{lem:CI} to the family of paths $\{\sigma_p|_{I_1}: p\in P\}$
for a small $\epsilon>0$ to get paths 
\[
	\sigma^t_p \colon I_1 \to \Agot_*,\quad p\in P,\ t\in [0,1]
\]
which agree with $\sigma_p$ near the endpoints of $I_1$ and satisfy
\begin{equation}\label{eq:epsilon}
	\left| \int_0^3 \Re (\sigma^t_p(s)) ds  \right| < \epsilon
	\ \ \text{for $(p,t)\in P\times [0,1]$},
	\qquad \left| \int_0^3 \sigma^1_p(s) ds \right| < \epsilon \ \  \text{for $p\in P$}.
\end{equation}
Furthermore, we may take 
\[
	\sigma^t_p=\sigma_p\quad \text{for all}\ \  (p,t) \in (P\times \{0\})\cup (Q\times [0,1])
\]
since the integrals in \eqref{eq:epsilon} vanish for such values of $(p,t)$ by the assumptions.

We shall now change the integrals in \eqref{eq:epsilon} to zero by another deformation 
of our paths that is supported on the segment $I_2=[1+\eta,2-\eta]$. Since $\sigma_p$ 
is nonflat on $I_2$ for every $p\in P$, we can apply \cite[Lemma 5.1]{AlarconForstneric2014IM} 
(see also \cite[Lemma 3.6]{AlarconForstnericCrelle}) in order to embed the family of paths 
$\{\sigma_p|_{I_2}\colon I_2\to \Agot_*,\ p\in P\}$ into a period dominating spray of paths 
\[
	\tau_{p,\zeta} \colon I_2 \lra \Agot_*,\quad  p\in P,\ \zeta\in \B^N
\]
depending holomorphically on a parameter $\zeta \in \B^N\subset \C^N$ for some big $N\in\N$, such that 
\begin{itemize}
\item $\tau_{p,0}=\sigma_p$ for all $p\in P$, and 
\vspace{1mm}
\item $\tau_{p,\zeta}(s)$ is independent of  $\zeta$ for 
all $s\in I_2$ sufficiently near the endpoints of $I_2$. 
\end{itemize}

Assuming that the number $\epsilon>0$ in \eqref{eq:epsilon} was chosen small enough,
the implicit function theorem furnishes a continuous function $\zeta \colon P\times [0,1]\lra \B^N$ 
that vanishes on $(P\times \{0\})\cup (Q\times [0,1])$ such that, setting
\[
	\sigma^t_p(s)=\tau_{p,\zeta(p,t)}(s),\quad s\in I_2,\ (p,t)\in P\times [0,1]
\]
and keeping the already defined values of $\sigma^t_p(s)$ for $s\in [0,3]\setminus I_2$, we have that
\begin{eqnarray}
\label{eq:period0}
	\int_C \Re (\sigma^t_p(s)) \, ds  &=& 0 \quad \text{for $(p,t)\in P\times [0,1]$};   \\
\label{eq:period1}	
	\int_C \Im (\sigma^1_p(s) \, ds &=&   0 \quad  \text{for $p\in P$}.
\end{eqnarray}
Clearly, the associated maps $f^t_p\colon S\to \Agot_*$, given by \eqref{eq:coordinate-s},
satisfy conditions \eqref{eq:ftp} and \eqref{eq:f1p}. Finally, we define the maps $u^t_p\colon C\to \R^n$ 
for $(p,t)\in P\times [0,1]$ by setting
\[
	u^t_p(\gamma(s)) = u^t_p(\gamma(0)) + \int_0^s  \sigma^t_p(\tau) \, d\tau, \quad s\in [0,3]. 
\]
It is clear from the construction that the family $(u^t_p,f^t_p\theta)\in \GM_*(S)$ satisfies the Claim. 

The details in Case 2 (of the critical case) are similar. 
In this case, we do not have any new element of the homology basis 
for the set $L$. Choose a regular parameterization $\gamma\colon [0,1]\to E$ of the arc $E$ that is attached with 
its endpoints $x=\gamma(0)$ and $y=\gamma(1)$ to the set $K$. The period vanishing condition
\eqref{eq:period0} is now replaced by the condition
\[
	\int_E \Re (f^t_p \theta)=\int_0^1 \sigma^t_p(s)\, ds = u^t_p(y)-u^t_p(x),
\]
while the condition \eqref{eq:period1}  becomes irrelevant. 

This complete the induction step and hence the proof of Theorem \ref{th:PHP1}. 
\end{proof}

\section{Parametric h-principle for directed immersions of Riemann surfaces}
\label{sec:Aimmersions}

In this section, we prove the parametric h-principle for immersions of open Riemann surfaces
into $\C^n$ directed by certain conical subvarieties $A\subset \C^n$, in the sense 
that the derivative of the immersion belongs to $A_*=A\setminus \{0\}$; see Theorem  \ref{th:PHP2}.
A crucial condition for this result is that $A_*$ be an Oka manifold. 
The basic h-principle in this context was provided by \cite[Theorem 2.6]{AlarconForstneric2014IM}.
As a corollary, we show that the natural map 
$\Igot_{A,*}(M,\C^n) \to \Oscr(M,A_*)$ from the space of all nondegenerate holomorphic 
immersions $M\to \C^n$ directed by $A$ to the space of all holomorphic maps $M\to A_*$ 
is a weak  homotopy equivalence (see Theorem \ref{th:WHE-A}). These results apply in particular to 
null holomorphic curves in $\C^n$ and give the corresponding part of Theorem \ref{th:WHE2}.

Let $n\ge 3$. Assume that $A\subset \C^n$ is a closed conical complex subvariety of $\C^n$ such that 
$A_*=A\setminus \{0\}$ is nonsingular. By {\em conical}, we mean that $A$ is a union of complex lines passing
through the origin. Any such subvariety $A$ is algebraic by Chow's theorem. 
Without loss of generality for the results to be presented, we may assume that $A$ is irreducible, 
i.e., its regular part $A_*$ is connected, and that $A$ is not contained in any proper complex subspace of $\C^n$. 
The following is \cite[Definition 2.1]{AlarconForstneric2014IM}.

%
%
\begin{definition}\label{def:Aimmersion}
Let $M$ be an open Riemann surface, and let $\theta$ be a nowhere vanishing holomorphic $1$-form on $M$.
A holomorphic immersion $F=(F_1,\ldots, F_n)\colon M\to \C^n$ is said to be {\em directed by $A$}, 
or to be an {\em $A$-immersion}, if the holomorphic map $f=dF/\theta=(f_1,\ldots,f_n)\colon M\to \C^n$
has range in $A_*$.
\end{definition}

If $A$ is the null quadric  \eqref{eq:Agot}, then an $A$-immersion is just a holomorphic null curve. 

Observe that a holomorphic map $f\colon M\to A_*$ determines
an $A$-immersion $F\colon M\to\C^n$ with $dF=f\theta$ if and only if the holomorphic $1$-form
$f\theta$ has vanishing periods, i.e., $\int_\gamma f\theta=0$ for every closed curve $\gamma\subset M$.
In this case, $F(x)=\int^x f\theta$ for $x\in M$. 

%
%
\begin{definition}\label{def:nondegenerate}
A holomorphic map $f\colon M\to A_*$ is {\em nondegenerate}
if the tangent spaces $T_{f(x)} A \subset T_{f(x)}\C^n \cong \C^n$ over all points $x\in M$ span $\C^n$; see \cite[Definition 2.2]{AlarconForstneric2014IM}. 
If $M$ is not connected, we ask that this holds on every connected component.
An $A$-immersion $F\colon M\to \C^n$ is nondegenerate if the map $f=dF/\theta\colon M\to A_*$
is nondegenerate.  
\end{definition}


We shall prove the following parametric h-principle with approximation for directed immersions of 
Riemann surfaces with an Oka directional manifold. 

\begin{theorem} \label{th:PHP2}
Assume that $M$ is an open Riemann surface, $\theta$ is a nowhere vanishing holomorphic $1$-form on $M$,
$D\Subset M$ is a smoothly bounded domain whose compact closure $K:=\bar D$ is $\Oscr(M)$-convex, 
$A\subset \C^n$ is a closed conical complex subvariety of $\C^n$ such that $A_*=A\setminus \{0\}$ is an Oka manifold, 
and $Q\subset P$ are compact sets in a Euclidean space. 

Assume that $f \colon M\times P\to A_*$ is a continuous map satisfying the following conditions:
\begin{itemize}
\item[\rm (a)] $f_p=f(\cdotp,p)\colon M\to A_*$ is a nondegenerate holomorphic map for every $p\in P$;
\vspace{1mm}
\item[\rm (b)] the $1$-form $f_p\theta$ has vanishing periods over all closed curves in $K$ for every $p\in P$;
\vspace{1mm}
\item[\rm (c)] $f_p\theta$ has vanishing periods over all closed curves in $M$ for every $p\in Q$. 
\end{itemize}
Given a number $\epsilon>0$, there exists a homotopy $f^t \colon M\times P\to A_*$ 
$(t\in [0,1])$ such that $f^t_p := f^t(\cdotp,p)\colon M\to A_*$ is a nondegenerate holomorphic map
for every $(p,t)\in P\times [0,1]$ and the following conditions hold:
\begin{itemize}
\item[\rm (1)]  $f^t_p =f_p$ \ for every $(p,t)\in (P\times \{0\}) \cup (Q\times [0,1])$; 
\vspace{1mm}
\item[\rm (2)]  $\|f^t-f\|_{K\times P} <\epsilon$ for all $t \in [0,1]$;
\vspace{1mm}
\item[\rm (3)]  
$f^t_p\theta$ has vanishing periods on $K$ for every $(p,t)\in P\times [0,1]$;
\vspace{1mm}
\item[\rm (4)]  
$f^1_p\theta$ has vanishing periods on $M$ for every $p\in P$.
\end{itemize}
The same result holds if we replace the condition {\em vanishing periods} in parts
{\rm (b), (c), (3), (4)} by the condition  {\em vanishing real periods}. 
\end{theorem}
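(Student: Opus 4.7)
The plan is to mirror the recursive construction used to prove Theorem~\ref{th:PHP1}, exhausting $M$ by an increasing sequence of smoothly bounded Runge sublevel sets $D_1=D\Subset D_2\Subset\cdots$ of a strongly subharmonic Morse exhaustion function, where at most one critical point of the exhaustion lies between consecutive levels. At the $j$-th stage I would produce a continuous family $f^t_{p,j}\colon\bar D_j\to A_*$ of nondegenerate holomorphic maps that (i) equals $f_p|_{\bar D_j}$ on $(P\times\{0\})\cup(Q\times[0,1])$, (ii) approximates $f^t_{p,j-1}$ within a summable error $\epsilon_j$ on $\bar D_{j-1}$, (iii) has the same periods as $f^t_{p,j-1}$ on the closed curves of $\bar D_{j-1}$, and (iv) satisfies $\int_\gamma f^1_{p,j}\theta=0$ for every closed curve $\gamma\subset\bar D_j$. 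Passing to the limit in $j$ yields the desired homotopy $f^t_p$.

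The engine of each inductive step is a \emph{noncritical extension} from a Runge compact $K$ to a strong deformation neighbourhood $L$. Fix a Runge homology basis $\{\gamma_1,\ldots,\gamma_l\}$ of $H_1(K;\Z)$ (which is also a basis of $H_1(L;\Z)$) with $\{\gamma_1,\ldots,\gamma_m\}$ spanning $H_1(\bar D;\Z)$, and let $\Pcal$ be the associated period map. Nondegeneracy of $f^t_p$ on $K$ lets me embed it as the core of a holomorphic spray $f^t_{p,\zeta}\colon K\to A_*$, $\zeta$ in a ball $B\subset\C^N$, whose period map $\zeta\mapsto\Pcal(f^t_{p,\zeta})$ is submersive at $\zeta=0$, via \cite[Lemma~5.1]{AlarconForstneric2014IM} and its parametric version \cite[Lemma~3.6]{AlarconForstnericCrelle}. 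Since $A_*$ is an Oka manifold, the parametric Oka property with approximation (\cite[Theorem~6.2.2]{Forstneric2011}, valid for arbitrary compact Hausdorff parameter pairs) produces an approximation $g^t_{p,\zeta}\colon L\to A_*$ of this spray, agreeing with the original on $(P\times\{0\})\cup(Q\times[0,1])$. The implicit function theorem then provides a continuous $\zeta=\zeta(p,t)$ vanishing on that set, so that $\tilde f^t_p:=g^t_{p,\zeta(p,t)}$ satisfies $\Pcal(\tilde f^t_p)=\Pcal(f^t_p)$, remains nondegenerate, and is close to $f^t_p$ on $K$. Because $K$ carries all of the homology of $L$, this automatically propagates the period conditions (3), (4) to $L$.

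The main obstacle is the \emph{critical step}, where $L$ deformation retracts onto an admissible set $S=K\cup E$ for a real analytic arc $E$ attached to $K$ at its endpoints and, in Case~1, a new Jordan curve $C\subset S$ with $E=\overline{C\setminus K}$ enters the homology of $L$. In the vanishing complex periods setting I would first extend $f^t_p$ continuously from $K$ to $S$, taking $f^t_p=f_p$ on $E$ whenever $(p,t)\in(P\times\{0\})\cup(Q\times[0,1])$, and arranging by a small perturbation inside $A_*$ (harmless since $A_*$ is Oka and nondegeneracy is an open condition) that the restriction to some short subarc $E'\subset E\setminus K$ is nonflat for every $(p,t)$. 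On $E'$ one then applies the $1$-dimensional spray construction of \cite[Lemma~5.1]{AlarconForstneric2014IM} to embed the family into a smooth period-dominating spray of paths in $A_*$, and invokes the implicit function theorem to pick the spray parameter continuously in $(p,t)$ so as to force $\int_C f^1_p\theta=0$ for all $p\in P$, while leaving intermediate periods unconstrained. The resulting generalized family on $S$ is converted to a genuine holomorphic family on a neighbourhood of $S$ by the Mergelyan-type approximation of \cite[Theorem~5.3]{AlarconForstnericLopez2015MZ}, after which the noncritical step extends it to $L$. Case~2, where $E$ joins two distinct components of $K$, introduces no new homology class and is handled by the noncritical step after a straightforward extension across $E$. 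Finally, the vanishing real periods variant proceeds by the identical scheme, with the spray on $E'$ replaced by Lemma~\ref{lem:CI}, exactly as in the proof of Theorem~\ref{th:PHP1}; the critical difficulty throughout is the simultaneous preservation of continuity in $(p,t)$, of nondegeneracy, and of the period equations, which the Oka property of $A_*$ combined with the submersivity of the period map under nondegeneracy makes compatible.
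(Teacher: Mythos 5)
Your overall skeleton agrees with the paper's: the same recursion over a strongly subharmonic Morse exhaustion, and a noncritical step carried out exactly as in Theorem \ref{th:PHP1} via a period dominating spray (\cite[Lemma 5.1]{AlarconForstneric2014IM}), the parametric Oka property of $A_*$, and the implicit function theorem. The genuine gap is in the critical case. You propose to force $\int_C f^1_p\theta=0$ over the newly created loop $C$ using only a period dominating spray of paths on a short subarc $E'$ together with the implicit function theorem. But period domination is an infinitesimal property: the implicit function theorem only produces a continuous parameter $\zeta(p,t)$, vanishing on $(P\times\{0\})\cup(Q\times[0,1])$, that moves the period to values in a small neighborhood of its current value. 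The initial period $\alpha_p=\int_C f_p\theta$ of the extension over $C$ is in general far from $0$ (it vanishes only for $p\in Q$), and nothing guarantees that the spray's period map attains $0$ at all, much less continuously in $(p,t)$ with the required normalizations at $t=0$ and on $Q$. This is precisely why the paper's proof first invokes the analogue of Lemma \ref{lem:CI} (Gromov's one-dimensional convex integration) for paths in $A_*$: after the harmless reduction to the case where $A$ is not contained in a proper complex subspace, one has $\Co(A)=\C^n$ by \cite[Lemma 3.1]{AlarconForstneric2014IM}, and the convex integration lemma deforms the family of paths over the arc, keeping it fixed on $(P\times\{0\})\cup(Q\times[0,1])$ and near the endpoints, so that the $t=1$ periods come within $\epsilon$ of $0$; only then do the period dominating spray and the implicit function theorem kill the remaining small error exactly. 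Without this step your argument does not close.

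Your description of the real-period variant also misstates the roles of the two tools: Lemma \ref{lem:CI} does not replace the spray on $E'$; in both variants the convex integration step (to prescribe the periods approximately) and the spray-plus-implicit-function-theorem step (to correct them exactly) are used in tandem, as in the proof of Theorem \ref{th:PHP1}. A minor point: the perturbation you introduce to make the maps nonflat on a subarc is unnecessary, since nondegeneracy of $f_p$ on $M$ already yields the property needed for the period dominating spray on any real-analytic subarc by the identity principle, exactly as nonflatness of $u_p$ does on the subarcs $C_1'$, $C_2'$ in the proof of Theorem \ref{th:PHP1}.
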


\begin{proof}
We shall use the tools developed in Sections \ref{sec:CI-lemma} and \ref{sec:PHP1}.
The proof is similar to that of Theorem \ref{th:PHP1}; we indicate the essential points and leave out the details.

We may assume that $A$ is not contained in any proper complex subspace of $\C^n$.
By \cite[Lemma 3.1]{AlarconForstneric2014IM}, its convex hull $\Co(A)\subset\C^n$ then 
equals $\C^n$. This gives the analogue of Lemma \ref{lem:CI} for loops in $A_*$. 
Choose an exhaustion of $M$
\[
	K=K_1\subset K_2\subset\cdots\subset \cup_{j=1}^\infty K_j=M
\]
by compact regular sublevel sets $K_j=\{x\in M: \rho(x)\le c_j\}$
of a strongly subharmonic exhaustion function $\rho\colon M\to\R$, with $K_1=\bar D$. 
We recursively build a sequence of homotopies 
\[
	F^t_{p,j} \colon K_j=\bar D_j\lra \C^n,\quad j=1,2,\ldots,
\]
consisting of nondegenerate $A$-immersions,
such that all relevant conditions hold (cf.\ conditions ($\alpha$)--($\gamma$) in the proof
of Theorem \ref{th:PHP1}).  The homotopy $F^t$, satisfying the conclusion of Theorem \ref{th:PHP2}, 
is obtained as the locally uniform limit 
\[
	F^t_p=\lim_{j\to\infty} F^t_{p,j},\quad (p,t)\in P\times [0,1]. 
\]

The noncritical case in the induction step is done exactly as in the proof of Theorem \ref{th:PHP1}.

In the critical case, we begin by extending the continuous family of holomorphic maps
\[
	f^t_{p,j} := dF^t_{p,j}/\theta\colon K_j\lra \Agot_*, \quad (p,t)\in P\times [0,1]
\]
from the set $K_j$ to an embedded arc $E$, attached with endpoints to $K_j$, such that, if $E$
closes inside $K_j$ to a loop $C$, then the periods of the extended maps vanish for $t=1$: 
\[	
	\int_C  f^1_{p,j} \,\theta=0, \quad p\in P.
\]
At the same time, we keep the already given values of these maps for the parameter values $p\in Q$. 
This is accomplished as in the proof of Theorem \ref{th:PHP1} by applying the analogue of Lemma \ref{lem:CI}
for loops in $A_*$. The rest of the proof goes through just as  in Theorem \ref{th:PHP1},
using a period dominating spray with values in $A_*$, Mergelyan's approximation theorem
for maps to $A_*$, etc. (See the paper \cite{AlarconForstneric2014IM} for these techniques.) 
We leave out further details.
\end{proof}

Let us denote by $\Oscr_*(M,A_*)$ the subset of $\Oscr(M,A_*)$ consisting of all nondegenerate maps 
$M\to A_*$ (see Definition \ref{def:nondegenerate}). We have the following general position theorem, 
which we state in the form of parametric h-principle with approximation
for the inclusion $\Oscr_*(M,A_*) \hookrightarrow \Oscr(M,A_*)$.

%
%
\begin{theorem} \label{th:PHP3}
Assume that $M$ is an open Riemann surface,  $K$ is a compact set in $M$, 
$A\subset \C^n$ is a closed conical complex subvariety of $\C^n$ such that $A_*=A\setminus \{0\}$ is an Oka manifold, 
and $Q\subset P$ are compact sets in a Euclidean space. Assume that $f \colon M\times P\to A_*$ is a continuous map 
satisfying the following two conditions:
\begin{itemize}
\item[\rm (a)] $f_p=f(\cdotp,p)\colon M\to A_*$ is a holomorphic map for every $p\in P$;
\vspace{1mm}
\item[\rm (b)] $f_p\in \Oscr_*(M,A_*)$ is a nondegenerate holomorphic map for every $p\in Q$.
\end{itemize}
Given a number $\epsilon>0$, there exists a homotopy $f^t \colon M\times P\to A_*$ 
$(t\in [0,1])$ such that $f^t_p := f^t(\cdotp,p)\in \Oscr(M,A_*)$ 
for every $(p,t)\in P\times [0,1]$ and the following conditions hold:
\begin{itemize}
\item[\rm (1)]  $f^t_p =f_p$ \ for every $(p,t)\in (P\times \{0\}) \cup (Q\times [0,1])$; 
\vspace{1mm}
\item[\rm (2)]  
$f^1_p\in \Oscr_*(M,A_*)$  is nondegenerate for every $p\in P$;
\vspace{1mm}
\item[\rm (3)]  $\|f^t-f\|_{K\times P} <\epsilon$ for every $t \in [0,1]$.
\end{itemize}
If $A_*$ is not assumed to be an Oka manifold, then the same result still
holds if $M$ is a compact bordered Riemann surface.
\end{theorem}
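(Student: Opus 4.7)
The plan is to combine the exhaustion-and-induction technique of Theorems \ref{th:PHP1} and \ref{th:PHP2} with a parametric general position argument to ensure nondegeneracy. Since no period conditions are imposed, the inductive scheme is considerably simpler than the one in Theorem \ref{th:PHP2}; the essential new ingredient is the general position step.

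First, exhaust $M$ by smoothly bounded compact $\Oscr(M)$-convex domains $K\subset K_1\subset K_2\subset\cdots$ with $M=\bigcup_j K_j$, arranged so that each pair $K_j\subset K_{j+1}$ is either noncritical (admitting a strong deformation retraction) or critical (handled via an attached arc on an admissible set), exactly as in Section \ref{sec:PHP1}. Inductively construct continuous families of holomorphic homotopies $f^t_{p,j}\colon K_j\to A_*$ that agree with $f_p$ on $(P\times\{0\})\cup(Q\times[0,1])$, satisfy $f^1_{p,j}\in\Oscr_*(K_j,A_*)$ for every $p\in P$, and obey $\|f^t_{p,j+1}-f^t_{p,j}\|_{K_j}<\epsilon_{j+1}$ for a summable sequence with $\sum\epsilon_j<\epsilon$. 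The noncritical step is carried out by applying the parametric Oka property with approximation for the Oka manifold $A_*$ (\cite[Theorem 5.4.4]{Forstneric2011}, or \cite[Theorem 6.2.2]{Forstneric2011} for general compact Hausdorff parameter pairs, valid since $A_*$ is elliptic). The critical step extends the family over the attached arc by Mergelyan approximation on an admissible set, just as in the proof of Theorem \ref{th:PHP1}. Passing to the locally uniform limit yields $f^t_p\colon M\to A_*$; openness of nondegeneracy combined with fast convergence guarantees that $f^1_p$ is still nondegenerate on all of $M$.

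The new ingredient, invoked at each level, is the following parametric general position lemma: given a continuous family $g_p\colon L\to A_*$ of holomorphic maps on a compact $\Oscr(M)$-convex set $L$, with $g_p$ already nondegenerate for $p$ in a compact subset $Q'\subset P$, one can $\epsilon$-approximate it by holomorphic maps $\tilde g_p\colon L\to A_*$ with $\tilde g_p=g_p$ for $p\in Q'$ and $\tilde g_p$ nondegenerate for every $p\in P$. One embeds $g_p=h_{p,0}$ as the center of a holomorphic spray $h_{p,\zeta}\colon L\to A_*$ with parameter $\zeta\in B\subset\C^N$, constructed so that $h_{p,\zeta}$ is constant in $\zeta$ near $Q'$ (via a cut-off in $p$) and so that for sufficiently many points $x_1,\dots,x_k\in L$ the evaluation $\zeta\mapsto(h_{p,\zeta}(x_1),\dots,h_{p,\zeta}(x_k))$ is submersive at $\zeta=0$; such sprays exist by the argument of \cite[Lemma 5.1]{AlarconForstneric2014IM}, using that $A$ is not contained in any proper linear subspace of $\C^n$ and that $A_*$ is smooth. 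For each fixed $p$, the set of $\zeta\in B$ for which the tangent spaces $T_{h_{p,\zeta}(x_i)}A$ fail to span $\C^n$ is contained in a proper complex subvariety of $B$; a continuous selection $\zeta\colon P\to B$ vanishing on $Q'$ and avoiding these bad fibers is obtained by a standard partition-of-unity/transversality argument, using that $P$ sits in a Euclidean space. Setting $\tilde g_p:=h_{p,\zeta(p)}$ completes the step.

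The main obstacle I anticipate is precisely this parametric selection: producing a \emph{continuous} section $\zeta(p)$ that simultaneously vanishes on $Q'$, respects the approximation bound, and lands in the complement of a subvariety that varies with $p$. This is the only place where the Euclidean hypothesis on $Q\subset P$ is used, via Whitney-type extension of $\zeta\equiv 0$ on $Q'$ combined with a parametric Sard argument. For the final statement, if $M$ is a compact bordered Riemann surface the exhaustion and all Oka-extension steps become unnecessary: the general position lemma, combined with Mergelyan approximation on $M$ itself, produces the conclusion without any Oka hypothesis on $A_*$.
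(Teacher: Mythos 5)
The step you yourself flag as the main obstacle is indeed a genuine gap, and the tools you propose for it would not close it. A continuous $p$-dependent selection $\zeta(p)\in B$ avoiding the $p$-varying bad sets $\Sigma_p\subset B$ cannot be obtained by a partition-of-unity gluing of local choices: the complements $B\setminus\Sigma_p$ are not convex, so convex combinations of local selections may well land in $\Sigma_p$. Nor does a Sard/genericity argument for a single constant $\zeta$ follow from evaluation-submersivity at finitely many points alone: that only makes each $\Sigma_p$ a proper analytic subset of $B$, i.e.\ of real codimension $2$, so the total bad set in $P\times B$ can have real codimension as small as $2$ and its projection to $B$ may cover all of $B$ as soon as $\dim P\geq 2$ (and in the application to Theorem \ref{th:WHE-A} the parameter space is a ball of arbitrary dimension). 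What is actually needed, and what the paper does, is to build the deformation as a composition of flows of finitely many holomorphic vector fields spanning $TA_*$ -- complete fields when they exist (e.g.\ for $A=\Agot$), local flows over a neighborhood of $K$ in general -- with coefficient functions $h_j(x,p)$, holomorphic in $x$ and vanishing for $p\in Q$, chosen so that degeneracy imposes conditions of arbitrarily high codimension on $\zeta$; then the transversality argument of \cite[proof of Theorem 3.2\,(a)]{AlarconForstneric2014IM} yields a single generic $\zeta$, as small as desired, that works for all $p\in P$ simultaneously, and the homotopy is simply $f^t=\Psi(\cdot,\cdot,t\zeta)$, which is automatically fixed at $t=0$ and for $p\in Q$.

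The surrounding architecture of your proposal is also much heavier than necessary, since no period conditions are imposed: there is no need for an exhaustion with critical/noncritical cases, admissible sets, or Mergelyan approximation. Once nondegeneracy at $t=1$ is achieved over (a neighborhood of) the compact set $K$, a single application of the parametric Oka property with approximation for the Oka target $A_*$ produces a holomorphic homotopy on all of $M$ agreeing with the data at $t=0$ and for $p\in Q$; if the approximation on $K\times P\times[0,1]$ is close enough, nondegeneracy of $f^1_p$ on $K$ -- hence on $M$, since nondegeneracy only requires the tangent spaces over some subset to span $\C^n$ -- persists. This is how the paper concludes, and it also makes the last assertion transparent: when $M$ is a compact bordered Riemann surface, the local-flow construction over $M$ itself already gives the statement, with no Oka hypothesis on $A_*$. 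Your correct observation that nondegeneracy is open and survives the limit is the one ingredient of your inductive scheme that the paper's shortcut also relies on.
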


\begin{proof}
We may assume that $K$ is $\Oscr(M)$-convex and has nonempty interior.
Consider first the case when there exist finitely many $\C$-complete holomorphic vector fields
$V_1,\ldots, V_m$ tangent to $A_*$ which span the tangent space of $A_*$ at every point.
(This holds if $A$ is the null quadric $\Agot$, cf.\ \cite[Example 4.4]{AlarconForstneric2014IM}.)
Consider a map $\Psi\colon M\times P \times \C^N\to A_*$ of the form
\begin{equation}
\label{eq:Psi}
	\Psi(x,p,\zeta)=\phi^1_{\zeta_1 h_1(x,p)} \circ \cdots  \circ \phi^N_{\zeta_{N} h_{N}(x,p)} (f(x)),
\end{equation}
where $N$ is a large integer, $(x,p)\in M\times P$, $\zeta\in \C^N$, every $\phi^j_\zeta$ 
is the flow of one of the vector fields $V_1,\ldots, V_m$ (which may appear with repetitions),
and the functions $h_j\in \Cscr(M\times P)$ are such that $h_{j}(\cdotp,p)\in \Oscr(M)$ 
for every $p\in P$ and $h_{j}(\cdotp,p)=0$ for every $p\in Q$. A suitable choice of the functions 
$h_j$ ensures (by a standard transversality argument) that for a generic choice of 
$\zeta\in\C^N$ the homotopy
\begin{equation}\label{eq:ft}
	f^t :=\Psi(\cdotp,\cdotp,t\zeta)\colon M\times P\to A_*,\quad  t\in [0,1]
\end{equation}
has the stated properties. The approximation in (3) is achieved by choosing the point
$\zeta$ close to $0$. The details are similar as in 
\cite[proof of Theorem 3.2 (a)]{AlarconForstneric2014IM}, except that the
present situation is simpler since we need not fulfil any period conditions.

In general, the tangent bundle of $A_*$ is spanned by finitely many
holomorphic vector fields (not necessarily complete) in view of Cartan's theorem A.
By using their local holomorphic flows, we can obtain a deformation family \eqref{eq:Psi}
with the desired properties over the compact subset $K\subset M$ and with the parameter
$\zeta$ in a small ball $B\subset \C^N$. This furnishes a homotopy  \eqref{eq:ft} 
defined on $K\times P$ such that every map $f^1_p\colon K\to A_*$ $(p\in P)$ 
is nondegenerate on $K$. Since $A_*$ is an Oka manifold, the parametric Oka property 
with approximation (see \cite[Theorem 5.4.4]{Forstneric2011})
shows that the homotopy $(f^t)_{t\in[0,1]}$ can be approximated uniformly on $K\times P\times [0,1]$ 
by a homotopy $F^t \colon M\times P\to A_*,\  t\in [0,1]$ such that every map 
$F^t(\cdotp,p)\colon M\to A_*$ is holomorphic and the two homotopies agree at $t=0$
and for the parameter values $q\in Q$. If the approximation 
is sufficiently close, then clearly the map $F^1_p\colon M\to A_*$ is nondegenerate
for every $p\in P$. 
\end{proof}

The following is an immediate corollary to Theorem \ref{th:PHP3}.

\begin{corollary} \label{cor:WHEforOstar}
Assume that $M$ is an open Riemann surface and $A\subset \C^n$ is a closed conical subvariety
of $\C^n$ such that $A_*=A\setminus \{0\}$ is an Oka manifold. Then the inclusion 
\begin{equation}\label{eq:Ostar}
	\Oscr_*(M,A_*) \longhookrightarrow \Oscr(M,A_*)
\end{equation}
is a weak homotopy equivalence.
\end{corollary}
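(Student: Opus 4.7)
The plan is to deduce the corollary directly from Theorem \ref{th:PHP3} by unpacking what it means for the induced maps on homotopy groups to be bijective. Since Theorem \ref{th:PHP3} gives a parametric h-principle for precisely the pair of spaces in question, the argument is the standard one: take $P$ to be a sphere (resp.\ a ball) to get surjectivity (resp.\ injectivity) of $\iota_* \colon \pi_k(\Oscr_*(M,A_*)) \to \pi_k(\Oscr(M,A_*))$.

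For surjectivity, I fix a base point $f_0 \in \Oscr_*(M,A_*)$ and a class in $\pi_k(\Oscr(M,A_*),f_0)$, represented by a pointed continuous map $f \colon \S^k \to \Oscr(M,A_*)$ with $f(p_0)=f_0$. Viewing $f$ as a continuous map $M\times \S^k \to A_*$ holomorphic in the first variable, I apply Theorem \ref{th:PHP3} with $P=\S^k$, $Q=\{p_0\}$, any compact $K\subset M$, and any $\epsilon>0$; hypothesis (a) holds by construction and hypothesis (b) holds since $f_0$ is nondegenerate. The resulting homotopy $f^t$ fixes the base point and ends at a family $f^1$ of nondegenerate holomorphic maps. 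Thus $[f]$ lies in the image of $\iota_*$.

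For injectivity, I take a pointed map $g\colon \S^k \to \Oscr_*(M,A_*)$ representing a class in $\pi_k(\Oscr_*(M,A_*),f_0)$ that becomes null-homotopic after postcomposition with $\iota$. Such a null-homotopy is a continuous family $G\colon M\times \overline\B^{k+1}\to A_*$, holomorphic in the first variable, with $G|_{M\times \S^k}=g$ and $G(\,\cdotp,p_*)=f_0$ for the centre $p_* \in \overline\B^{k+1}$. Applying Theorem \ref{th:PHP3} now with $P=\overline\B^{k+1}$ and $Q=\S^k\cup\{p_*\}$, hypothesis (b) holds on $Q$ because $g$ takes values in $\Oscr_*$ and $f_0$ is nondegenerate. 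The theorem produces a homotopy $G^t$ fixed on $Q$ and ending with $G^1$ taking values in $\Oscr_*(M,A_*)$. This exhibits a null-homotopy of $g$ inside $\Oscr_*(M,A_*)$, so $\iota_*$ is injective. Since this works for every $k\ge 0$ and every choice of base point, and the same argument with $k=0$ shows a bijection on path components, $\iota$ is a weak homotopy equivalence.

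There is essentially no obstacle here beyond bookkeeping: all the content is already packaged in Theorem \ref{th:PHP3}, and the only care required is to include the base point in the parameter set $Q$ so that the homotopies produced by that theorem are pointed. The approximation condition (3) of Theorem \ref{th:PHP3} plays no role in this particular deduction; it will of course be indispensable for the stronger statements about strong deformation retracts in the finite-topological-type setting.
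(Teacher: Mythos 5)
Your argument is correct and is exactly the deduction the paper intends: the paper states the corollary as an immediate consequence of Theorem \ref{th:PHP3}, and the sphere/ball pattern with $P=\S^k$, $Q$ a base point (surjectivity) and $P=\overline\B^{k+1}$, $Q\supset\S^k$ (injectivity) is the same scheme used there to derive Theorems \ref{th:WHE1} and \ref{th:WHE-A} from the corresponding parametric h-principles. Your bookkeeping (including the base point in $Q$, checking hypothesis (b) on $Q$) is fine, so there is nothing to add.
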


Let us denote by 
\[
	\Igot_{A,*}(M,\C^n)
\]
the space of all nondegenerate $A$-immersions $M\to\C^n$ with the compact-open topology (see Definition \ref{def:nondegenerate}).
By \cite[Lemma 2.3]{AlarconForstnericLopez2015MZ}, a holomorphic null curve $M\to\C^n$ is nondegenerate 
if and only if it is nonflat. Thus, 
\[
	\Igot_{\Agot,*}(M,\C^n) = \Ngot_*(M,\C^n)
\]
is the space of all nonflat null holomorphic curves $M\to\C^n$.
(Note that, in \cite{AlarconForstnericLopez2015MZ}, the space of nonflat holomorphic null curves
is denoted by $\Ngot_{\mathrm{nf}}(M,\C^n)$, and likewise for the space of conformal minimal
immersions. The definition of nondegeneracy in \cite[Definition 2.2]{AlarconForstnericLopez2015MZ} 
is different from our Definition \ref{def:nondegenerate} and will not be used here.)

%
%
%
%
\begin{theorem}\label{th:WHE-A}
{\rm (The weak homotopy equivalence principle for $A$-immersions.)} 
Let $M$ be an open Riemann surface, and let $A\subset \C^n$ be  a closed
conical complex subvariety of $\C^n$ such that $A_*=A\setminus \{0\}$ is an Oka manifold. 
Fix a nowhere vanishing holomorphic $1$-form $\theta$ on $M$. Then the map
\[
	\Igot_{A,*}(M,\C^n) \longrightarrow \Oscr(M,A_*),\quad  F\longmapsto dF/\theta
\]
is a weak homotopy equivalence.
\end{theorem}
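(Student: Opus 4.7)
The plan is to deduce the theorem from the parametric h-principles with approximation already at our disposal, namely Theorems \ref{th:PHP2} and \ref{th:PHP3}. Following the same scheme used to deduce Theorem \ref{th:WHE1} from Theorem \ref{th:PHP1}, it suffices to verify the following lifting property for every compact parameter pair $(P,Q)$ of the form $(\S^k,\emptyset)$ or $(\overline{\B}^{k+1},\S^k)$ with $k\geq 0$: given a continuous family of holomorphic maps $g\colon M\times P\to A_*$ and a continuous family of nondegenerate $A$-immersions $F\colon M\times Q\to\C^n$ with $dF_p/\theta = g_p$ for each $p\in Q$, we must produce a continuous extension $\hat F\colon M\times P\to\C^n$ of $F$ to nondegenerate $A$-immersions together with a homotopy in $\Oscr(M,A_*)$ from $g$ to $d\hat F/\theta$ that is stationary on $Q$. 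Surjectivity of $\pi_k(\Igot_{A,*}(M,\C^n))\to\pi_k(\Oscr(M,A_*))$ corresponds to $(P,Q)=(\S^k,\emptyset)$ and injectivity to $(P,Q)=(\overline{\B}^{k+1},\S^k)$.

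The construction of the lift will proceed in two stages. First I would apply Theorem \ref{th:PHP3} to the family $g$ with the pair $(P,Q)$. The nondegeneracy hypothesis on $Q$ is met because $g_p=dF_p/\theta$ and $F_p$ is nondegenerate by assumption (Definition \ref{def:nondegenerate}). This yields a homotopy in $\Oscr(M,A_*)$, stationary on $Q$, from $g$ to a family $g^{(1)}$ of nondegenerate holomorphic maps $M\to A_*$ on all of $P$. Next I would take $K=\bar D\subset M$ to be a small closed coordinate disc, so that $H_1(K;\Z)=0$ and the vanishing-period condition over closed curves in $K$ is automatic for any holomorphic $1$-form. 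The family $g^{(1)}$ then satisfies all the hypotheses of Theorem \ref{th:PHP2} with the same pair $(P,Q)$: nondegeneracy on $P$, vacuous period vanishing over $K$ on $P$, and genuine period vanishing of $g^{(1)}_p\theta=dF_p$ over $M$ for $p\in Q$. Applying Theorem \ref{th:PHP2} gives a further homotopy, still stationary on $Q$ and through nondegenerate maps at every intermediate time, ending at a family $g^{(2)}\colon M\times P\to A_*$ such that $g^{(2)}_p\theta$ has vanishing periods over every closed curve of $M$ for every $p\in P$.

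To finish, I would fix a base point $x_0\in M$, define $c\colon Q\to\C^n$ by $c_p=F_p(x_0)$, and use Tietze's extension theorem (valid because $\C^n$ is an absolute retract and $Q$ is closed in the normal space $P$) to continuously extend $c$ to $P$. Setting $\hat F_p(x):=c_p+\int_{x_0}^x g^{(2)}_p\theta$ then yields the required continuous family of nondegenerate $A$-immersions extending $F$, since on $Q$ we have $g^{(2)}_p=g_p=dF_p/\theta$ and $\hat F_p(x_0)=F_p(x_0)$. Concatenating the two homotopies constructed above provides the desired homotopy from $g$ to $d\hat F/\theta=g^{(2)}$ relative to $Q$, which completes the proof. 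The only real obstacles are handled inside Theorems \ref{th:PHP2} and \ref{th:PHP3}; the sole delicate point on the outside is that the nondegeneracy produced by Theorem \ref{th:PHP3} must persist through the deformation of Theorem \ref{th:PHP2}, which is already built into the conclusion of the latter since its entire homotopy consists of nondegenerate holomorphic maps.
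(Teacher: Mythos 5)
Your proposal is correct and follows essentially the same route as the paper: the paper first uses Theorem \ref{th:PHP3} (packaged as Corollary \ref{cor:WHEforOstar}) to handle nondegeneracy, then applies Theorem \ref{th:PHP2} with the pairs $(P,Q)=(\S^k,\emptyset)$ and $(\overline\B^{k+1},\S^k)$ to kill the periods rel $Q$, and finally integrates $f^1_p\theta$ from a base point to produce the family of $A$-immersions. The only differences are organizational (you run both deformations inside a single lifting argument and adjust the additive constants by a Tietze extension instead of normalizing at the base point), which does not change the substance of the argument.
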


Since the inclusion $\Oscr(M,A_*) \hookrightarrow \Cscr(M,A_*)$ is a weak homotopy
equivalence when $A_*$ is an Oka manifold (cf.\ \cite[Corollary 5.4.8]{Forstneric2011}), 
Theorem \ref{th:WHE-A} is equivalent to saying  that the map 
\[
	\Igot_{A,*}(M,\C^n) \lra \Cscr(M,A_*), \quad F\longmapsto dF/\theta
\]
is a weak homotopy equivalence.  Note that Theorem \ref{th:WHE-A} generalizes 
\cite[Theorem 2.6]{AlarconForstneric2014IM}, the latter result 
providing the basic h-principle in this context. 

\begin{proof}[Proof of Theorem \ref{th:WHE-A}] 
Note that the map $F\to dF/\theta$ takes $\Igot_{A,*}(M,\C^n)$ into $\Oscr_*(M,A_*)$.
In view of Corollary \ref{cor:WHEforOstar}, we only need to show that it induces 
a weak homotopy equivalence $\Igot_{A,*}(M,\C^n) \to \Oscr_*(M,A_*)$, 
i.e., the induced map 
\begin{equation}\label{eq:pik}
	\pi_k(\Igot_{A,*}(M,\C^n))  \longrightarrow \pi_k(\Oscr_*(M,A_*))
\end{equation}
is bijective for every $k\in\Z_+$. 
We proceed as in the proof of Theorem \ref{th:WHE1}.
We begin by proving that the map \eqref{eq:pik} is surjective.
Let $P=\S^k$ be the real $k$-sphere and $Q=\emptyset$. A map $f\colon P\to \Oscr_*(M,A_*)$
is naturally identified with a map $f\colon M\times P \to A_*$ such that 
$f_p=f(\cdotp,p)\in \Oscr_*(M,A_*)$ for every $p\in P$. 
Theorem \ref{th:PHP2} shows that we can deform the family $(f_p)_{p\in P}$ in $\Oscr_*(M,A_*)$
to a  family  $(f^1_p)_{p\in P}$ in $\Oscr_*(M,A_*)$ 
such that the $1$-form $f^1_p\theta$ has vanishing periods for every $p\in P$. 
By choosing a base point $x_0\in M$ and setting 
\begin{equation}\label{eq:integrals}
	F_p(x)=\int_{x_0}^x  f^1_p\theta,\qquad x\in M,\ \ p\in P
\end{equation}
we obtain a continuous family of nondegenerate $A$-immersions $F_p\colon M\to \C^n$ $(p\in P)$. 
This proves that the map \eqref{eq:pik} is surjective.

It remains to shows that the map \eqref{eq:pik} is also injective.
Let $P=\overline\B^{k+1}$ be the closed ball in $\R^{k+1}$ and $Q=\S^k=b\B^{k+1}$.
Consider a continuous family of nondegenerate holomorphic maps $f_p\colon M\to A_*$ $(p\in P)$ 
such that, for every $p\in Q$, the $1$-form $f_p \theta$ has vanishing periods over all closed curves in $M$.
(The latter property implies that the integral $\int^x  f_p\theta$ $(x\in M)$ 
is a nondegenerate $A$-immersion for every $p\in Q$.)
By Theorem \ref{th:PHP2} we can deform the family $(f_p)_{p\in P}$ in $\Oscr_*(M,A_*)$
to a family $(f^1_p)_{p\in P}$ in $\Oscr_*(M,A_*)$ by a deformation that is fixed for $p\in Q$ 
and such that $f^1_p\theta$ has vanishing periods on $M$ for all $p\in P$. Their integrals
given by \eqref{eq:integrals}  then form a continuous family of nondegenerate $A$-immersions.
\end{proof}

\begin{proof}[Proof of Theorem  \ref{th:WHE2}]
Since the null quadric $\Agot_*$ \eqref{eq:Agot} is an Oka manifold, 
Theorem \ref{th:WHE-A} includes the second claim in Theorem \ref{th:WHE2} as a special case. 
The first claim concerning the map $\Mgot_*(M,\R^n)\to \Oscr(M,\Agot_*)$
then follows from the observation that the maps $\phi$, $\psi$ in the diagram \eqref{eq:diagram} are 
weak homotopy equivalences at the same time.
\end{proof}


\section{Strong parametric h-principles for sources of finite topological type}
\label{sec:strong-h-principle}

Let $M$ be an open Riemann surface and $n\geq 3$.  We might as well assume that $M$ is connected.  
By now we know that the maps in the commuting diagram
\begin{equation} \label{eq:diagram2}
\xymatrix{
	\Ngot_*(M,\C^n)  \ar[r]^\phi \ar[d]_\Re  &  \Oscr_*(M,\Agot_*)  \ar@{^{(}->}[r]^{j_1}  & \Oscr(M,\Agot_*) \ar@{^{(}->}[r]^{j_2} & \Cscr(M,\Agot_*)\\ 
	\Re\Ngot_*(M,\C^n)   \ar@{^{(}->}[r]^\iota    &  \Mgot_*(M,\R^n)  \ar[u]_\psi &  &
}
\end{equation}
are weak homotopy equivalences.  Recall that all these spaces carry the compact-open topology.
As noted earlier, the left vertical map is a homotopy equivalence by continuity of the Hilbert transform that 
takes $u\in\Re\Ngot_*(M,\C^n)$ to its harmonic conjugate $v$ with $v(p)=0$, where $p\in M$ is any chosen base point.  

In this final section, we assume that $M$ has finite topological type.  This means that the fundamental group of $M$ is finitely generated or, in other words, that $M$ has the homotopy type of a \textit{finite} wedge of circles.  Equivalently, by a theorem of Stout \cite[Theorem 8.1]{Stout1965}, $M$ can be obtained from a compact Riemann surface by removing a finite number of mutually disjoint points and closed discs.  Thus, equivalently, $M$ has a strictly subharmonic Morse exhaustion with finitely many critical points.

By \cite[Theorem 9]{Larusson2015}, when $M$ has finite topological type, $\Oscr(M,\Agot_*)$ is an absolute neighborhood retract (in the category of metrizable spaces).  This relies on $\Agot_*$ being Oka.  Also, $\Cscr(M,\Agot_*)$ is an ANR.  This follows from work of Milnor and of Smrekar and Yamashita \cite{SmrekarYamashita2009} and was formulated as \cite[Proposition 7]{Larusson2015}.  As an open subspace of an ANR, $\Oscr_*(M,\Agot_*)$ is an ANR.  

An ANR has the homotopy type of a CW complex. By a theorem of Whitehead  \cite[p.~74]{May1999},
a weak homotopy equivalence between CW complexes is a homotopy equivalence.
Hence, as weak homotopy equivalences between ANRs, the inclusions $j_1$ and $j_2$ are homotopy equivalences.
In fact, $j_2$ is the inclusion of a strong deformation retract \cite[Theorem 1]{Larusson2015}.  Namely, the inclusion of a closed subspace is a cofibration (in the sense of Hurewicz) when both spaces are ANRs.  Being a cofibration and a homotopy equivalence, $j_2$ is the inclusion of a strong deformation retract.

We conclude the paper by proving that when $M$ has finite topological type, the maps $\iota$, $\phi$ and $\psi$ (the latter suitably restricted) are inclusions of strong deformation retracts.  In particular they are homotopy equivalences.  In addition to the theory of ANRs, recent results in \cite{AlarconForstneric2014IM} and \cite{AlarconForstnericLopez2015MZ} on certain spaces of maps being Banach manifolds and therefore locally contractible play a key role.
For spaces of holomorphic maps from strongly pseudoconvex Stein domains to complex manifolds, 
such results first appeared in \cite{Forstneric2007}.  
We need to open up the proofs of these results and develop them further (see Lemma \ref{lem:contractible}).

%
%
\begin{theorem} \label{th:anr}
Let $M$ be a connected open Riemann surface of finite topological type.  Let $n\geq 3$.  
Then the metrizable spaces $\Mgot_*(M,\R^n)$, $\Ngot_*(M,\C^n)$ and $\Re\Ngot_*(M,\C^n)$ are 
absolute neighborhood retracts.
\end{theorem}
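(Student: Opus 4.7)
The plan is to realize each of the three spaces as a neighborhood retract of the ANR $\Oscr_*(M,\Agot_*)$ (up to a trivial Euclidean factor), and then to invoke Hanner's theorem, by which a metrizable space that is locally ANR is itself ANR.

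Metrizability is automatic in each case: with the compact-open topology, each of our three spaces sits inside the metrizable space of continuous maps from the second countable locally compact surface $M$ into a Euclidean space. To set up the retract structure, fix a base point $x_0 \in M$ and a nowhere vanishing holomorphic $1$-form $\theta$ on $M$. Since $M$ has finite topological type, $H_1(M;\Z)\cong\Z^l$ with $l<\infty$; fix a basis of closed curves $\gamma_1,\ldots,\gamma_l$ and form the complex period map
\[
 \mathcal{P}\colon \Oscr_*(M,\Agot_*) \lra (\C^n)^l, \qquad f \longmapsto \Bigl(\int_{\gamma_i} f\theta\Bigr)_{i=1,\ldots,l},
\]
together with its real part $\Re\mathcal{P}\colon \Oscr_*(M,\Agot_*)\to(\R^n)^l$. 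Weierstrass integration from $x_0$ yields homeomorphisms $\Mgot_*(M,\R^n)\cong\R^n\times(\Re\mathcal{P})^{-1}(0)$, $\Re\Ngot_*(M,\C^n)\cong\R^n\times\mathcal{P}^{-1}(0)$, and $\Ngot_*(M,\C^n)\cong\C^n\times\mathcal{P}^{-1}(0)$, the Euclidean factor recording the value at $x_0$. Since nondegeneracy is an open condition, $\Oscr_*(M,\Agot_*)$ is open in the ANR $\Oscr(M,\Agot_*)$, hence itself an ANR, so it suffices to show that $\mathcal{P}^{-1}(0)$ and $(\Re\mathcal{P})^{-1}(0)$ are neighborhood retracts of it.

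Given $f_0\in\mathcal{P}^{-1}(0)$, nondegeneracy permits the construction, via \cite[Lemma 5.1]{AlarconForstneric2014IM}, of a continuous period-dominating family $\widetilde{\Phi}\colon U\times B\to \Oscr_*(M,\Agot_*)$, where $U$ is a neighborhood of $f_0$ and $B\subset\C^N$ a ball with $0\in B$, satisfying $\widetilde{\Phi}(f,0)=f$ and such that, uniformly for $f$ in $U$, the map $\zeta\mapsto \mathcal{P}(\widetilde{\Phi}(f,\zeta))$ is submersive at $\zeta=0$. Finite topological type enters crucially here: the target $(\C^n)^l$ of $\mathcal{P}$ is finite-dimensional, so the ordinary implicit function theorem, applied to the finite-dimensional equation $\mathcal{P}(\widetilde{\Phi}(f,\zeta))=0$ with $f$-dependent continuous data, yields (after shrinking $U$) a continuous function $\zeta\colon U\to B$ with $\zeta(f_0)=0$ and $\mathcal{P}(\widetilde{\Phi}(f,\zeta(f)))=0$. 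Then $r(f)=\widetilde{\Phi}(f,\zeta(f))$ is a continuous retraction of $U$ onto $U\cap\mathcal{P}^{-1}(0)$ fixing $f_0$, exhibiting $\mathcal{P}^{-1}(0)$ as a neighborhood retract near $f_0$. An identical argument with a real period-dominating spray handles $(\Re\mathcal{P})^{-1}(0)$.

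The main obstacle is executing the implicit function step rigorously in the compact-open topology on $\Oscr_*(M,\Agot_*)$: one must upgrade the period-domination property, a priori a pointwise statement at $f_0$, to a continuous family of right-inverses over a full neighborhood of $f_0$, and verify that the resulting $\zeta(f)$ depends continuously on $f$ in the compact-open topology. This is precisely the content of the promised Lemma \ref{lem:contractible}, to be developed in the spirit of \cite{Forstneric2007,AlarconForstneric2014IM,AlarconForstnericLopez2015MZ} and crucially exploiting the finite-dimensionality of the period target that finite topological type affords. Once these local retractions are in hand, Hanner's theorem concludes the proof for all three spaces.
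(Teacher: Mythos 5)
Your overall strategy is genuinely different from the paper's. You aim to exhibit $\Oscr_{*,0}(M,\Agot_*)$ (and its real-period analogue) as a \emph{local neighborhood retract} of the ANR $\Oscr_*(M,\Agot_*)$ and then invoke Hanner's theorem, whereas the paper verifies the Dugundji--Lefschetz extension criterion directly, using Lemma \ref{lem:contractible} (restriction-compatible contractible neighborhoods of $f|_{L_m}$ in $\Ascr_{*,0}(L_m,\Agot_*)$ over an exhaustion by compact bordered pieces) together with the parametric h-principles of Theorems \ref{th:PHP2} and \ref{th:PHP3} and the parametric Oka property to extend maps simplex by simplex. Your identifications of the three spaces with $\mathcal{P}^{-1}(0)$ and $(\Re\mathcal{P})^{-1}(0)$ times Euclidean factors agree with the paper's, and for the null quadric your route is in principle viable: since $\Agot_*$ admits finitely many complete holomorphic vector fields spanning its tangent bundle, the flow-composition formula $\widetilde\Phi(f,\zeta)(x)=\phi^1_{\zeta_1h_1(x)}\circ\cdots\circ\phi^N_{\zeta_Nh_N(x)}(f(x))$ with $h_j\in\Oscr(M)$ does produce a spray defined on all of $M$, depending continuously on $f$, and period domination persists on a compact-open neighborhood of $f_0$ because the periods only see $f$ on the compact Runge support of the homology basis.

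The gap is that this central step is not actually carried out, and the results you lean on do not supply it. \cite[Lemma 5.1]{AlarconForstneric2014IM} constructs period-dominating sprays over compact bordered surfaces (or admissible compact sets), not over the open surface $M$; and the paper's Lemma \ref{lem:contractible} is emphatically \emph{not} ``precisely the content'' you need: it gives graph-type contractible neighborhoods of restrictions $f|_{L_m}$ over compact pieces, compatible under restriction, and never produces a retraction of a neighborhood in $\Oscr_*(M,\Agot_*)$ onto the period-zero locus, nor a spray over $M$. To close the gap you must build the global spray yourself (complete spanning fields on $\Agot_*$ plus Runge approximation of the coefficient functions $h_j$ on the support of the homology basis), note that global nondegeneracy of a holomorphic $f$ forces nondegeneracy along each basis curve (identity principle), and run the implicit function theorem solving the period equation in a fixed complement of the kernel so that \emph{uniqueness} of the small solution gives $\zeta(f)=0$ whenever $\mathcal{P}(f)=0$; your statement that $r$ ``fixes $f_0$'' is not enough --- a retraction must fix all of $U\cap\mathcal{P}^{-1}(0)$ pointwise, and one must also check that $r(f)$ stays nondegenerate (small $\zeta$, uniformly over $U$). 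Be aware, too, that this shortcut exploits the completeness of the spanning fields, which is special to $\Agot_*$ (and to $A_*$ flexible); the paper's exhaustion-based argument is designed to also cover arbitrary Oka $A_*$ as in Remark \ref{rem:general}, where only local flows are available and sprays exist only over compact pieces.
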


Before proving the theorem, we state and prove the following corollary.

\begin{corollary}  \label{cor:strong}
Let $M$ be a connected open Riemann surface of finite topological type.  Let $n\geq 3$.  The six maps in the diagram \eqref{eq:diagram2} are homotopy equivalences.  Moreover, the inclusion $\iota$ and the injections
\[ \psi: \{u\in\Mgot_*(M,\R^n):u(p)=0\}\to \Oscr_*(M,\Agot_*), \]
\[ \phi: \{F\in\Ngot_*(M,\C^n):F(p)=0\}\to \Oscr_*(M,\Agot_*), \]
where $p\in M$ is any chosen base point, are inclusions of strong deformation retracts. 
\end{corollary}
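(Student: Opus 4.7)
The plan is to combine Theorem \ref{th:anr} with the ANR/cofibration machinery already exploited for $j_2$ at the beginning of this section. First, I would verify that every space appearing in \eqref{eq:diagram2} is an ANR: the three spaces on the upper right were shown to be ANRs in the discussion preceding Theorem \ref{th:anr}, while the three on the left are ANRs by Theorem \ref{th:anr} itself. Each of the six maps is already known to be a weak homotopy equivalence by earlier results (Theorems \ref{th:WHE1} and \ref{th:WHE2}, Corollary \ref{cor:WHEforOstar}, Grauert's theorem for $j_2$, and continuity of the Hilbert transform for the leftmost vertical map). Since any ANR has the homotopy type of a CW complex, Whitehead's theorem promotes each weak homotopy equivalence to a genuine homotopy equivalence, yielding the first assertion of the corollary.

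For the inclusion $\iota \colon \Re\Ngot_*(M,\C^n) \hookrightarrow \Mgot_*(M,\R^n)$, I would observe that the source is cut out of the target by the vanishing of the flux homomorphism $\Flux$. Since $M$ has finite topological type, $H_1(M;\Z)\cong\Z^\ell$ for some finite $\ell$, so $\Flux$ is determined by finitely many continuous period integrals and $\iota$ is the inclusion of a closed subspace. Being a closed inclusion of ANRs it is a Hurewicz cofibration, and being a cofibration that is also a homotopy equivalence, $\iota$ is the inclusion of a strong deformation retract, by the very same argument used for $j_2$.

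The restricted maps $\psi_0:=\psi|_{\{u(p)=0\}}$ and $\phi_0:=\phi|_{\{F(p)=0\}}$ require slightly more work, but the strategy is identical. I would show that each is a closed topological embedding of an ANR into an ANR and is also a homotopy equivalence, and then invoke the cofibration argument once more. For $\psi_0$: injectivity follows because $u$ is recovered from $f=2\di u/\theta$ and the normalization $u(p)=0$ by integration $u(x)=\int_p^x\Re(f\theta)$; this inverse is continuous in the compact-open topology, so $\psi_0$ is a topological embedding. Its image consists of those $f\in\Oscr_*(M,\Agot_*)$ whose associated $1$-form $f\theta$ has vanishing real periods — a closed condition cut out by finitely many continuous functionals. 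The source $\{u:u(p)=0\}$ is a retract of $\Mgot_*(M,\R^n)$ via the continuous retraction $u\mapsto u-u(p)$, and hence an ANR; in fact the map $u\mapsto(u-u(p),u(p))$ yields a homeomorphism $\Mgot_*(M,\R^n)\cong\{u:u(p)=0\}\times\R^n$ under which $\psi$ factors as $\psi_0\circ\mathrm{pr}_1$. Contractibility of $\R^n$ together with the homotopy equivalence of $\psi$ then force $\psi_0$ to be a homotopy equivalence. The argument for $\phi_0$ is entirely analogous, with real periods replaced by complex periods. I expect the main obstacle to be purely bookkeeping: the ANR/cofibration machinery is routine once Theorem \ref{th:anr} is in hand, and the substantive input is finite topological type of $M$, which keeps the period functionals finite in number, continuous, and with closed zero-set, and which ensures that the integration inverses to $\psi_0$ and $\phi_0$ are continuous in the compact-open topology.
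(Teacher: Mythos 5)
Your proposal is correct and follows essentially the same route as the paper: all spaces involved are ANRs (the left-hand ones by Theorem \ref{th:anr}, the normalized subspaces via the product splitting $u\mapsto(u-u(p),u(p))$), weak homotopy equivalences between ANRs are homotopy equivalences by Whitehead, and a closed embedding of ANRs that is a homotopy equivalence is a cofibration and hence the inclusion of a strong deformation retract. The paper leaves the closedness of the images and the embedding/homotopy-equivalence properties of the restricted $\psi$ and $\phi$ largely as observations, which you spell out in a consistent way.
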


\begin{proof}
By the above, all that remains to observe is that $\iota$, $\psi$ and $\phi$ have closed images, and that the subspaces $\{u\in\Mgot_*(M,\R^n):u(p)=0\}$ and $\{F\in\Ngot_*(M,\C^n):F(p)=0\}$ of $\Mgot_*(M,\R^n)$ and $\Ngot_*(M,\C^n)$, respectively, are ANRs.  The obvious homeomorphism $\Ngot_*(M,\C^n)\to \{F\in\Ngot_*(M,\C^n):F(p)=0\}\times\C^n$, $F\mapsto (F-F(0), F(0))$, with inverse $(G,a)\mapsto G+a$, shows that $\{F\in\Ngot_*(M,\C^n):F(p)=0\}$ is an ANR since $\Ngot_*(M,\C^n)$ is (and conversely).  Similarly, $\{u\in\Mgot_*(M,\R^n):u(p)=0\}$ is an ANR.
\end{proof}

\begin{remark}  \label{rem:general}
Analogous results hold for the maps 
\begin{equation*}
\xymatrix{
	\Igot_{A,*}(M,\C^n)  \ar[r]^\phi &  \Oscr_*(M,A_*)  \ar@{^{(}->}[r] & \Oscr(M,A_*) \ar@{^{(}->}[r] & \Cscr(M,A_*)
}
\end{equation*}
whenever $A$ is a closed irreducible conical subvariety of $\C^n$ such that $A_*=A\setminus \{0\}$ is an Oka manifold. 
%
%
This holds in particular when $A=\C^n$ for any $n\in\N$ 
and gives a homotopy equivalence $\Igot(M,\C^n) \longrightarrow \Cscr(M,\S^{2n-1})$
in Theorem \ref{th:immersions}.
%
%
\end{remark}

Before proceeding, we introduce some notation. Let $A\subset \C^n$ be  a closed conical complex subvariety of $\C^n$ such that $A_*=A\setminus \{0\}$ is smooth.  Let $M$ be an open Riemann surface.  We shall write $\Oscr_{*,0}(M,A_*)$ for the space of nondegenerate holomorphic maps  $M\to\Agot_*$ (in the sense of Definition \ref{def:nondegenerate}) with vanishing periods. If $M$ is a compact bordered Riemann surface, we shall use the notation
\[
	\Ascr_{*,0}(M,A_*)  \subset \Ascr_{*}(M,A_*) \subset  \Ascr(M,A_*),
\]
where $\Ascr(M,A_*)$ is the space of continuous maps $M\to A_*$ that are holomorphic in the interior of $M$,
$\Ascr_{*}(M,A_*)$ is the subspace of nondegenerate maps, and $\Ascr_{*,0}(M,A_*)$ 
is the subspace of nondegenerate maps with vanishing periods.  

When $M$ is a compact $\Cscr^1$-bordered Riemann surface, $\Ascr_{*,0}(M,A_*)$ is a complex Banach manifold;
more precisely, it is a locally closed complex Banach submanifold of finite codimension in the complex Banach manifold 
$\Ascr(M,A_*)$ \cite[Theorem 2.3(b)]{AlarconForstneric2014IM}.  
The analogous theorem for vanishing real periods for 
$A=\Agot$ is \cite[Theorem 3.1(b)]{AlarconForstnericLopez2015MZ}.

To prove Theorem \ref{th:anr}, we need the following lemma.  

\begin{lemma}  \label{lem:contractible}
Assume that $A$ is a closed irreducible conical subvariety of $\C^n$ such that $A_*=A\setminus \{0\}$ is smooth.
Let $M$ be an open Riemann surface and let  $\rho:M\to[0,\infty)$ be a smooth exhaustion. 
Let $L_0\supset L_1\supset \cdots \supset K$ be compact smoothly bordered subsurfaces of $M$ of the form 
$\rho^{-1}([0,c])$, such that $K$ contains all the critical points of $\rho$.  Let $f\in\Ascr_{*,0}(M,A_*)$ and let 
$W$ be a neighborhood of $f|_K$ in $\Ascr_{*,0}(K,A_*)$.  Then there are contractible neighborhoods $C_m$ of 
$f|_{L_m}$ in $\Ascr_{*,0}(L_m,A_*)$ such that  $C_m|_{L_{m+1}}\subset C_{m+1}$ and $C_m|_K\subset W$ 
for all $m\geq 0$. 
\end{lemma}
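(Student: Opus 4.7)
The plan is to exploit the complex Banach manifold structure of each space $\Ascr_{*,0}(L_m, A_*)$. By \cite[Theorem 2.3(b)]{AlarconForstneric2014IM} (or \cite[Theorem 3.1(b)]{AlarconForstnericLopez2015MZ} in the vanishing real periods variant), each such space is a locally closed complex Banach submanifold of $\Ascr(L_m, A_*)$; this applies because the hypothesis that $K$ contains all critical points of $\rho$ guarantees that every $L_m$ is a compact smoothly bordered Riemann surface. Consequently, $f|_{L_m}$ has a fundamental system of contractible open neighborhoods in $\Ascr_{*,0}(L_m, A_*)$, namely preimages of open balls under Banach charts.

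First I would fix, for every $m \ge 0$, a Banach chart $\phi_m \colon U_m \to E_m$ at $f|_{L_m}$ with $\phi_m(f|_{L_m}) = 0$ and $\phi_m(U_m)$ an open ball. I may further normalize $\phi_m$ so that its derivative at $f|_{L_m}$ is the identity once tangent spaces are identified with $E_m$. Using continuity of the restriction maps $r_m^K \colon \Ascr_{*,0}(L_m,A_*) \to \Ascr_{*,0}(K,A_*)$ and $r_m \colon \Ascr_{*,0}(L_m,A_*) \to \Ascr_{*,0}(L_{m+1},A_*)$ at $f|_{L_m}$ (with $r_m(f|_{L_m}) = f|_{L_{m+1}} \in U_{m+1}$), I shrink each $U_m$ so that $U_m \subset (r_m^K)^{-1}(W)$ and $r_m(U_m) \subset U_{m+1}$. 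I will take $C_m := \phi_m^{-1}(B_{E_m}(0, \epsilon_m))$ for radii $\epsilon_m > 0$ to be determined; each such $C_m$ is open, contractible (homeomorphic to an open ball in a Banach space), and contained in $(r_m^K)^{-1}(W)$.

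Writing $R_m := \phi_{m+1} \circ r_m \circ \phi_m^{-1}$, which is smooth with $R_m(0) = 0$, the remaining requirement $r_m(C_m) \subset C_{m+1}$ translates to
\[
    R_m(B_{E_m}(0, \epsilon_m)) \subset B_{E_{m+1}}(0, \epsilon_{m+1}) \quad \text{for every } m \ge 0.
\]
With the chart normalization, $dR_m(0)$ is the linear restriction of tangent vectors from $L_m$ to $L_{m+1}$, a contraction in the natural sup norms, so $\|dR_m(0)\| \le 1$. By smoothness of $R_m$, for any summable positive sequence $(\eta_m)$ (e.g.\ $\eta_m := 2^{-m-2}$) there exist radii $\delta_m > 0$ with $\|R_m(x)\| \le (1+\eta_m)\|x\|$ on $B_{E_m}(0, \delta_m)$.

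The hardest step will be closing this infinite cascade simultaneously. I would set $\epsilon_m := \epsilon_0 \prod_{k=0}^{m-1}(1+\eta_k)$, which is a bounded sequence since $\prod_{k\ge 0}(1+\eta_k) < \infty$. Provided $\epsilon_0 > 0$ is chosen small enough that each $\epsilon_m$ lies below both $\delta_m$ and the radius of $\phi_m(U_m)$ — arrangeable by first picking the charts with uniformly sized images via preliminary scalar rescalings and then taking $\epsilon_0$ correspondingly small — the desired inclusions follow: $R_m(B_{E_m}(0, \epsilon_m)) \subset B_{E_{m+1}}(0,(1+\eta_m)\epsilon_m) = B_{E_{m+1}}(0, \epsilon_{m+1})$. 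This yields the required contractible neighborhoods $C_m$ with $C_m|_{L_{m+1}} \subset C_{m+1}$ and $C_m|_K \subset W$ for all $m \ge 0$.
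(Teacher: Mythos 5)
Your reduction to the abstract Banach manifold structure of each $\Ascr_{*,0}(L_m,A_*)$ does not close the infinite cascade, and this is a genuine gap rather than a technicality. For each $m$ you get, from smoothness of the transition map $R_m=\phi_{m+1}\circ r_m\circ\phi_m^{-1}$ at $0$, a radius $\delta_m>0$ on which $\|R_m(x)\|\le(1+\eta_m)\|x\|$; but your radii $\epsilon_m=\epsilon_0\prod_{k<m}(1+\eta_k)$ are bounded \emph{below} by $\epsilon_0$, so you need $\inf_m\delta_m>0$, i.e.\ uniform (in $m$) quantitative control of infinitely many independently chosen charts. Nothing in the abstract structure theorem provides this: the $\delta_m$ may shrink to $0$. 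The proposed repair --- rescaling the charts to have uniformly sized images and then choosing $\epsilon_0$ small --- is incompatible with the normalization you used to get $\|dR_m(0)\|\le 1$: rescaling $\phi_m$ by $\lambda_m$ multiplies $dR_m(0)$ by $\lambda_{m+1}/\lambda_m$, so you cannot simultaneously keep the derivative bound and uniform chart radii without exactly the kind of uniform control you are trying to establish. (There is also a milder circularity in shrinking $U_m$ to achieve $r_m(U_m)\subset U_{m+1}$ while later shrinking $U_{m+1}$, but that could be absorbed if the uniformity problem were solved.)

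The paper resolves precisely this point by \emph{not} treating the spaces $\Ascr_{*,0}(L_m,A_*)$ as abstract Banach manifolds: it opens up the proof of the structure theorem and builds a single period dominating spray $\Psi$ on a fixed domain $M_0\supset L_0$ with core $f$, giving one chart $\Phi$ that restricts coherently to every $L_m$. The period-vanishing solutions are then written as a graph $g\mapsto \psi_L(g)=g+\sum_j c_j(g|_{|\Bcal|})h_j$ over the kernel of the differential of the period map, where the correction coefficients $c_j$ depend only on $g|_{|\Bcal|}$ with $|\Bcal|\subset K$ fixed; hence $\psi_{L}(g|_L)=\psi_{L'}(g)|_L$, and since sup norms do not increase under restriction $L_m\supset L_{m+1}$, a \emph{single} $\epsilon$ produces nested contractible graph neighborhoods $C_m$ with $C_m|_{L_{m+1}}\subset C_{m+1}$ and $C_m|_K\subset W$. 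If you want to salvage your approach, you would have to build your charts with exactly this kind of compatibility under restriction, which in effect reproduces the paper's spray construction.
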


\begin{proof}
We must examine the proof of \cite[Theorem 2.3(b)]{AlarconForstneric2014IM}
concerning the Banach manifold structure of $\Ascr_{*,0}(M,A_*)$ when $M$ is a 
compact bordered Riemann surface.

Let $\theta$ be a nowhere vanishing  holomorphic $1$-form on $M$.
Since $K$ contains all the critical points of $\rho$, there is a homology basis
$\Bcal=\{\gamma_i\}_{i=1,\ldots,l}$ of $H_1(M;\Z)$ whose support $|\Bcal| = \cup_{j=1}^l |\gamma_j|$ 
is contained in $K$ and is Runge in $M$. Let $\Pcal\colon  \Oscr(M,A_*) \to (\C^n)^l$ denote 
the period map associated to $\Bcal$ and $\theta$:
\[
	\Pcal(f) =\left(\int_{\gamma_j} f\theta \right)_{j=1,\ldots,l} \in (\C^n)^l,\quad f\in \Oscr(M,A_*).
\]

Fix a map $f\in \Oscr_{*,0}(M,A_*)$. 
Let $M_0$ be a compact smoothly bounded domain in $M$ (a sublevel set of $\rho$) 
with the same topology as $M$ and containing $L_0$.
By \cite[Lemma 5.1]{AlarconForstneric2014IM}, there exist an
integer $N\in\N$ and a dominating and period dominating holomorphic spray map 
$\Psi\colon M_0 \times \B^N \to A_*$ of class $\Ascr(M_0)$ with the core $\Psi(\cdotp,0)=f$. 
(More precisely, $\Psi$ is continuous and is holomorphic on $\mathring M_0 \times \B^N$.)
Consider the partial differential
\[
	\Theta_x = \di_\zeta \Psi(x,\zeta)\big|_{\zeta=0} \colon \C^N\lra T_{f(x)}A_*, \quad x\in M_0.
\]
The domination property of $\Psi$ means that $\Theta_x$ is surjective for each $x\in M_0$.
Hence, we have $M_0 \times \C^N= \ker \Theta \oplus E$  where $E$ is a complex vector subbundle 
of class $\Ascr(M_0)$ and rank $k=\dim_\C A_*$. 
(See \cite[proof of Theorem 1.2]{Forstneric2007}, where the reader can also find the relevant references.) 
Note that $E$, being a complex vector bundle over
a nonclosed Riemann surface, is trivial, $E\cong M_0\times \C^k$ as complex vector bundles
of class $\Ascr(M_0)$, so we simply identify $E$ with $M_0\times \C^k$ in the sequel.
We now restrict $\Psi$ to $E=M_0\times \C^k$ and denote it by the same letter.

By the implicit function theorem, there is $\epsilon_0\in (0,1)$ such that
$\Psi(x,\cdotp)\colon \epsilon_0\B^k \to A_*$ is an injective holomorphic map
for every $x\in M_0$, hence biholomorphic onto its image. It follows that the map
\begin{equation}\label{eq:Phi}
	\Phi\colon \Ascr(M_0,\epsilon_0\B^k) \to \Ascr(M_0,A_*),
	\quad \Phi(g)(x) := \Psi(x,g(x))\ \ (x\in M_0)
\end{equation}
takes $\Ascr(M_0,\epsilon_0\B^k)$ bijectively onto an open neighborhood of 
$f|_{M_0}=\Phi(0)$ in $\Ascr(M_0,A_*)$. Since $f$ is nondegenerate, we may decrease $\epsilon_0>0$ 
to ensure that the range of $\Phi$ is contained in the open 
subset $\Ascr_*(M_0,A_*)$ of $\Ascr(M_0,A_*)$ consisting of nondegenerate maps.

Consider now the subset 
\[
	\{g\in \Ascr(M_0,\epsilon_0\B^k) : \Pcal(\Phi(g))=0\}
\]  
of $\Ascr(M_0,\epsilon_0\B^k)$ which corresponds via $\Phi$ to maps $M_0\to A_*$ with vanishing periods.
By the period domination property of the spray $\Psi$ (and hence of the map $\Phi$ \eqref{eq:Phi}), 
the differential of the period map
\[	
	\Ascr(M_0,\epsilon_0\B^k) \ni g\longmapsto \Pcal(\Phi(g)) \in (\C^n)^l
\] 
at $g=0$ is surjective. Let us denote this differential by 
\begin{equation}\label{eq:D}
	D= d_0(\Pcal\circ \Phi) : \Ascr(M_0,\C^k)\lra (\C^n)^l.
\end{equation}
Pick $h_1,\ldots,h_{nl} \in \Ascr(M_0,\epsilon_0\B^k)$ such that the vectors
$D(h_1),\ldots,D(h_{nl})\in (\C^n)^l$ span $(\C^n)^l$. Note that the map $\Phi$ given by \eqref{eq:Phi}
also applies to continuous maps $g\colon L\to \epsilon_0\B^k$ defined on a subset $L\subset M_0$, and the 
period map $\Pcal(\Phi(g))$ is defined whenever the domain $L$ of $g$ contains the support
$|\Bcal|$ of the homology basis. Hence, the map $D$ \eqref{eq:D} is well defined on 
$\Cscr(L,\C^k)$ whenever $|\Bcal| \subset L \subset M_0$. Taking $L=|\Bcal|$, it follows that the Banach space  
$\Cscr(|\Bcal|,\C^k)$ decomposes as a direct sum of closed complex Banach subspaces
\[
	\Cscr(|\Bcal|,\C^k) = \left( \ker D|_{\Cscr(|\Bcal|,\C^k)}\right) \oplus 
	\span_\C \{h_1|_{|\Bcal|},\ldots,h_{nl}|_{|\Bcal|}\}
	= \Lambda \oplus H. 
\] 
By the implicit function theorem in Banach spaces, there is a number $\epsilon_1\in (0,\epsilon_0)$
such that for every $g\in \Lambda=\ker D|_{\Cscr(|\Bcal|,\C^k)}$ with $\Vert g\Vert_{0,|\Bcal|} < \epsilon_1$, 
there exists an element of the form 
\begin{equation}\label{eq:tildeg}
	\tilde g = g + \sum_{j=1}^{nl} c_j(g) h_j \in \Cscr(|\Bcal|,\epsilon_0\B^k) 
\end{equation}
satisfying the period vanishing equation
\begin{equation}\label{eq:periodzero}
	\Pcal(\Phi(\tilde g))=0.  
\end{equation}
Here, $c_j$ are smooth bounded complex functions on the set  
$\{g\in \Lambda : \Vert g\Vert_{0,|\Bcal|} < \epsilon_1\}$ furnished by the implicit function theorem.
Note that \eqref{eq:tildeg} is a local representation of the solution set of 
\eqref{eq:periodzero} as a graph over the linear subspace $\Lambda$ near the origin.

Recall that $h_1,\ldots, h_{nl}  \in \Ascr(M_0,\C^k)$. Hence, if $L$ is any smoothly bounded compact set
with $|\Bcal|\subset L \subset M_0$ and $g\in \Ascr(L,\epsilon_1\B^k)$, then the formula  \eqref{eq:tildeg} yields a map 
\[
	\tilde g = \psi_L(g) :=  g + \sum_{j=1}^{nl} c_j(g|_{|\Bcal|}) h_j|_{L}  \in \Ascr(L,\C^k)
\]
satisfying the period vanishing condition \eqref{eq:periodzero} provided that $\|\tilde g\|_{0,L}< \epsilon_0$
(so that $\Phi(\tilde g)$ is defined). 
Note that for any pair of compacts $L,L'$ with $|\Bcal| \subset L\subset L'\subset M_0$, we have
\begin{equation}\label{eq:restriction}
	\psi_{L}(g|_L) = \psi_{L'}(g)\big|_L\quad \text{for every}\ \ g\in \Ascr(L',\epsilon_1\B^k).
\end{equation}

Since the functions $c_j$ are bounded on a neighborhood of the origin in $\Lambda$, 
there is a number $\epsilon$ with $0<\epsilon <\epsilon_1$ such that  
\[
	V_0 := \bigl\{\psi_{M_0}(g) : g \in \Ascr(M_0,\epsilon\B^k),\ D(g)=0 \bigr\}
	\subset \Ascr(M_0,\epsilon_0\B^k).
\]
Note that $V_0$ is a neighborhood of the origin in the space of solutions of 
\eqref{eq:periodzero} over $M_0$. Furthermore, being a smooth graph over the convex set  
$\{g\in \Ascr(M_0,\epsilon\B^k) : D(g)=0\}$, $V_0$ is contractible. Similarly, for every $m\geq 0$, the set
\[
	W_m := \bigl\{\psi_{L_m}(g)  : g \in \Ascr(L_m,\epsilon\B^k),\ D(g)=0 \bigr\}   \subset \Ascr(L_m,\epsilon_0\B^k)
\]
is a contractible neighborhood of the origin in the space of solutions of \eqref{eq:periodzero} over $L_m$. 
Taking into account that for any $g\in \Ascr(L_m,\C^k)$ we have
$\|g\|_{L_{m+1}} \le \|g\|_{L_{m}}$ by the maximum principle, 
the formula \eqref{eq:restriction} shows that the restriction map associated to 
the inclusion $L_m\supset L_{m+1}$ maps $W_m$ into $W_{m+1}$ for every $m\geq 0$.
It follows that
\[
	C_m :=\{\Phi(\tilde g) : \tilde g \in W_m\}  \subset \Ascr_{*,0}(L_m,A_*)
\]
is a contractible neighborhood of $f|_{L_m}$ in $\Ascr_{*,0}(L_m,A_*)$, and the restriction map associated to 
$L_m\supset L_{m+1}$ takes $C_m$ into $C_{m+1}$ for every $m\geq 0$.
By choosing $\epsilon>0$ small enough, we can also ensure that the restriction map associated
to $L_m\supset K$ maps $C_m$ into a given neighborhood of $f|_{K}$ in $\Ascr_{*,0}(K,A_*)$.
\end{proof}

The analogous lemma for vanishing real periods is proved similarly.

\begin{proof}[Proof of Theorem \ref{th:anr}]
As noted in the proof of Corollary \ref{cor:strong}, $\Ngot_*(M,\C^n)$ is an ANR if and only if $\{F\in\Ngot_*(M,\C^n):F(p)=0\}$ is.  We identify the latter space with $X=\Oscr_{*,0}(M,\Agot_*)$ and prove that $X$ is an ANR.

Let $\mathscr U$ be an open cover of $X$.  We need to produce a refinement $\mathscr V$ of $\mathscr U$ such that if $P$ is a simplicial complex with a subcomplex $Q$ containing all the vertices of $P$, then every continuous map $\phi_0:Q\to X$ such that for each simplex $\sigma$ of $P$, $\phi_0(\sigma\cap Q)\subset V$ for some $V\in\mathscr V$, extends to a continuous map $\phi:P\to X$ such that for each simplex $\sigma$ of $P$, $\phi(\sigma)\subset U$ for some $U\in\mathscr U$.  This is the Dugundji-Lefschetz property, which is equivalent to $X$ being an ANR (see \cite[Theorem 5.2.1]{VanMill-book}).

Let $\rho:M\to[0,\infty)$ be a strictly subharmonic Morse exhaustion with finitely many critical points.  Let $f\in X$ and choose $U\in\mathscr U$ with $f\in U$.  There are compact $\Cscr^1$-bordered subsurfaces $L_0\supset K_0 \supset L_1 \supset K_1 \supset \cdots \supset K$ of $M$ of the form $\rho^{-1}([0,c])$, such that for every $m\geq 0$, $K_m\subset \mathring  L_m$, $K_m$ is $\mathscr O(M)$-convex, and $K$ contains all the critical points of $\rho$, so $M$ deformation-retracts onto $L_m$.  By choosing $K$ large enough, we may assume that $f|_K$ has a neighborhood $W$ in $\Ascr_{*,0}(K,\Agot_*)$ such that if $g\in X$ and $g|_K\in W$, then $g\in U$.

By Lemma \ref{lem:contractible}, there is a contractible neighborhood $C_m$ of $f|_{L_m}$ in $\Ascr_{*,0}(L_m,\Agot_*)$ such that $C_m|_{L_{m+1}}\subset C_{m+1}$ and $C_m|_K\subset W$ for all $m\geq 0$.  Let $V_m$ be the neighborhood of $f$ in $X$ consisting of all $g\in X$ with $g|_{L_m}\in C_m$.  Then $V_0\subset V_1\subset\cdots\subset U$.  Let the refinement $\mathscr V$ of $\mathscr U$ consist of all such open sets $V_0$, one for each $f\in X$.

Let $P$, $Q$, and $\phi_0$ be as above, so for each simplex $\sigma$ of $P$, $\phi_0(\sigma\cap Q)\subset V_0^\sigma$ for some $V_0^\sigma\in\mathscr V$ associated to some $f\in X$ as above.  Adorn the other associated sets $K_m$, $L_m$, $C_m$, and $V_m$ with a superscript $\sigma$ as well.  Let $P_m=P^m\cup Q$, where $P^m$ is the $m$-skeleton of $P$.  We shall construct continuous maps $\phi_m:P_m\to X$, $m\geq 1$, such that $\phi_m|_{P_{m-1}}=\phi_{m-1}$ and for every simplex $\sigma$ of $P$, $\phi_m(\sigma\cap P_m)\subset V_m^\sigma$.  The map $\phi:P\to X$ with $\phi|_{P_m}=\phi_m$ for each $m\geq 0$ will then be continuous with respect to the Whitehead topology on $P$, and for each simplex $\sigma$ of $P$, $\phi(\sigma)\subset U$ for some $U\in\mathscr U$.

Suppose $\phi_m$, $m\geq 0$, is given.  Let $\sigma$ be an $(m+1)$-simplex of $P$ but not of $Q$.  The interior $\sigma\setminus\partial\sigma$ of $\sigma$ does not intersect $P_m$.  Also, the interiors of distinct $(m+1)$-simplices do not intersect.  We need to suitably extend $\phi_m|_{\partial\sigma}$ to $\sigma$.  We have $\phi_m(\partial\sigma)\subset \phi_m(\sigma\cap P_m)\subset V_m^\sigma$.  Since $C_m^\sigma$ is contractible, the composition of $\phi_m:\partial\sigma\to V_m^\sigma$ and the restriction map $V_m^\sigma \to C_m^\sigma$ extends by the cofibration $\partial\sigma\hookrightarrow\sigma$ to a continuous map $\alpha:\sigma\to C_m^\sigma$.

We may view $\alpha$ as a continuous map $M\times\partial\sigma \cup L_m^\sigma\times\sigma \to \Agot_*$.  Now $L_m^\sigma$ is a deformation retract of $M$, so $M\times\partial\sigma \cup L_m^\sigma\times\sigma$ is a deformation retract of $M\times\sigma$.  Thus $\alpha$ extends to a continuous map $\alpha:M\times\sigma\to \Agot_*$.  Note that $\alpha(\cdot,t)\in X$ for all $t\in\partial\sigma$, and $\alpha(\cdot,t)|_{L_m^\sigma}\in\Ascr_{*,0}(L_m^\sigma,\Agot_*)$ for all $t\in\sigma$.  

By combining the parametric h-principles with approximation in Theorems \ref{th:PHP2} and \ref{th:PHP3} and the parametric Oka property with approximation of the Oka manifold $\Agot_*$, $\alpha$ may be deformed to a continuous map $\beta:X\times\sigma\to \Agot_*$ such that $\beta(\cdot,t)=\alpha(\cdot,t)$ for all $t\in\partial\sigma$, $\beta(\cdot,t)\in X$ for all $t\in\sigma$, and $\beta$ uniformly approximates $\alpha$ as closely as desired on $K_m^\sigma\times\sigma$.  If the approximation is close enough, then defining $\phi_{m+1}(t)=\beta(\cdot,t)$ for $t\in\sigma$ gives a continuous extension of $\phi_m|_{\partial\sigma}$ to $\sigma$ with $\phi_{m+1}(\sigma)\subset V_{m+1}^\sigma$.

The proof that $\Mgot_*(M,\R^n)$ is an ANR is analogous.  Finally, $\Re\Ngot_*(M,\C^n)$ is an ANR because by continuity of the Hilbert transform, the map $\Ngot_*(M,\C^n)\to\Re\Ngot_*(M,\C^n)\times\R^n$, $F\mapsto(\Re F, \Im F(p))$, where $p\in M$ is any chosen base point, is a homeomorphism, so $\Re\Ngot_*(M,\C^n)$ is a retract of $\Ngot_*(M,\C^n)$.
\end{proof}

Once again, let $M$ be an open Riemann surface and $n\geq 3$.  Returning to the first paragraph of the introduction, we have seen that the spaces $\Mgot_*(M,\R^n)$ and $\Ngot_*(M,\C^n)$, as well as the other four spaces in the diagram \eqref{eq:diagram2}, have the same \lq\lq rough shape\rq\rq.  More precisely, they all have the same weak homotopy type, and when $M$ has finite topological type even the same strong homotopy type, as the space $\mathfrak H$ of continuous maps from a wedge of circles to $\Agot_*^{n-1}$.  The real part projection $\C^n\to\R^n$ gives $\Agot_*^{n-1}$ the structure of a fibre bundle, whose fibre is the $(n-2)$-sphere, 
over $\R^n\setminus\{0\}$, which is homotopy equivalent to the $(n-1)$-sphere.  
The structure of $\mathfrak H$ can therefore be understood in terms of spheres and their loop spaces.  In particular, the homotopy groups of $\mathfrak H$ can be calculated in terms of homotopy groups of spheres.  We leave this for another day.


\subsection*{Acknowledgements}
F.\ Forstneri\v c is supported in part  by research program P1-0291 and grants J1-5432 and J1-7256 from ARRS, Republic of Slovenia.  F.~L\'arusson is supported in part by Australian Research Council grant DP150103442.  

A part of the work on this paper was done while F.\ Forstneri\v c was visiting the School of Mathematical Sciences at the University of Adelaide in January and February 2016.  He would like to thank the University of Adelaide for hospitality and the Australian Research Council for financial support.

The authors wish to  thank Antonio Alarc\'on for helpful discussions.  


{\bibliographystyle{abbrv}
\bibliography{bibFL}}

\vskip 5mm

\noindent Franc Forstneri\v c

\noindent Faculty of Mathematics and Physics, University of Ljubljana, Jadranska 19, SI--1000 Ljubljana, Slovenia

\noindent Institute of Mathematics, Physics and Mechanics, Jadranska 19, SI--1000 Ljubljana, Slovenia

\noindent e-mail: {\tt franc.forstneric@fmf.uni-lj.si}

\vskip 0.5cm

\noindent Finnur L\'arusson

\noindent School of Mathematical Sciences, University of Adelaide, Adelaide SA 5005, Australia

\noindent e-mail:  {\tt finnur.larusson@adelaide.edu.au}

\end{document}